\newtheorem{thm}[equation]{Theorem}
\let\c@subsubsection\c@equation
\newtheorem{lem}[equation]{Lemma}
\newtheorem{prop}[equation]{Proposition}
\newtheorem{cor}[equation]{Corollary}
\theoremstyle{remark}
\newtheorem{rmk}[equation]{Remark}
\theoremstyle{definition}
\newtheorem{defi}[equation]{Definition}
\newtheorem{exam}[equation]{Example}
\newcommand{\Hom}{\mathrm{Hom}}
\newcommand{\Gm}{\mathbb G _{m}}
\newcommand{\spec}[1]{\mathrm{Spec}(#1)}
\newcommand{\generators}{\mathcal G}
\newcommand{\generatorstot}{\mathcal K}
\newcommand{\homotcat}{\mathbf{Ho}}
\newcommand{\hr}{HR}
\newcommand{\finfield}{\mathbb F}
\newcommand{\hrfin}{H\mathbb \finfield}
\newcommand{\sphere}{\mathbf 1}
\newcommand{\ratMoore}{\sphere _{\mathbb Q}}
\newcommand{\hocolim}{\mathrm{hocolim}}
\newcommand{\beilinson}{\mathbf{H}_{\mathfrak B}}
\newcommand{\unsmot}{\mathcal M}
\newcommand{\susp}[1]{\Sigma ^{#1}}
\newcommand{\Tspectra}{Spt(\mathcal M)}
\newcommand{\stablehomotopy}{\mathcal{SH}}
\newcommand{\Tspectranbirat}[1]{B_{#1}\Tspectra}
\newcommand{\stablehomotopynbirat}[1]{\stablehomotopy(B_{#1})}
\newcommand{\Tspectranwbirat}[1]{WB_{#1}\Tspectra}
\newcommand{\stablehomotopynwbirat}[1]{\stablehomotopy(WB_{#1})}
\newcommand{\neffstablehomotopy}[1]{\Sigma _{T}^{#1}\stablehomotopy^{\mathit{eff}}}
\newcommand{\northogonal}[1]{\stablehomotopy ^{\perp}(#1)}
\newcommand{\stablehomotopyeff}{\stablehomotopy^{\mathit{eff}}}
\numberwithin{equation}{subsection}
\begin{document}


\title{Motivic Birational Covers and Finite Filtrations on Chow Groups}


\author{Pablo Pelaez}
\address{Instituto de Matem\'aticas, Ciudad Universitaria, UNAM, DF 04510, M\'exico}
\email{pablo.pelaez@im.unam.mx}


\subjclass[2010]{Primary 14C25, 14C35, 14F42, 19E15; Secondary 18G55, 55P42}

\keywords{Bloch-Beilinson-Murre filtration, Chow Groups, Filtration on the Chow Groups,
				Filtration on Motivic Cohomology,
				Localizing Subcategories, Motivic Cohomology, Motivic Homotopy Theory,
				Orthogonal Subcategories}


\begin{abstract}
	We introduce a functorial tower of localizing subcategories 
	in the Morel-Voevodsky motivic stable homotopy category.  We
	study the tower for the spectrum $HG$ representing motivic cohomology with coefficients
	in an abelian group $G$.  When $G=\mathbb Z /\ell$ is a field, we show that the tower induces
	an interesting finite filtration on the Chow groups of smooth schemes over
	a perfect field.  For $G=\mathbb Z$ or $G=\mathbb Q$, we show  that the tower induces
	an interesting finite filtration on the motivic cohomology groups of smooth schemes over
	an algebraically closed field.  With rational coefficients, this
	finite filtration satisfies several of the properties
	of the still conjectural Bloch-Beilinson-Murre filtration.
\end{abstract}

\thanks{Research partially supported by DGAPA-UNAM grant IA100814.}

\maketitle


\section{Introduction}  \label{sec.introd}

The main goal of this paper is to construct an interesting finite filtration on the Chow groups
(with coefficients in a finite field $\mathbb Z/\ell$) of algebraic cycles modulo rational equivalence, and
as well an interesting finite filtration on the motivic cohomology groups (with integral or rational coefficients).
Considering rational coefficients, we obtain a finite filtration
which satisfies several of the properties of the still conjectural Bloch-Beilinson-Murre
filtration \cite{MR923131}, \cite{MR558224}, \cite{MR1225267}.

Our approach can be sketched quickly as follows.
For a smooth scheme $Y$ of finite type over a perfect field $k$, the Chow groups can be computed
in Voevodsky's triangulated category of motives \cite{MR1883180}.  Thus, a standard adjointness argument
implies that the Chow groups can be computed as well
in the Morel-Voevodsky motivic stable homotopy category $\stablehomotopy$: 
\[CH^{q}(Y)_{R}\cong \Hom _{\stablehomotopy}(\susp{-q}(\Gm^{-q} \wedge Y_{+}), \hr);\] 
where $\hr$ represents in $\stablehomotopy$
motivic cohomology with $R$-coefficients.  Since $\stablehomotopy$ is a triangulated category,
it is possible to construct the filtration by considering a tower in $\stablehomotopy$
(see \S \ref{sec.biratHZ}):
	\[	\cdots \rightarrow wb^{c} _{-3}(\hr) \rightarrow wb^{c} _{-2}(\hr) \rightarrow wb^{c}_{-1}(\hr) 
		\rightarrow \hr \]
and defining the $p$-component of the filtration $F^{p}CH^{q}(Y)$ to be the image of the induced map:
	\[ \xymatrix@C=0.8pt{\Hom _{\stablehomotopy}(\susp{-q} (\Gm^{-q}\wedge Y_{+}), 
		wb^{c}_{-p}\hr) \ar[rr] &&
		 \Hom _{\stablehomotopy}(\susp{-q}(\Gm^{-q}\wedge Y_{+}), \hr)\cong CH^{q}(Y)_{R}}
	 \]
With this definition, it is not at all obvious that the filtration $F^{\bullet}CH^{q}(Y)$ is finite.
This is proved in \ref{thm.filt.motcoh.finite}.  It is also interesting to observe that the filtration is defined (and is finite)
for any coefficient ring $R$ (not just the rationals) and for any smooth $k$-scheme of finite type
(not necessarily projective).

Now, we describe the contents of the paper.  In \S \ref{sec.mainres}, the main results are stated and proved using the constructions
of the remaining sections.  In \S \ref{subsec.defandnots}-\ref{subsec.Quillenmodcats},
we fix the notation and introduce the basic definitions that will be used in the
rest of the paper.  In \S \ref{sec.locsubcats}, we prove some general results for localizing and orthogonal subcategories
of a triangulated category which is the homotopy category of a Quillen model category,
see \ref{thm.preslocsub}, \ref{thm.presorth}, \ref{thm.perpcompgen}, \ref{cor.perp.2prestns}.
In \S \ref{sec.birational}, we introduce the weakly birational covers and the weakly birational tower which will
be used to construct the filtrations we are interested in, see \ref{eq.birtower}, \ref{def.birat.cover},
\ref{thm.3.1.octahedral-axiom}, \ref{thm.birspecseq}.
In \S \ref{Q.coeffs}, 
we study the weakly birational tower
with rational coefficients and show that in this case, all the components of the tower are motives and the corresponding
maps in the tower are as well maps of motives, see \ref{thm.birtowQloc}.  
We also prove  some auxiliary results of independent interest
for the slice filtration with rational coefficients,
see \ref{lem.Qloccompslice}, \ref{cor.slice.HBmod}, \ref{rat.orth.motive}.
Finally, in \S \ref{sec.biratHZ}, we study the weakly birational
tower for motivic cohomology and describe the main properties of the induced filtration,
see \ref{thm.motoperspecseq}, \ref{thm.hzcoversnotriv}, \ref{thm.filt.motcohY},
\ref{thm.filt.motcoh.finite}.

\subsection{Main Results}  \label{sec.mainres}

We will consider a base scheme of the form $X=\spec k$, where
$k$ is a perfect field.  
We will write $\hrfin$ (resp. $\hrfin ^{p,q}$) for the spectrum representing in $\stablehomotopy$
motivic cohomology with coefficients in a finite field $\finfield =\mathbb Z/\ell$, where $\ell \neq \mathrm{char} (k)$;
(resp. $\susp{p-q}(\Gm ^{q} \wedge \hrfin)$).
Let $\theta _{-n\ast}^{\hrfin}:wb^{c}_{-n}\hrfin \rightarrow \hrfin$ be the universal map defined in \eqref{motbirtower}.

\begin{thm}  \label{mainthm1}
Let $Y\in Sm_{X}$, and $q\geq 0$.  Then,
there exists a non-trivial finite decreasing filtration
$F^{\bullet}$ on $CH^{q}(Y)_{\finfield}$ where the $n$-component is given by the image of 
$\theta _{-n\ast}^{\hrfin}$:
\[ \xymatrix{\Hom _{\stablehomotopy}(\susp{-q}(\Gm ^{-q}\wedge Y_{+}), wb^{c}_{-n}\hrfin) 
\ar[r]^-{\theta _{-n\ast}^{\hrfin}} &
\Hom _{\stablehomotopy}(\susp{-q}(\Gm ^{-q}\wedge Y_{+}), \hrfin)=CH^{q}(Y)_{\finfield}.} \]
In addition, the filtration $F^{\bullet}$ is functorial in $Y$ with respect to morphisms in $Sm_{X}$ and
satisfies:
\begin{enumerate}
		\item  $F^{0}CH^{q}(Y)_{\finfield}=CH^{q}(Y)_{\finfield}$,
		\item  $F^{q+1}CH^{q}(Y)_{\finfield}=0$.
	\end{enumerate}
\end{thm}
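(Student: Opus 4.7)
The plan is to verify the asserted properties one by one, directly from the definition $F^n := \mathrm{Im}(\theta_{-n\ast}^{\hrfin})$ and the structural results on the weakly birational tower established in \S \ref{sec.birational} and \S \ref{sec.biratHZ}. Functoriality in $Y$ is formal: a morphism $f\colon Y \to Y'$ in $Sm_X$ induces $\susp{-q}(\Gm^{-q}\wedge f_+)^*$, which commutes with postcomposition by $\theta_{-n\ast}^{\hrfin}$, so $f^*$ sends $F^n CH^q(Y')_{\finfield}$ into $F^n CH^q(Y)_{\finfield}$. The filtration is decreasing because the structure map $wb^c_{-(n+1)}\hrfin \to wb^c_{-n}\hrfin$ factors $\theta_{-(n+1)\ast}^{\hrfin}$ through $\theta_{-n\ast}^{\hrfin}$, so the images nest. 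Property (1) reduces to the convention built into \eqref{motbirtower} that $\theta_{0\ast}^{\hrfin}$ is the identity on $\hrfin$, whence its image on any $\Hom$-group is the whole group.

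The substantive content is the finite vanishing (2), $F^{q+1}CH^q(Y)_{\finfield} = 0$. The strategy is to analyze the successive layers of the tower via the distinguished triangles provided by \ref{def.birat.cover} and \ref{thm.3.1.octahedral-axiom}, and to apply $\Hom_{\stablehomotopy}(\susp{-q}(\Gm^{-q}\wedge Y_{+}), -)$ throughout. This turns each layer into a long exact sequence of motivic cohomology groups of auxiliary smooth $k$-schemes, with the bidegree shifted according to the level $n$ of the tower. The $\Gm^{-q}$-twist on the source combines with these shifts so that the contribution at level $n$ sits in a motivic cohomology group of weight $q - n$ (up to the intrinsic shifts produced by the cover). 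Since motivic cohomology of smooth $k$-schemes with coefficients in $\finfield$ vanishes in negative weights, the contributions from levels $n \geq q+1$ are all zero. Assembling these vanishings through the long exact sequences---or equivalently invoking the spectral sequence of \ref{thm.birspecseq}---forces the image of $\theta_{-(q+1)\ast}^{\hrfin}$ to be zero in $CH^q(Y)_{\finfield}$.

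I expect the main obstacle to be matching the weight conventions built into the weakly birational tower against the $\Gm^{-q}$-twist on the source, so that the negative-weight vanishing of motivic cohomology activates precisely at level $n = q+1$ and not earlier. This requires careful bookkeeping based on the cover description in \ref{def.birat.cover} and the compatibility of the tower with $\Gm$-suspensions; the use of finite coefficients $\finfield$ also enters here to rule out troublesome torsion contributions in the tower's layers. Once (1) and (2) are secured, non-triviality of the filtration follows from the non-triviality of the covers themselves, established in \ref{thm.hzcoversnotriv}: for a suitable $Y$ some intermediate $F^n CH^q(Y)_{\finfield}$ is strictly between the two endpoints, so the filtration genuinely stratifies the Chow group rather than collapsing to a two-step filtration.
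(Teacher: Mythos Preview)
Your handling of functoriality, the decreasing property, and (1) is fine and matches the paper (with the caveat that $\theta_0^{\hrfin}$ being an isomorphism is \ref{prop.hzgeq0birat}, not merely a convention).

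There are, however, two genuine gaps.

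\textbf{Finiteness (2).} Your proposed route through the layers $wb_{n/(n-1)}\hrfin$ is both unjustified and unnecessary. You assert that applying $\Hom_{\stablehomotopy}(\susp{-q}(\Gm^{-q}\wedge Y_+),-)$ to the layers yields ``motivic cohomology groups of auxiliary smooth $k$-schemes,'' but nothing in the paper computes these layers, and there is no reason to expect such a description. The paper's argument (see \ref{thm.filt.motcoh.finite}) is a single orthogonality step: $\susp{-q}(\Gm^{-q}\wedge Y_+)$ lies in $\neffstablehomotopy{-q}$ by inspection of the generators \eqref{eqn.Cqeff}, while $wb^c_{-q-1}\hrfin$ lies in $WB_{-q-1}^{\perp}=\northogonal{-q}$ by construction (\ref{def.birat.cover}) and \ref{thm.ort=wbirationalTspectra}. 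Hence the entire group $\Hom_{\stablehomotopy}(\susp{-q}(\Gm^{-q}\wedge Y_+), wb^c_{-q-1}\hrfin)$ vanishes, and a fortiori so does its image $F^{q+1}$. No spectral sequence or layer analysis is needed, and the finite coefficients play no role here.

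\textbf{Non-triviality.} Citing \ref{thm.hzcoversnotriv} is not enough. That result says the spectra $wb^c_{-n}\hrfin$ are non-trivial and $\theta^{\hrfin}_{-n}$ is not an isomorphism in $\stablehomotopy$; it does not by itself produce a scheme $Y$ and a class in $CH^q(Y)_{\finfield}$ lying in some $F^n$ with $n>0$. The paper's argument is concrete: take $q=\ell m$ and $a\in CH^m(Y)_{\finfield}$ with $a^{\ell}\neq 0$. Voevodsky's operation $P^m\colon \hrfin\to \hrfin^{2m(\ell-1),m(\ell-1)}$ satisfies $P^m(a)=a^{\ell}$, and the adjoint of $P^m$ is a map $\hrfin^{-2m(\ell-1),-m(\ell-1)}\to \hrfin$ which, by \ref{negoperbirat}, factors through $wb^c_{-m(\ell-1)}\hrfin$. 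This exhibits $0\neq a^{\ell}\in F^{m(\ell-1)}CH^q(Y)_{\finfield}$. The finite coefficients enter precisely here, to have the reduced power operations available.
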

\begin{proof}
Since $CH^{q}(Y)_{\finfield}\cong \Hom _{\stablehomotopy}(\susp{-q}(\Gm^{-q} \wedge Y_{+}), \hrfin)$,
we deduce all the properties (except for the non-triviality)
by combining \ref{thm.filt.motcohY} and \ref{thm.filt.motcoh.finite}.  

To show that the filtration is in general
non-trivial, we will assume that $q=\ell m$ for some $m\geq 1$, and that there exists 
$a\in CH^{m}(Y)_{\finfield}$ such that $0\neq a^{\ell}\in CH^{q}(Y)_{\finfield}$.
Hence by \cite[Lemma 9.8]{MR2031198}, we deduce that $0\neq a^{\ell}=P^{m}(a)\in CH^{q}(Y)_{\finfield}$,
where $P^{m}: \hrfin \rightarrow \hrfin ^{2m(\ell-1),m(\ell-1)}$ is the motivic operation $P^{m}$
constructed by Voevodsky in \cite[p. 33]{MR2031198}.  By adjointness, $P^{m}(a)$ is the following composition
in $\stablehomotopy$:
\[ \xymatrix{ \susp{-m(\ell-1)} (\Gm ^{-m(\ell-1)}\wedge \susp{-m}(\Gm ^{-m}\wedge Y_{+})) 
\ar[d]^-{\susp{-m(\ell-1)} (\Gm ^{-m(\ell-1)}\wedge a)}\\ \susp{-m(\ell-1)} (\Gm ^{-m(\ell-1)}\wedge \hrfin) 
=\hrfin ^{-2m(\ell-1),-m(\ell-1)}
\ar[d]^-{\susp{-m(\ell-1)} (\Gm ^{-m(\ell-1)}\wedge P^{m})}\\ \hrfin}
\] 
Finally, by \ref{negoperbirat} we deduce that $0\neq P^{m}(a)\in F^{m(\ell-1)}CH^{q}(Y)_{\finfield}$.
\end{proof}

Let $Y\in Sm_{X}$.  We will write $H^{\ast,\ast}(Y,R)$ for $\oplus _{p,q\in \mathbb Z}H^{p,q}(Y, R)$,
where:
\[	H^{p,q}(Y,R)=\Hom _{\stablehomotopy}(\susp{-p+q}(\Gm ^{-q}\wedge Y_{+}),HR)
\] 
is the motivic cohomology of $Y$ of degree $p$ and weight $q$, with $R$-coefficients.
In the next result we will consider motivic cohomology with integer and rational coefficients.
We will write $R$ for $\mathbb Z$ or $\mathbb Q$; and $\sphere$ for $\Sigma _{T}^{\infty}X_{+}$,
the sphere spectrum in $\stablehomotopy$.

\begin{thm}  \label{mainthm2}
Let the base scheme $X=\spec k$, with $k$ an algebraically closed field; and $Y\in Sm_{X}$.  
Then, there exists a non-trivial decreasing filtration
$F^{\bullet}$ on $H^{\ast,\ast}(Y,R)$, where the $n$-component in degree $p$
and weight $q$ is given by the image of 
$\theta _{-n\ast}^{\hr}$:
\[ \xymatrix{\Hom _{\stablehomotopy}(\susp{-p+q}(\Gm ^{-q}\wedge Y_{+}), wb^{c}_{-n}\hr) 
\ar[r]^-{\theta _{-n\ast}^{\hr}} &
\Hom _{\stablehomotopy}(\susp{-p+q}(\Gm ^{-q}\wedge Y_{+}), \hr) \ar@{=}[d]\\
& H^{p,q}(Y,R).} \]
In addition, the filtration $F^{\bullet}$ is functorial in $Y$ with respect to morphisms in $Sm_{X}$ and
satisfies:
\begin{enumerate}
		\item  $F^{0}H^{\ast,\ast}(Y,R)=H^{\ast,\ast}(Y,R)$,
		\item  $F^{q+1}H^{p,q}(Y,R)=0$.
	\end{enumerate}
\end{thm}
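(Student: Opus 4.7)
My proof plan mirrors that of \ref{mainthm1}: properties (1), (2), and functoriality in $Y$ follow from the identification $H^{p,q}(Y,R)\cong \Hom_{\stablehomotopy}(\susp{-p+q}(\Gm^{-q}\wedge Y_{+}),\hr)$ combined with \ref{thm.filt.motcohY} and \ref{thm.filt.motcoh.finite}, by the same argument as in \ref{mainthm1}. The substantive new content is non-triviality, and the proof of non-triviality in \ref{mainthm1} via the motivic Steenrod operation $P^{m}$ does not carry over, since $P^{m}$ is $\ell$-torsion and so is trivial with $\mathbb{Z}$- or $\mathbb{Q}$-coefficients. Instead I would use multiplication by a positive-weight class, which is where the hypothesis that $k$ is algebraically closed enters.

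For non-triviality, I would take $Y=\spec k$ and note that $H^{1,1}(\spec k,R)\cong K_{1}^{M}(k)\otimes R\cong k^{*}\otimes R$. For $k$ algebraically closed of positive transcendence degree over its prime field (for instance $k=\mathbb{C}$), the group $k^{*}$ has non-torsion elements, so $k^{*}\otimes R$ is non-zero for both $R=\mathbb{Z}$ and $R=\mathbb{Q}$. Pick such a non-zero $u$, represented by a morphism $u\colon \Gm^{-1}\to \hr$ in $\stablehomotopy$. Writing $1_{\hr}\colon \sphere\to \hr$ for the unit of the ring spectrum $\hr$ and $M_{u}\colon \hr\to \hr^{1,1}$ for multiplication by $u$, the class $u$ coincides with the composite $\Gm^{-1}\xrightarrow{\Gm^{-1}\wedge 1_{\hr}}\Gm^{-1}\wedge \hr\xrightarrow{\widetilde{M_{u}}}\hr$, where $\widetilde{M_{u}}$ is the adjoint of $M_{u}$ under $\hr^{1,1}\cong \Gm\wedge \hr$.

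The map $\widetilde{M_{u}}\colon \Gm^{-1}\wedge \hr\to \hr$ is a negatively Tate-twisted operation on $\hr$, and the factorization principle underlying \ref{negoperbirat} applies: $\widetilde{M_{u}}$ factors through $\theta^{\hr}_{-1\ast}\colon wb^{c}_{-1}\hr\to \hr$. Pre-composing this factorization with $\Gm^{-1}\wedge 1_{\hr}$ yields a lift of $u$ to $wb^{c}_{-1}\hr$, placing $u\in F^{1}H^{1,1}(\spec k,R)\setminus\{0\}$ and hence proving non-triviality. The main subtlety I expect is to verify that \ref{negoperbirat} applies to $\widetilde{M_{u}}$: the reference is invoked in \ref{mainthm1} for the operation derived from $P^{m}$, whose source $\susp{-m(\ell-1)}(\Gm^{-m(\ell-1)}\wedge \hr)$ carries both a negative simplicial shift and a negative Tate twist, whereas $\widetilde{M_{u}}$ has source $\Gm^{-1}\wedge \hr$ with no simplicial shift; one should check that the proof of \ref{negoperbirat} depends only on the negative Tate twist on the source, which is the natural and expected situation.
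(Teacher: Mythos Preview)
Your proposal is correct and follows essentially the same strategy as the paper. Both arguments produce a nonzero class $a\in H^{q,q}(\spec k,R)\cong K^{M}_{q}(k)_{R}$, form the multiplication-by-$a$ operation $\hr\to \hr^{q,q}$ (equivalently $\hr^{-q,-q}\to \hr$), and use \ref{negoperbirat} to lift through $wb^{c}_{-q}\hr$. The paper does this for general $q$, packages the lifting step by citing \ref{thm.hzcoversnotriv} and \ref{adj.motoper}, and phrases non-triviality as a statement about the tower; you take $q=1$ and spell out explicitly that the class $u$ itself lands in $F^{1}H^{1,1}(\spec k,R)\setminus\{0\}$, which is arguably the cleaner statement.

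Two minor remarks. First, your worry about \ref{negoperbirat} is unfounded: its hypothesis is $q\leq n$ with $p$ arbitrary, and your map $\widetilde{M_{u}}\colon \hr^{-1,-1}\to \hr$ is exactly the case $(p,q,n)=(-1,-1,-1)$, so the lemma applies verbatim. Second, your side condition ``positive transcendence degree over its prime field'' is slightly off: it excludes $\overline{\mathbb Q}$, where $k^{*}$ certainly has non-torsion elements (e.g.\ $2$). The condition you actually need for $k^{*}\otimes\mathbb Q\neq 0$ is that $k$ is not an algebraic closure of a finite field; the paper avoids this by citing that $K^{M}_{q}(k)$ is uniquely divisible for algebraically closed $k$, which for $q\geq 2$ is both torsion-free and (generically) nonzero.
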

\begin{proof}
Combining \ref{thm.filt.motcohY} and \ref{thm.filt.motcoh.finite},
we deduce all the properties, except for the non-triviality of the filtration.

To show the non-triviality of the filtration, by \ref{thm.hzcoversnotriv} and \ref{adj.motoper}
it suffices to construct a non-trivial map $HR\rightarrow \Gm ^{q}\wedge HR$, for $q>0$.  
To obtain such a map,
it suffices to show the existence of a non-trivial map $a: \sphere \rightarrow \Gm ^{q}\wedge HR$; since
$a\wedge HR$ will produce the non-trivial map we were looking for.

By definition of the sphere spectrum, $\Hom _{\stablehomotopy}(\sphere, \Gm ^{q}\wedge HR)\cong
H^{q,q}(X,R)$, where $X=\spec k$.  Now, a theorem of Nesterenko-Suslin and Suslin-Voevodsky
\cite{MR992981}, \cite[Thm. 3.4]{MR1744945} implies that $H^{q,q}(X,R)$ is isomorphic
to the degree $q$ Milnor $K$-theory of $k$ with $R$ coefficients, $K^{M}_{q}(k)_{R}$.
However, it is well-known that $K^{M}_{q}(k)_{\mathbb Z}$ is uniquely divisible
\cite[Ch. III, \S 7, Ex. 7.3(a)]{MR3076731}.  Thus,
$H^{q,q}(X,\mathbb Z)\neq 0$ and $H^{q,q}(X,\mathbb Q)\neq 0$, as we wanted.
\end{proof}

With rational coefficients, the filtration in \ref{mainthm2} is not only functorial with respect to
maps in $Sm_{X}$, but in fact it is functorial with respect to Suslin-Voevodsky transfers.
Let $DM_{\mathbb Q}$ 
denote Voevodsky's big (admitting infinite direct sums) triangulated category of motives with rational coefficients.

\begin{thm}  \label{mainthm3}
Let $R=\mathbb Q$, and $X=\spec k$ with $k$ a perfect field.  Then the filtration $F^{\bullet}$ in \ref{mainthm2} satisfies:
	\begin{enumerate}
		\item	\label{mainthm3.1}  $F^{\bullet}$ is functorial with respect to maps in $DM_{\mathbb Q}$.
		\item \label{mainthm3.2}  $F^{\bullet}$ is functorial with respect to Suslin-Voevodsky transfers.
	\end{enumerate}
\end{thm}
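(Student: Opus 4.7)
The plan is to reduce both statements to \ref{thm.birtowQloc}, which asserts that with rational coefficients every object $wb^{c}_{-n}\hr$ in the weakly birational tower lies in the essential image of $DM_{\mathbb Q}$ inside $\stablehomotopy$, and that each structure map $\theta_{-n\ast}^{\hr}$ is itself a morphism of motives. Since $\hr=H\mathbb Q$ is an $H\mathbb Q$-module, and since with rational coefficients $DM_{\mathbb Q}$ sits fully faithfully inside $\stablehomotopy$ as the subcategory of $H\mathbb Q$-modules, this provides a canonical identification
\[\Hom_{\stablehomotopy}(\susp{-p+q}(\Gm^{-q}\wedge Y_{+}), wb^{c}_{-n}\hr) \;\cong\; \Hom_{DM_{\mathbb Q}}(M(Y)(q)[p-q], wb^{c}_{-n}\hr)\]
(abusing notation and denoting the $DM_{\mathbb Q}$-lift by the same symbol), compatibly with $\theta_{-n\ast}^{\hr}$ and with pre-composition on the source.

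For \ref{mainthm3.1}, any morphism $f : N \to M(Y)(q)[p-q]$ in $DM_{\mathbb Q}$ induces by pre-composition a commutative square comparing the Hom groups defining $F^{n}H^{p,q}(Y,\mathbb Q)$ with the analogous Hom groups obtained by replacing $M(Y)(q)[p-q]$ by $N$. Since $F^{n}$ is by construction the image of $\theta_{-n\ast}^{\hr}$, this naturality immediately forces $f^{\ast}$ to send $F^{\bullet}$ into $F^{\bullet}$. For \ref{mainthm3.2}, a Suslin-Voevodsky transfer is by definition induced by a finite correspondence $\alpha \in c(Y',Y)$, which is precisely a morphism $M(Y') \to M(Y)$ in $DM_{\mathbb Q}$, so the transfer action on $H^{\ast,\ast}(-,\mathbb Q)$ reduces to a special case of \ref{mainthm3.1}.

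The only delicate point is to verify that the identification of Hom groups genuinely intertwines the pull-back along $\theta_{-n\ast}^{\hr}$ computed in $\stablehomotopy$ with its $DM_{\mathbb Q}$-analogue, rather than with some isomorphic but distinct variant. This is where the second half of \ref{thm.birtowQloc} --- that the tower maps, not only the objects, lift to $DM_{\mathbb Q}$ --- becomes essential: combined with the full faithfulness of $DM_{\mathbb Q} \hookrightarrow \stablehomotopy$ on the relevant subcategory, it settles the issue automatically. I expect the bulk of a careful write-up to consist of spelling out this embedding and recording the resulting coherence between the two categorical structures on the Hom-sets; once this bookkeeping is in place, both \ref{mainthm3.1} and \ref{mainthm3.2} follow formally.
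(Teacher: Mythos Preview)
Your proposal is correct and follows essentially the same route as the paper: reduce \eqref{mainthm3.2} to \eqref{mainthm3.1} via the functor $Cor(X)\to DM_{\mathbb Q}$, and deduce \eqref{mainthm3.1} from \ref{thm.birtowQloc}. The paper's own proof is extremely terse (``follows directly from \ref{thm.birtowQloc}''), whereas you spell out the adjunction mechanism through which the $\beilinson$-module structure on each $wb^{c}_{-n}H\mathbb Q$ and on the tower maps transports the filtration into $DM_{\mathbb Q}$; this extra detail is helpful and faithful to what the paper intends. One small remark: \ref{thm.birtowQloc}(3) is literally stated for the connecting maps $wb^{c}_{n}\to wb^{c}_{n+1}$ rather than for $\theta_{-n}^{H\mathbb Q}$, but the same argument (smash with $e:\sphere\to\beilinson$ and invoke \cite[Cor.~14.2.16]{Cisinski:2009fk}) applies verbatim to $\theta$, so your use of it is legitimate.
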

\begin{proof}
It suffices to prove \eqref{mainthm3.1}, since
there is a functor $Cor(X)\rightarrow DM_{\mathbb Q}$ \cite[Thm 3.2.6]{MR1764202}, where
$Cor(X)$ is the Suslin-Voevodsky category of finite correspondences over $X$; 
having same objects as $Sm_{X}$, morphisms $c(Y,Z)$ given by the group of finite relative cycles on
$Y\times _{X}Z$ over $Y$ \cite{MR1764199}, and composition as in 
\cite[(2.1) p.\,673]{MR2804268}.

Finally, \eqref{mainthm3.1} follows directly from \ref{thm.birtowQloc}.
\end{proof}

When $k$ is an algebraically closed field (e.g. $k=\mathbb C$), combining \ref{mainthm2} and \ref{mainthm3}
we deduce that the filtration $F^{\bullet}$ with rational coefficients
satisfies several of the properties of the still conjectural
Bloch-Beilinson-Murre filtration \cite{MR923131}, \cite{MR558224}, \cite{MR1225267}, {\cite[\S 2]{MR1265533}},
which was originally defined for the Chow groups with rational coefficients, 
but admits a natural extension to motivic cohomology of
arbitrary degree and weight.  Namely, our filtration $F^{\bullet}$ is non-trivial, finite and compatible with the action
of correspondences (since it admits Suslin-Voevodsky transfers).  Unfortunately, the methods of this paper are not
sufficient to show that the filtration is non-trivial when restricted to the Chow groups with rational coefficients.

\subsection{Definitions and Notation}	\label{subsec.defandnots}		
	In this paper $X$ will denote a Noetherian separated base scheme of finite Krull dimension,
	$Sch_{X}$ the category of schemes of finite type over $X$ and $Sm_{X}$ the full
	subcategory of $Sch_{X}$ consisting of smooth schemes over $X$ regarded
	as a site with the Nisnevich topology.	  All the maps between schemes will be considered over
	the base $X$.  Given $Y\in Sch_{X}$, all the closed subsets $Z$ of $Y$ will be considered
	as closed subschemes with the reduced structure.
	
	Let  $\unsmot$ be the category of pointed simplicial presheaves on $Sm_{X}$
	equipped with the motivic Quillen model structure \cite{MR0223432} constructed by Jardine 
	\cite[Thm.\,1.1]{MR1787949}, which is Quillen equivalent to the one defined originally by
	Morel-Voevodsky \cite[Thm.\,1.2]{MR1787949}, taking the affine line $\mathbb A _{X}^{1}$ as interval 
	\cite[p.\,86 Thm.\,3.2]{MR1813224}.
	Given a map $f:W\rightarrow Y$ in $Sm_{X}$, we will abuse notation and denote
	by $f$ the induced map $f:W_{+}\rightarrow Y_{+}$ in $\mathcal M$ between the corresponding pointed
	simplicial presheaves represented by $W$ and $Y,$ respectively.
	
	We define $T$ in 
	$\unsmot$ as the pointed simplicial presheaf represented by 
	$S^{1}\wedge \mathbb G_{m}$, where $\mathbb G_{m}$ is the multiplicative group 
	$\mathbb A^{1}_{X}-\{ 0 \}$ pointed by $1$, and $S^{1}$ denotes the simplicial circle.
	Given an arbitrary integer $r\geq 1$, let  $S^{r}$ (resp. $\Gm ^{r}$) denote the
	iterated smash product of $S^{1}$ (resp. $\Gm$) with $r$-factors: $S^{1}\wedge \cdots \wedge S^{1}$
	(resp. $\Gm \wedge \cdots \wedge \Gm$);
	$S^{0}= \Gm ^{0}$ will be by definition equal to the pointed simplicial presheaf represented by the base scheme $X$.
	Let $\Tspectra$ denote Jardine's category of symmetric $T$-spectra on 
	$\unsmot$ equipped with the motivic model structure defined in 
	\cite[Thm. 4.15]{MR1787949}.  The smash product of symmetric $T$-spectra endows
	$\Tspectra$ with the structure of a symmetric monoidal model category \cite[Prop. 4.14]{MR1787949}.  Hence, it is possible
	to consider (commutative) ring spectra with unit in $\Tspectra$ and modules in $\Tspectra$ over
	a given (commutative) ring spectrum.
	
We will use the language of triangulated categories.  Our main reference will 
be \cite{MR1812507}.  Given a triangulated category, we will write $\susp{1}$ 
(resp. $\susp{-1}$) to denote its suspension 
(resp. desuspension) functor; and for $n>0$, $\susp{n}$ (resp. $\susp{-n}$)
will be the composition of $\susp{1}$
(resp. $\susp{-1}$) iterated $n$-times  (resp. $-n$-times).  To simplify
the notation, we will write  $\susp{0}$ for the identity functor.

We will use the following notation in all the categories under consideration: $\ast$ will
	denote the terminal object, and $\cong$ will denote that a map or a functor is an isomorphism.
If a category is simplicial, we will write $Map(-,-)$ for its simplicial set of morphisms.

\subsection{Quillen model categories \cite{MR0338129}}  \label{subsec.Quillenmodcats}
	We will follow the conventions
	of \cite{MR1944041}, in particular all the Quillen model categories that we will
	consider will be closed under arbitrary limits and colimits.
	Given a Quillen model category $\mathcal A$, unless explicitly stated otherwise we will write $\homotcat (\mathcal A)$ for 	its 
	homotopy category.

\subsubsection{Bousfield localization}  \label{subsubsec.Bousloc}
For details and definitions about Bousfield localization we refer the reader to Hirschhorn's book \cite{MR1944041}.
	Let us just mention the following theorem of Hirschhorn, which guarantees the existence of left and right Bousfield localizations.
	
\begin{thm}[{see \cite[Thms. 4.1.1 and 5.1.1]{MR1944041}}]  \label{thm.Hirsch-Bousloc}
	Let $\mathcal A$ be a Quillen model category which is cellular and proper.  Let $L$ be a set of maps in $\mathcal A$ and
	let $K$ be a set of objects in $\mathcal A$.
	Then:
	\begin{enumerate}
		\item	The left Bousfield localization of $\mathcal A$ with respect to $L$ exists.
		\item	The right Bousfield localization of $\mathcal A$ with respect to the class of $K$-colocal
				equivalences exists.
	\end{enumerate}
\end{thm}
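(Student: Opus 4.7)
The plan is to follow Hirschhorn's strategy and treat the two parts separately, since the left and right Bousfield localizations rely on genuinely different technical devices even though they are formally dual. I would handle (1) first, because it admits a direct small object argument, and then indicate the modifications forced on (2) by the failure of cosmallness.

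For (1), introduce an \emph{$L$-local object} as a fibrant $W \in \mathcal A$ such that $Map(\tilde B, W) \to Map(\tilde A, W)$ is a weak equivalence of simplicial sets for every cofibrant replacement $\tilde A \to \tilde B$ of a map in $L$, and an \emph{$L$-local equivalence} as a map $g$ for which $Map(-, W)$ sends $g$ to a weak equivalence of simplicial sets for every $L$-local $W$. Propose the localized structure with the original cofibrations, $L$-local equivalences as new weak equivalences, and fibrations defined by right lifting. The core construction is a fibrant replacement $X \to L(X)$ with $L(X)$ being $L$-local, produced by running the small object argument against $J \cup J(L)$, where $J$ is the original set of generating trivial cofibrations and $J(L)$ consists of horns on (framed cofibrant replacements of) maps in $L$. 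Left properness is then used to identify the resulting relative cell complexes with the trivial cofibrations of the localized structure, and right properness enters in matching fibrations between fibrant objects with the original ones.

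The main obstacle, and the precise reason for the cellularity hypothesis, is to guarantee that all of these factorizations can be produced by a \emph{set} rather than a proper class of maps. Cellularity supplies the smallness of the domains of the generating cofibrations, together with the effective-monomorphism property needed to bound the cardinality of subcomplexes of any relative $J \cup J(L)$-cell complex, via the Bousfield--Smith cardinality argument; without this, the class of $L$-local trivial cofibrations is not a priori cofibrantly generated. For (2), I would dualize by defining $K$-colocal equivalences as maps $g$ for which $Map(K', -)$ (applied to a fibrant replacement of $g$) is a weak equivalence for every $K' \in K$, keeping the original fibrations and defining cofibrations by left lifting. The hard part here is that the small object argument does not dualize, so I would construct a \emph{cofibrant} replacement functor by a cellular approximation that stagewise attaches $K$-cells, and use cellularity once more to carry out a Bousfield--Smith-style bound showing that the $K$-colocal trivial cofibrations are cofibrantly generated by a set. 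This last step is where I expect the heaviest bookkeeping.
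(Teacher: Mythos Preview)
The paper does not give a proof of this statement at all: it is stated with a direct citation to Hirschhorn's book \cite[Thms.~4.1.1 and 5.1.1]{MR1944041} and then used as a black box. There is therefore nothing in the paper to compare your argument against; your sketch is essentially an outline of the proofs in the cited reference, which is exactly what the paper is invoking.

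One small correction on part~(2): in Hirschhorn's treatment the right Bousfield localization is \emph{not} shown to be cofibrantly generated, and no Bousfield--Smith cardinality argument is run for it. The trivial cofibrations of the colocalized structure coincide with the original ones (since the fibrations are unchanged), so that factorization is inherited for free; the genuine work is the (cofibration, trivial fibration) factorization, and Hirschhorn produces it by an explicit $K$-cellular approximation rather than by finding a generating set. Your phrase ``showing that the $K$-colocal trivial cofibrations are cofibrantly generated by a set'' is therefore not quite the right target; otherwise your outline is accurate.
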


The following result guarantees the existence of left and right Bousfield localizations for the motivic stable homotopy
	category $\Tspectra$.
	
\begin{thm}  \label{thm.Tspectra-cellular}
	The Quillen model category $\Tspectra$ is:
		\begin{enumerate}  
			\item  \label{thm.Tspectra-cellular.a}  \emph{cellular} (see  \cite{MR1860878},
							\cite[Cor. 1.6]{MR2197578} or \cite[Thm. 2.7.4]{MR2807904}).
			\item  \label{thm.Tspectra-celllular.b}  \emph{proper} (see \cite[Thm. 4.15]{MR1787949}).
		\end{enumerate}
\end{thm}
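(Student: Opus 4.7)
The plan is to reduce both statements to properties of the underlying motivic model category $\unsmot$ together with general facts about the passage to symmetric $T$-spectra, which is the path followed in the references cited in the statement.

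For part \eqref{thm.Tspectra-cellular.a}, I would verify the four conditions defining a cellular model category in the sense of Hirschhorn: (i) cofibrant generation with a set of generating cofibrations $I$ and generating trivial cofibrations $J$; (ii) compactness of the domains and codomains of the maps in $I$; (iii) smallness of the domains of the maps in $J$ relative to cofibrations; and (iv) the property that every cofibration is an effective monomorphism. The first step is to recall that Jardine's construction presents $\Tspectra$ as a left Bousfield localization of the level model structure on symmetric sequences in $\unsmot$, whose generating (trivial) cofibrations are obtained by applying the shifted free symmetric spectrum functors $F_{n}$ to the generating (trivial) cofibrations of $\unsmot$. Since $\unsmot$ is itself cellular (the generating cofibrations have the form $(\partial \Delta^{n}\hookrightarrow \Delta^{n})\wedge U_{+}$ for $U\in Sm_{X}$), the conditions (i)--(iii) propagate to $\Tspectra$ because the functors $F_{n}$ are left adjoints and Bousfield localization preserves cellularity provided the localizing set is small.

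The delicate point, and in my view the main obstacle, is condition (iv): verifying that every cofibration in $\Tspectra$ is an effective monomorphism, i.e. the equalizer of the two natural maps into its pushout. At the level of pointed simplicial presheaves this is automatic from the fact that cofibrations are objectwise monomorphisms, but at the level of symmetric spectra one needs to control the interaction with the $\Sigma_{n}$-actions on the levels. The standard way to handle this is to observe that cofibrations in $\Tspectra$ are, levelwise, monomorphisms of pointed simplicial presheaves (after forgetting the symmetric group action) and that equalizers are computed levelwise; this reduces effectivity in $\Tspectra$ to effectivity in $\unsmot$. I would then simply cite one of \cite{MR1860878}, \cite[Cor. 1.6]{MR2197578}, or \cite[Thm. 2.7.4]{MR2807904}, where this verification has been written down carefully.

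For part \eqref{thm.Tspectra-celllular.b}, left and right properness of $\Tspectra$ are established directly in \cite[Thm. 4.15]{MR1787949}: left properness follows from left properness of $\unsmot$ (where it is inherited from the injective model structure on pointed simplicial presheaves) together with the fact that the stabilization and Bousfield-localization procedures preserve left properness when the initial category is already left proper; right properness is deduced from right properness of $\unsmot$ together with the characterization of stable fibrations as level fibrations with a homotopy-cartesian adjoint structure map, which ensures pullbacks along stable fibrations detect stable weak equivalences. Since both statements are already available in Jardine's paper, my proof would consist of invoking these references and remarking that nothing new is needed for the purposes of applying \ref{thm.Hirsch-Bousloc} to $\Tspectra$ in the sequel.
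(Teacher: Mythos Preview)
Your proposal is correct and aligns with the paper's approach, which in fact gives no proof at all: the theorem is stated with the references embedded directly in items \eqref{thm.Tspectra-cellular.a} and \eqref{thm.Tspectra-celllular.b}, and the paper simply defers to those citations. Your sketch is an accurate elaboration of the content of the cited references, so nothing more is needed here.
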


\section{Localizing and orthogonal subcategories}  \label{sec.locsubcats}

	Most of the results in this section could be stated in the language of triangulated categories \cite[Ch. 8-9]{MR1812507}.
	However, we will work within the framework of Quillen model categories which has the advantage that in the applications
	to motivic homotopy theory, the constructions admit a natural extension 
	to the motivic unstable homotopy category \cite{Pelaezunstsl}.
	
	In this section, $\mathcal A$ will be a simplicial \cite[II.1, II.2]{MR0223432}, stable \cite[Ch. 7]{MR1650134}, proper and 
	cellular Quillen model category.  In particular, by \ref{thm.Hirsch-Bousloc} all the left and right Bousfield
	localizations that we will consider in this section exist.
	We will write $R$ (resp. $Q$) for a fibrant (resp. cofibrant) replacement functor in $\mathcal A$.
	
	Let $\mathcal T$ denote the homotopy category of $\mathcal A$.  We will consider $\mathcal T$ as a triangulated
	category where the distinguished triangles are given by the cofibre sequences \cite[I.3]{MR0223432} in $\mathcal A$ and the
	suspension functor is induced by smashing with respect to the simplicial circle $S^{1}$ \cite[Def. 6.1.1]{MR1650134}.

\subsection{Localizing subcategories}  \label{subsec.locsubcats}
	Let $\mathcal T '$ be a subcategory of $\mathcal T$.  Recall that $\mathcal T '$ is a localizing subcategory of 
	$\mathcal T$; if
	$\mathcal T '$ is a full triangulated subcategory of $\mathcal T$, and
	$\mathcal T '$ is closed under arbitrary coproducts.

\subsubsection{Generators}

	Let $\generators$ be a class of objects in $\mathcal T=\homotcat (\mathcal A)$.  We will follow the notation of
	Neeman \cite{MR2794632} and write
	\begin{align}  \label{eq.localizing.gen}
		Loc(\generators)
	\end{align}
	for the intersection of all the localizing subcategories of $\mathcal T$ which contain
	$\generators$.  We will say that $Loc(\generators)$ is the localizing subcategory of $\mathcal T$
	generated by $\generators$.
	
\begin{lem}   \label{lem.gens.gen}
	Let $K$ be an object in $Loc(\generators)$ such that for every $G\in \generators$ and every $n\in \mathbb Z$:
	$ \Hom _{Loc(\generators)}(\susp{n}G, K)=0$. 
	Then $K=\ast$.
\end{lem}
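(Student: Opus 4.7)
The plan is to show that the full subcategory
\[
\mathcal S = \bigl\{ L \in Loc(\generators) : \Hom_{Loc(\generators)}(\susp{n}L, K) = 0 \text{ for all } n \in \mathbb Z\bigr\}
\]
is itself a localizing subcategory of $Loc(\generators)$. Once this is established, the hypothesis says $\generators \subseteq \mathcal S$, so by minimality of $Loc(\generators)$ we must have $\mathcal S = Loc(\generators)$. In particular $K \in \mathcal S$, which applied with $n = 0$ gives $\Hom(K,K) = 0$, forcing $\mathrm{id}_K = 0$ and therefore $K \cong \ast$ (in a triangulated category, an object whose identity is the zero morphism is a zero object, since $\mathrm{id}_K$ then factors through $\ast$ and exhibits $K$ as a retract of $\ast$).

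So the whole proof reduces to verifying that $\mathcal S$ is a localizing subcategory. I would do this in the usual four checks. First, $\mathcal S$ is stable under the suspension and desuspension functors of $Loc(\generators)$: if $L \in \mathcal S$, then for any $m, n \in \mathbb Z$ we have $\Hom(\susp{n}\susp{m}L,K) = \Hom(\susp{n+m}L,K) = 0$, so $\susp{m}L \in \mathcal S$. Second, $\mathcal S$ is closed under extensions (and hence under taking cones): given a distinguished triangle $L \to L' \to L'' \to \susp{1}L$ with $L, L' \in \mathcal S$, applying the cohomological functor $\Hom(\susp{n}(-), K)$ produces a long exact sequence in which the outer terms vanish for every $n$, forcing the middle term $\Hom(\susp{n}L'', K)$ to vanish as well. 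Third, $\mathcal S$ is closed under arbitrary coproducts: since $\Hom(-, K)$ turns coproducts into products, $\Hom(\susp{n}\coprod_i L_i, K) \cong \prod_i \Hom(\susp{n}L_i, K) = 0$ whenever each $L_i \in \mathcal S$.

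I do not anticipate any real obstacle here; the only point that requires a word of justification is the last step, namely that $\mathrm{id}_K = 0$ implies $K \cong \ast$, which is a general fact about additive categories with a zero object and thus holds in the triangulated category $Loc(\generators)$. Everything else is a mechanical verification that the vanishing condition defining $\mathcal S$ is preserved by the operations building a localizing subcategory, using only additivity of $\Hom$ in the first variable and the long exact sequence associated with a distinguished triangle.
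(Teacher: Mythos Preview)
Your proof is correct and follows essentially the same approach as the paper: define the left-orthogonal of $K$, verify it is a localizing subcategory, and conclude by minimality that it contains $K$ itself, forcing $\mathrm{id}_K = 0$. The only cosmetic difference is that the paper defines $^{\perp}K$ inside the ambient $\mathcal T$ rather than inside $Loc(\generators)$, but since $Loc(\generators)$ is full in $\mathcal T$ this makes no difference; your version is also more explicit in spelling out the three closure checks and the final step that $\mathrm{id}_K = 0$ implies $K \cong \ast$.
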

\begin{proof}
	Let $^{\perp} K$ denote the full subcategory  of $\mathcal T$ which consists of objects $L\in \mathcal T$
	satisfying $\Hom _{\mathcal T}(\susp{n}L, K)=0$, for every $n\in \mathbb Z$.
	We observe that $^{\perp}K$ is triangulated
	and closed under arbitrary coproducts.  By hypothesis $\generators \subseteq ^{\perp}\! \! K$,
	hence $Loc(\generators)\subseteq ^{\perp}\! \! K$.  Thus, we conclude that
	$K=\ast$.
\end{proof}

\begin{cor}  \label{cor.gens.gen}
	Let $f:M\rightarrow N$ be a map in $Loc(\generators)$ such that for every $G\in \generators$ and every
	 $n\in \mathbb Z$, the induced map of abelian groups:
	 	\begin{align*}
			f_{\ast}:\Hom _{Loc(\generators)}(\susp{n}G, M)\rightarrow \Hom _{Loc(\generators)}(\susp{n}G,N)
		\end{align*}
	is an isomorphism.  Then $f$ is an isomorphism.
\end{cor}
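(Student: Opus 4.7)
The plan is to reduce the corollary to Lemma \ref{lem.gens.gen} by considering the cone of $f$. Since $Loc(\generators)$ is by definition a triangulated subcategory of $\mathcal T$ closed under arbitrary coproducts (hence in particular under cofibre sequences), any map in $Loc(\generators)$ can be completed to a distinguished triangle with all three terms lying in $Loc(\generators)$.

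First I would complete $f$ to a distinguished triangle
\[
M \xrightarrow{f} N \xrightarrow{g} C \xrightarrow{h} \susp{1}M
\]
in $Loc(\generators)$, with $C$ the cone of $f$. Next, for each $G\in \generators$ and each $n\in\mathbb Z$, I would apply the cohomological functor $\Hom_{Loc(\generators)}(\susp{n}G,-)$ to this triangle, obtaining a long exact sequence
\[
\cdots \to \Hom(\susp{n}G,M) \xrightarrow{f_{\ast}} \Hom(\susp{n}G,N) \xrightarrow{g_{\ast}} \Hom(\susp{n}G,C) \xrightarrow{h_{\ast}} \Hom(\susp{n-1}G,M) \xrightarrow{f_{\ast}} \Hom(\susp{n-1}G,N)\to \cdots
\]
of abelian groups (using the identification $\susp{1}\susp{n}=\susp{n+1}$, or equivalently shifting on the source side).

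By hypothesis each $f_{\ast}$ in this sequence is an isomorphism, so exactness forces $\Hom_{Loc(\generators)}(\susp{n}G,C)=0$ for every $G\in\generators$ and every $n\in\mathbb Z$. Since $C\in Loc(\generators)$, Lemma \ref{lem.gens.gen} applies and yields $C=\ast$.

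Finally, from $C=\ast$ and the distinguished triangle above, a standard triangulated-category argument (the map $g:N\to C=\ast$ factors as the zero map, so $f$ is isomorphic to the identity of $M$ up to an isomorphism, equivalently the triangle is isomorphic to $M\xrightarrow{\mathrm{id}}M\to \ast\to \susp{1}M$) gives that $f$ is an isomorphism in $Loc(\generators)$. I do not anticipate any serious obstacle: the only point requiring care is the observation that $C$ genuinely lies in $Loc(\generators)$, which is immediate from the definition of a localizing subcategory as a triangulated subcategory closed under coproducts.
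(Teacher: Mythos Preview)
Your proof is correct and follows exactly the same approach as the paper: complete $f$ to a distinguished triangle in $Loc(\generators)$, use the hypothesis together with the long exact sequence to see that all $\Hom_{Loc(\generators)}(\susp{n}G,-)$ vanish on the cone, apply Lemma~\ref{lem.gens.gen} to conclude the cone is trivial, and deduce that $f$ is an isomorphism. Your write-up is simply a more detailed version of the paper's terse argument.
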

\begin{proof}
	Complete $f$ to a distinguished triangle in $Loc(\generators)$: $M\rightarrow N\rightarrow K$.  Since the
	maps $f_{\ast}$ above are all isomorphisms, we conclude that for every $G\in \generators$ and every
	 $n\in \mathbb Z$:
			$\Hom _{Loc(\generators)}(\susp{n}G, K)=0$.
	By \ref{lem.gens.gen}, we deduce that $K\cong \ast$ in $Loc(\generators)$.  Thus $f:M\rightarrow N$
	is an isomorphism.
\end{proof}
	
\subsubsection{Presentation of localizing subcategories}  \label{subsubsec.presloc}

	From now on we will assume that $\generators$ in \eqref{eq.localizing.gen} is a set.  Let 
	 $\mathbf R _{\generators}\mathcal A$ denote the right Bousfield localization  of
	 $\mathcal A$ with respect to the set:
	 \begin{align} \label{eq.stable.generators}
		\susp{\pm \infty}\generators =\{ \susp{n}G: G\in \generators;  n\in \mathbb Z \}
	\end{align}

\begin{rmk}  \label{rmk.colobj.cof}
	By
	\cite[5.1.1.(1)(a) and 3.1.8]{MR1944041} we can assume that the objects in $\susp{\pm \infty}G$ are cofibrant in $\mathcal A$.
\end{rmk}

	Let $C$ be a functorial cofibrant replacement in $\mathbf R _{\generators}\mathcal A$.  Recall that $\mathcal A$ is a 
	simplicial Quillen model category with  fibrant replacement functor $R$.

\begin{prop}  \label{prop.homot.triang}
	The homotopy category of $\mathbf R _{\generators}\mathcal A$, $\homotcat(\mathbf R _{\generators}\mathcal A)$
	is a triangulated category and the Quillen adjunction 
	$(id, id, \varphi): \mathbf R _{\generators}\mathcal A \rightarrow \mathcal A$ 
	induces an adjunction between triangulated functors:
	\begin{align*}
		(C,R,\varphi ): \homotcat (\mathbf R _{\generators}\mathcal A) \rightarrow \mathcal T = \homotcat (\mathcal A) 
	\end{align*}
\end{prop}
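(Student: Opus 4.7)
First I would verify that $(id,id,\varphi)$ is genuinely a Quillen adjunction.  By \ref{thm.Hirsch-Bousloc} and \cite[Ch.~5]{MR1944041}, right Bousfield localization leaves the class of fibrations unchanged and enlarges the weak equivalences to the class of colocal equivalences with respect to $\susp{\pm \infty}\generators$.  Therefore $id:\mathcal A\rightarrow \mathbf R _{\generators}\mathcal A$ preserves fibrations and trivial fibrations, so it is right Quillen with left adjoint $id:\mathbf R _{\generators}\mathcal A\rightarrow \mathcal A$; this is the adjoint pair $(id,id,\varphi)$ in the statement.

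Next, I would establish that $\mathbf R _{\generators}\mathcal A$ is a stable simplicial model category, so that $\homotcat (\mathbf R _{\generators}\mathcal A)$ acquires a triangulated structure whose distinguished triangles come from cofibre sequences (see \cite[Ch.~7]{MR1650134}).  Simpliciality is inherited from $\mathcal A$ by the standard compatibility of right Bousfield localization with simplicial enrichments in the cellular proper setting.  For stability, the simplicial enrichment yields $Map(\susp{1}G,-)\simeq \Omega Map(G,-)$ on fibrant objects; together with the closure of $\susp{\pm \infty}\generators$ under $\susp{\pm 1}$ built into \eqref{eq.stable.generators}, this forces a map $f$ to be a colocal equivalence if and only if $\susp{1}f$ is, and likewise for $\susp{-1}$.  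Combined with the stability of $\mathcal A$, this promotes $\susp{1}$ and $\susp{-1}$ to mutually inverse self-equivalences of $\homotcat (\mathbf R _{\generators}\mathcal A)$.

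Third, I would identify the total derived functors of the adjunction.  Because the trivial fibrations of $\mathbf R _{\generators}\mathcal A$ contain those of $\mathcal A$, its cofibrations form a subclass of the cofibrations of $\mathcal A$; in particular every cofibrant object of $\mathbf R _{\generators}\mathcal A$ is already cofibrant in $\mathcal A$, so the cofibrant replacement $C$ computes the total left derived of $id:\mathbf R _{\generators}\mathcal A\rightarrow \mathcal A$.  Since the fibrations agree, the fibrant replacement $R$ of $\mathcal A$ is still a fibrant replacement in $\mathbf R _{\generators}\mathcal A$ and computes the total right derived of $id:\mathcal A\rightarrow \mathbf R _{\generators}\mathcal A$.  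Both $C$ and $R$ are induced from the identity on underlying categories between stable simplicial model categories, so they commute with $\susp{1}$ up to natural isomorphism and send cofibre sequences to cofibre sequences; this upgrades the derived adjunction to the adjunction of triangulated functors $(C,R,\varphi)$ claimed in the statement.

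The main obstacle is the stability step: right Bousfield localization does not preserve stability in general, and one must check that the closure of $\susp{\pm \infty}\generators$ under $\susp{\pm 1}$, combined with the simplicial enrichment, is exactly what forces the derived suspension on $\homotcat (\mathbf R _{\generators}\mathcal A)$ to be invertible.  The remaining verifications are formal manipulations with Hirschhorn's definitions of right Bousfield localization.
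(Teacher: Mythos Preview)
Your proof is correct and follows essentially the same approach as the paper: both hinge on the $\susp{\pm 1}$-invariance of $\susp{\pm\infty}\generators$ to obtain stability of $\mathbf R _{\generators}\mathcal A$, and then invoke standard facts about derived functors between stable model categories to get the triangulated adjunction. The paper is simply terser, citing \cite[3.3.20(2)(b)]{MR1944041} directly for the stability step (that $(S^{1},\Omega_{S^{1}})$ remains a Quillen equivalence after right Bousfield localization) and \cite[Props.~6.4.1, 7.1.12]{MR1650134} for exactness of $C$ and $R$, where you unpack these verifications by hand.
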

\begin{proof}
	By construction,
	the suspension functor $\susp{1}$ of $\homotcat (\mathcal A)$ is induced by the Quillen adjunction
	$(S^{1},\Omega _{S^{1}},\varphi):\mathcal A \rightarrow \mathcal A$.  Since $\susp{\pm \infty}\generators=\susp{1}(\susp{\pm \infty}\generators)$,
	it follows from \cite[3.3.20(2)(b)]{MR1944041} that
	$(S^{1},\Omega _{S^{1}},\varphi):\mathbf R _{\generators}\mathcal A \rightarrow \mathbf R _{\generators}\mathcal A$
	is a Quillen equivalence.  Thus, $\mathbf R _{\generators}\mathcal A$ is a stable model category and
	its homotopy category $\homotcat(\mathbf R _{\generators}\mathcal A)$ is a triangulated category.
		
	Since the identity functor $id: \mathcal A\rightarrow \mathbf R _{\generators}\mathcal A$ is a right Quillen functor,
	we conclude that $(C,R,\varphi )$ is an adjunction.  By \cite[Props. 6.4.1 and 7.1.12]{MR1650134} we
	deduce that $C$ and $R$ are triangulated functors.
\end{proof}


\begin{prop}  \label{prop.gens.gen}
	Let $f:M\rightarrow N$ be a map in $\mathbf R _{\generators}\mathcal A$.  Then $f$ is a weak equivalence in $\mathbf R _{\generators}\mathcal A$
	if and only if  for every $G\in \generators$ and every
	 $n\in \mathbb Z$, the induced map of abelian groups:
	 	\begin{align*}
			f_{\ast}:\Hom _{ \homotcat (\mathcal A)}(\susp{n}G, M)\rightarrow 
			\Hom _{ \homotcat (\mathcal A)}(\susp{n}G,N)
		\end{align*}
	is an isomorphism. 
\end{prop}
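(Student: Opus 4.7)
The plan is to unwind the definition of weak equivalence in $\mathbf R _{\generators}\mathcal A$ provided by Hirschhorn \cite{MR1944041}, and then translate the resulting simplicial-set condition into a homotopy-category statement using that $\mathcal A$ is stable and simplicial.

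First, by the construction of the right Bousfield localization, a map $f:M\rightarrow N$ is a weak equivalence in $\mathbf R _{\generators}\mathcal A$ if and only if it is a $\susp{\pm \infty}\generators$-colocal equivalence. By Hirschhorn's definition, this means that for every $A\in \susp{\pm \infty}\generators$, the induced map of homotopy function complexes $Map(A,RM)\rightarrow Map(A,RN)$ is a weak equivalence of simplicial sets, where $R$ is the fibrant replacement functor of $\mathcal A$ (which is also a fibrant replacement in $\mathbf R _{\generators}\mathcal A$, since right Bousfield localization does not alter the fibrations). By \ref{rmk.colobj.cof}, the objects $\susp{n}G$ are already cofibrant in $\mathcal A$, so no cofibrant replacement on the source is needed, and the relevant test objects are precisely $\{ \susp{n}G : G\in \generators,\ n\in \mathbb Z\}$.

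Next, I would use the stability and simpliciality of $\mathcal A$ to identify the homotopy groups of the function complex. For $G$ cofibrant and $Y$ fibrant, there is a natural isomorphism
\[
\pi _{k}\,Map(\susp{n}G,Y)\,\cong\, \Hom _{\homotcat (\mathcal A)}(\susp{n+k}G,Y)\qquad (k\geq 0).
\]
Moreover, in a stable simplicial model category the mapping spaces are infinite loop spaces, so their path components are all homotopy equivalent and a map between them is a weak equivalence of simplicial sets exactly when it induces isomorphisms on all $\pi _{k}$ at the basepoint. Combining these observations, the condition that $Map(\susp{n}G,Rf)$ be a weak equivalence for every $G\in \generators$ and every $n\in \mathbb Z$ is equivalent to the condition that $\Hom _{\homotcat (\mathcal A)}(\susp{n+k}G,f)$ be an isomorphism for every such $G$, every $n\in \mathbb Z$ and every $k\geq 0$. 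Since $n+k$ ranges over all of $\mathbb Z$ as $n$ does, this is exactly the hypothesis that $f_{\ast}:\Hom _{\homotcat (\mathcal A)}(\susp{m}G,M)\rightarrow \Hom _{\homotcat (\mathcal A)}(\susp{m}G,N)$ is an isomorphism for every $G\in \generators$ and every $m\in \mathbb Z$, proving both implications simultaneously.

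The main technical point is the bridge between the two languages, namely the identification $\pi _{k}Map(\susp{n}G,RY)\cong \Hom _{\homotcat (\mathcal A)}(\susp{n+k}G,Y)$ together with the reduction of ``weak equivalence of simplicial sets'' to ``isomorphism on $\pi _{k}$ at the basepoint''. Both rely on the hypotheses that $\mathcal A$ is stable and simplicial and are standard consequences of this structure; the cellular and proper hypotheses enter only to ensure the existence of $\mathbf R _{\generators}\mathcal A$ via \ref{thm.Hirsch-Bousloc}.
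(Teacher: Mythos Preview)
Your proof is correct and follows essentially the same route as the paper's. The only difference is presentational: where you invoke the general fact that mapping spaces in a stable simplicial model category are infinite loop spaces (so that basepoint $\pi_{k}$-isomorphisms suffice), the paper carries out that argument explicitly in the $(\Leftarrow)$ direction, writing $Map(H,R-)\simeq Map(S^{1},Map(\Sigma^{-1}H,R-))$ and noting that looping kills the non-basepoint components.
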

\begin{proof}
	($\Rightarrow$): Assume that $f$ is a weak equivalence in $\mathbf R _{\generators}\mathcal A$, equivalently
	a $\susp{\pm \infty}\generators$-colocal equivalence in $\mathcal A$ \cite[5.1.1(1)(a)]{MR1944041}.  By
	\ref{rmk.colobj.cof},
	we can assume that all the objects in $\susp{\pm \infty}\generators$ are cofibrant in $\mathcal A$.  Hence, 
	for every $H$ in $\susp{\pm \infty}\generators$
	the following maps are weak equivalences of simplicial sets \cite[3.1.8]{MR1944041}:
		\[ \xymatrix{Map(H , RX) \ar[r]^-{(Rf)_{\ast}} & 
								Map(H , RY)}
		\]
	Since
	$\mathcal A$ is a simplicial model category and $H$ is cofibrant,
	we deduce that $Map(H , RX)$ and $Map(H , RY)$ are both Kan complexes.  Thus, for every $r\geq 0$  
	and every $H\in \susp{\pm \infty}\generators$ the following  diagram commutes,
	where the top row and the vertical maps are all isomorphisms of abelian groups:
		\[ \xymatrix{\pi_{r}Map(H , RX) \ar[r]^-{(Rf)_{\ast}}_-{\cong} \ar[d]_-{\cong} 
								& \pi_{r}Map(H , RY) \ar[d]^-{\cong}\\
								\Hom _{\homotcat (\mathcal A)}(\susp{r}H ,X) \ar[r]_-{f_{\ast}}
								& \Hom _{\homotcat (\mathcal A)}(\susp{r}H ,Y)}
		\]
	Therefore, for every $G\in \generators$ (see \ref{eq.stable.generators}) and every $n\in \mathbb Z$ the map:
		\[ \xymatrix{\Hom _{\homotcat (\mathcal A)}(\susp{n}G ,X) \ar[r]_-{f_{\ast}}& \Hom _{\homotcat (\mathcal A)}(\susp{n}G ,Y)}
		\]
	is an isomorphism of abelian groups.
	
	($\Leftarrow$):  Fix $H \in \susp{\pm \infty}\generators$.  Let
	$\omega _{0}$, $\eta _{0}$ denote the base points of
	$Map(H ,R)$ and $Map(H , RY)$ respectively.
	It suffices to show that the map:
		\[ \xymatrix{Map(H , RX) \ar[r]^-{(Rf)_{\ast}}& 
								Map(H , RY)}
		\]
	is a weak equivalence of simplicial sets \cite[5.1.1(1)(a) and 3.1.8]{MR1944041}.  
	
	Since $\mathcal A$ is a pointed simplicial model category,
	lemma 6.1.2 in \cite{MR1650134} 
	implies that for $k\geq 0$ the following diagram commutes:
		\[ \xymatrix{\pi_{k, \omega _{0}}Map(H , RX) \ar[r]^-{(Rf)_{\ast}}\ar[d]_-{\cong}  
								& \pi_{k, \eta _{0}}Map(H , RY)\ar[d]^-{\cong}\\
								\Hom _{\homotcat (\mathcal A)}(\susp{k}H, X) \ar[r]^-{f_{\ast}}
								& \Hom _{\homotcat (\mathcal A)}(\susp{k}H, Y) }
		\]
	By hypothesis the bottom row is an isomorphism of abelian groups, thus
	we deduce that all the maps in the top row are also isomorphisms.  Therefore,
	for every $H \in \susp{\pm \infty}\generators$, the induced map of simplicial sets:
		\[ \xymatrix{Map(H , RX) \ar[r]^-{(Rf)_{\ast}}&
								Map(H , RY)}
		\] 
	is a weak equivalence when it is restricted to the path component of $Map(H , RX)$
	containing $\omega _{0}$.  We observe that $\susp{-1}H$ is also in $\susp{\pm \infty}\generators$, hence
	the following map:
		\[ \xymatrix{Map(S^{1},Map(\susp{-1}H, RX)) 
								\ar[r] &
								Map(S^{1},Map(\susp{-1}H, RY))}
		\]
	is a weak equivalence of simplicial sets, since taking $S^{1}$-loops kills the path components that do not
	contain the base point.  Then, we observe that the rows in the following
	commutative diagram are isomorphisms, since $\mathcal A$ is a simplicial stable model category:
		\[ \xymatrix{Map(S^{1},Map(\susp{-1}H, RX)) \ar[r]^-{\cong} 
								\ar[d] & Map(H, RX) \ar[d]^-{(Rf)_{\ast}}\\
								Map(S^{1},Map(\susp{-1}H, RY)) \ar[r]_-{\cong} & Map(H, RY)}
		\]
	Hence, the two out of three property for weak equivalences implies that the $(Rf)_{\ast}$
	is a weak equivalence of simplicial sets.  
\end{proof}

\begin{thm}  \label{thm.preslocsub}
	Assume that all the objects in $\generators$ are compact in $Loc(\generators)$ in the sense of Neeman
	\cite[Def. 1.6]{MR1308405}.  Then,  
	$Loc(\generators)$ is naturally equivalent as a triangulated category to the homotopy category 
	$\homotcat (\mathbf R _{\generators}\mathcal A)$.
\end{thm}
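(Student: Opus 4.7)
The plan is to extract the equivalence from the Quillen adjunction $(C, R, \varphi)$ of Proposition \ref{prop.homot.triang}, by showing that $C$ factors through $Loc(\generators)$ and that the resulting adjoint pair between $\homotcat(\mathbf{R}_{\generators}\mathcal{A})$ and $Loc(\generators)$ has invertible unit and counit. Recall that $C(Y)$ is represented by a cofibrant replacement of $Y$ in $\mathbf{R}_{\generators}\mathcal{A}$, while $R(X)$ is represented by a fibrant replacement of $X$ in $\mathcal{A}$.

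The first step is to establish that $C$ lands in $Loc(\generators)$. A cofibrant object of $\mathbf{R}_{\generators}\mathcal{A}$ is a $\susp{\pm \infty}\generators$-colocal cofibrant object of $\mathcal{A}$; by the cellularization theory for right Bousfield localizations in \cite{MR1944041}, such an object admits a presentation as a transfinite composition of pushouts of coproducts of cells $\susp{n}G$ with $G \in \generators$ and $n \in \mathbb{Z}$. Since $Loc(\generators)$ is closed under coproducts, cofibres, and retracts, every such cellular object lies in $Loc(\generators)$. The compactness hypothesis on the objects of $\generators$ enters precisely here, ensuring that these transfinite cellular constructions can be controlled inside $Loc(\generators)$ and match the triangulated closure operations defining it.

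Next I would verify that the counit and the unit are isomorphisms on the restricted categories. For $X \in Loc(\generators)$, the counit $\epsilon_X \colon CRX \to X$ is the derived form of the cofibrant-approximation map in $\mathbf{R}_{\generators}\mathcal{A}$, hence a $\susp{\pm \infty}\generators$-colocal weak equivalence. By Proposition \ref{prop.gens.gen} it induces isomorphisms $\Hom_{\mathcal{T}}(\susp{n}G, CRX) \to \Hom_{\mathcal{T}}(\susp{n}G, X)$ for every $G \in \generators$ and $n \in \mathbb{Z}$; since both objects lie in $Loc(\generators)$ by the previous step, Corollary \ref{cor.gens.gen} upgrades this to an isomorphism in $Loc(\generators)$. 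For the unit $\eta_Y \colon Y \to RCY$, Proposition \ref{prop.gens.gen} again reduces the check to showing $\eta_{Y\ast}$ is an isomorphism on $\Hom_{\mathcal{T}}(\susp{n}G, -)$; this follows from the adjunction together with Remark \ref{rmk.colobj.cof} (so that $\susp{n}G$ is cofibrant in $\mathbf{R}_{\generators}\mathcal{A}$, giving $C(\susp{n}G) \cong \susp{n}G$) and the fact that $CY \to Y$ is a colocal weak equivalence, via a standard triangle-identity diagram chase.

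The main obstacle is the first step: translating the model-categorical notion of $\susp{\pm \infty}\generators$-colocal cofibrant object into the triangulated notion of object of $Loc(\generators)$, which requires combining the cellularization theory for right Bousfield localizations with the compactness hypothesis on $\generators$. Once that matching is in place, the rest is formal: $C$ and $R$ are triangulated functors by Proposition \ref{prop.homot.triang}, and the invertibility of both unit and counit (on the appropriate subcategories) upgrades the derived adjunction to the desired natural equivalence of triangulated categories $Loc(\generators) \simeq \homotcat(\mathbf{R}_{\generators}\mathcal{A})$.
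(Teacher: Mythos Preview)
Your overall strategy is sound but differs from the paper's, and your account of where compactness enters is off.

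The paper runs the equivalence in the opposite direction: it shows that the composite $R\circ i\colon Loc(\generators)\to \homotcat(\mathbf R_{\generators}\mathcal A)$ is an equivalence, where $i\colon Loc(\generators)\hookrightarrow \mathcal T$ is the inclusion. Full faithfulness is obtained from Hirschhorn's results (5.1.5--5.1.6, 3.5.2(2)): every object of $Loc(\generators)$ is colocal, hence cofibrant in $\mathbf R_{\generators}\mathcal A$, and $R$ is fully faithful on colocal objects. Essential surjectivity is where the paper uses compactness: by Lemma~\ref{lem.gens.gen} and the compactness hypothesis, $Loc(\generators)$ is compactly generated, so Neeman's Brown representability yields a right adjoint $r$ to $i$; then for any $K$ the counit $irK\to K$ induces isomorphisms on $\Hom_{\mathcal T}(\susp{n}G,-)$ by adjunction, whence it is an isomorphism in $\homotcat(\mathbf R_{\generators}\mathcal A)$ by Proposition~\ref{prop.gens.gen}.

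Your route through $C$ can also be made to work, but your Step~1 is where the real content lies, and compactness is \emph{not} what makes it go. Once you know that the cofibrant replacement in $\mathbf R_{\generators}\mathcal A$ is built as a $\Lambda(\susp{\pm\infty}\generators)$-cell complex (this is the substance of Hirschhorn's construction, and you should cite the precise statement rather than invoke ``cellularization theory'' generically), the cofibres of the attaching maps are suspensions of objects of $\generators$, and closure of $Loc(\generators)$ under homotopy colimits and retracts finishes the argument---no compactness required. Indeed, as written your proof never uses the compactness hypothesis at all; Steps~2 and~3 are formal once Step~1 is in place. So either you have (perhaps inadvertently) proved a stronger statement than the paper claims, or you should revisit Step~1 to make sure the Hirschhorn input you need is really available in the stated generality. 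The paper's approach, by contrast, is explicit about where compactness is spent (Brown representability) and avoids unpacking the cellular machinery, at the cost of the extra hypothesis.
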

\begin{proof}
	Combining \ref{lem.gens.gen} with the compactness of the objects in $\generators$,
	we conclude that $Loc(\generators)$ is compactly generated in the sense of Neeman \cite[Def. 1.7]{MR1308405}.
	Thus, by Neeman's version of Brown representability \cite[Thm. 4.1]{MR1308405}, we deduce that the inclusion
	$i: Loc(\generators)\rightarrow \mathcal T$ admits a right adjoint $r$.  
	
	Let $\eta$ denote the counit of the adjunction
	$(i,r,\varphi ):Loc(\generators)\rightarrow \mathcal T$, and let
	$K\in \mathbf R _{\generators}\mathcal A$ be an arbitrary cofibrant and fibrant object.    We
	observe that for every $G\in \generators$ and every $n\in \mathbb Z$, $\susp{n}G$ is in $Loc(\generators)$. Since $i$ is a 
	full embedding, we deduce:
		\[ \Hom _{ \homotcat (\mathcal A)}(\susp{n}G, i\circ r(K)) =
			\Hom _{\homotcat (\mathcal A)}(i(\susp{n}G), i\circ r(K)) =
			\Hom _{ Loc(\generators)}(\susp{n}G, r(K))
		\]
	Thus,
	by adjointness: 
		\begin{align*} 
			\Hom _{ \homotcat (\mathcal A)}(\susp{n}G, i\circ r(K)) & = \Hom _{ Loc(\generators)}(\susp{n}G, r(K)) \\
				& \cong \Hom _{ \homotcat (\mathcal A)}(i(\susp{n}G), K) = 
					\Hom _{\homotcat (\mathcal A)}(\susp{n}G, K)
		\end{align*}
	Hence, we deduce that for every $G\in \generators$, and every 
	 $n\in \mathbb Z$, the induced map of abelian groups:
	 	\begin{align*}
			\eta ^{K}_{\ast}:\Hom _{ \homotcat (\mathcal A)}(\susp{n}G, i\circ r(K))\rightarrow 
			\Hom _{ \homotcat (\mathcal A)}(\susp{n}G,K)
		\end{align*}
	is an isomorphism.  Thus, by \ref{prop.gens.gen} we conclude that the
	map $R(\eta ^{K}):R(i\circ r(K))\rightarrow K$ in $\homotcat (\mathbf R _{\generators}\mathcal A)$ is an isomorphism.
	This implies that the triangulated functor 
	$R\circ i:Loc(\generators) \rightarrow \homotcat (\mathbf R _{\generators}\mathcal A)$ is essentially surjective on objects.
	Thus, it only remains to check that
	$R\circ i$ is fully faithful.
	
	By \ref{rmk.colobj.cof}, we can assume that the objects in $\susp{\pm \infty}\generators$ are cofibrant in
	$\mathcal A$.  Thus, it follows from \cite[5.1.6]{MR1944041}  that the objects in $Loc(\generators)$
	are all cofibrant in $\mathbf R _{\generators}\mathcal A$.  Then, by
	\cite[3.5.2(2)]{MR1944041} we deduce that $R\circ i$ is fully faithful, since all the objects in $Loc(\generators)$
	are cofibrant in $\mathbf R _{\generators}\mathcal A$ and hence colocal in $\mathcal A$ 
	\cite[5.1.5 and 5.1.6]{MR1944041}.  	
\end{proof}

\begin{cor} \label{cor.hocatcompgen}
	Assume that all the objects in $\generators$ are compact in $Loc(\generators)$ in the sense of Neeman
	\cite[Def. 1.6]{MR1308405}.  Then,  $\homotcat (\mathbf R _{\generators}\mathcal A)$ is a compactly generated
	triangulated category in the sense of Neeman \cite[Def. 1.6]{MR1308405}, with $\susp{\pm \infty}\generators$
	(see \ref{eq.stable.generators}) as set of compact generators \cite[Def. 1.8]{MR1308405}.
\end{cor}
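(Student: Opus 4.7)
The plan is to transport both compactness and the generation condition from $Loc(\generators)$ to $\homotcat (\mathbf R _{\generators}\mathcal A)$ via the triangulated equivalence $R\circ i$ produced in \ref{thm.preslocsub}. Since that theorem has already been proved, the corollary should follow fairly formally from it together with \ref{lem.gens.gen}.

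First, I would invoke the fact that any equivalence of triangulated categories preserves arbitrary coproducts (being a categorical equivalence, it preserves all colimits) and commutes with the suspension. Consequently, it preserves the notion of compactness in the sense of \cite[Def. 1.6]{MR1308405}. By hypothesis each $G\in \generators$ is compact in $Loc(\generators)$; since $\susp{\pm 1}$ are auto-equivalences of both categories, every $\susp{n}G$ is compact as well, and transporting along $R\circ i$ shows that every element of $\susp{\pm \infty}\generators$ is compact in $\homotcat (\mathbf R _{\generators}\mathcal A)$.

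Second, to obtain the generation condition of \cite[Def. 1.8]{MR1308405} I would take an arbitrary $K\in \homotcat (\mathbf R _{\generators}\mathcal A)$ satisfying $\Hom (\susp{n}G,K)=0$ for every $G\in \generators$ and every $n\in \mathbb Z$. Using the essential surjectivity of $R\circ i$ I would write $K\cong R\circ i(K')$ for some $K'\in Loc(\generators)$, and using the full faithfulness of $R\circ i$ I would translate the vanishing condition into $\Hom _{Loc(\generators)}(\susp{n}G,K')=0$ for all $G\in \generators$ and $n\in \mathbb Z$. Then \ref{lem.gens.gen} yields $K'\cong \ast$, and therefore $K\cong \ast$ in $\homotcat (\mathbf R _{\generators}\mathcal A)$. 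Since $\susp{\pm \infty}\generators$ is manifestly a set whenever $\generators$ is, this exhibits $\homotcat (\mathbf R _{\generators}\mathcal A)$ as compactly generated with $\susp{\pm \infty}\generators$ as the desired set of compact generators.

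No serious obstacle is expected; the argument is essentially formal bookkeeping along the equivalence of \ref{thm.preslocsub}. The only point that might need a moment of care is the matching of Hom-groups between $Loc(\generators)$ and $\homotcat (\mathbf R _{\generators}\mathcal A)$ under $R\circ i$, but this is automatic from the fully faithfulness already established in the proof of \ref{thm.preslocsub}.
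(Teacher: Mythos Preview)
Your proposal is correct and follows essentially the same approach as the paper: invoke the triangulated equivalence of \ref{thm.preslocsub} and use \ref{lem.gens.gen} (together with the compactness hypothesis on $\generators$) to verify the generation condition. The paper phrases this slightly more tersely---reducing via the equivalence to showing $Loc(\generators)$ is compactly generated by $\susp{\pm\infty}\generators$---while you transport the two conditions explicitly along $R\circ i$, but the content is the same.
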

\begin{proof}
	By \ref{thm.preslocsub} it suffices to show that $Loc(\generators)$ is compactly generated with 
	set of generators given by $\susp{\pm \infty}\generators$.  But this follows directly from \ref{lem.gens.gen} since we
	are assuming that $\generators$ is a set, and that all its objects are compact.
\end{proof}

\subsection{Orthogonal subcategories}  \label{subsec.orth}

Given a triangulated subcategory $\mathcal T'$ of $\mathcal T$, let $\mathcal T'^{\perp}$ (resp. $^{\perp}\! \mathcal T'$)
denote the full subcategory of $\mathcal T$ with objects $K$ such that for every $A\in \mathcal T'$,
$\Hom _{\mathcal T}(A,K)=0$ (resp. $\Hom _{\mathcal T}(K,A)=0$).  

\begin{lem}  \label{lem.orttriang}
	Let $\mathcal T'$ be triangulated subcategory of $\mathcal T$, then $\mathcal T'^{\perp}$ and $^{\perp}\! \mathcal T'$ 
	are full triangulated subcategories of $\mathcal T$.
\end{lem}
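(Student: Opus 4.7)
The plan is to check the three conditions that make a full subcategory $\mathcal{S}$ of a triangulated category $\mathcal{T}$ into a triangulated subcategory: (i) $\mathcal{S}$ is a full subcategory (automatic from the definition, since both $\mathcal{T}'^{\perp}$ and $^{\perp}\!\mathcal{T}'$ are defined as full subcategories), (ii) $\mathcal{S}$ is closed under $\susp{1}$ and $\susp{-1}$, and (iii) $\mathcal{S}$ is closed under extensions, i.e.\ if two vertices of a distinguished triangle lie in $\mathcal{S}$, then so does the third.

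I would treat $\mathcal{T}'^{\perp}$ first; the argument for $^{\perp}\!\mathcal{T}'$ is dual. For closure under shifts, let $K\in \mathcal{T}'^{\perp}$ and $A\in \mathcal{T}'$. Since $\mathcal{T}'$ is itself triangulated, $\susp{-1}A\in \mathcal{T}'$, and by the adjunction $(\susp{1},\susp{-1})$ in $\mathcal{T}$ we have
\[
\Hom_{\mathcal{T}}(A,\susp{1}K)\cong \Hom_{\mathcal{T}}(\susp{-1}A,K)=0.
\]
Hence $\susp{1}K\in \mathcal{T}'^{\perp}$, and the same argument with $\susp{1}A\in \mathcal{T}'$ shows $\susp{-1}K\in \mathcal{T}'^{\perp}$.

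For closure under extensions, take a distinguished triangle $K_{1}\rightarrow K_{2}\rightarrow K_{3}\rightarrow \susp{1}K_{1}$ in $\mathcal{T}$ with two of the $K_{i}$ in $\mathcal{T}'^{\perp}$. For every $A\in \mathcal{T}'$, applying the cohomological functor $\Hom_{\mathcal{T}}(A,-)$ yields a long exact sequence
\[
\cdots \rightarrow \Hom_{\mathcal{T}}(A,\susp{n}K_{1})\rightarrow \Hom_{\mathcal{T}}(A,\susp{n}K_{2})\rightarrow \Hom_{\mathcal{T}}(A,\susp{n}K_{3})\rightarrow \Hom_{\mathcal{T}}(A,\susp{n+1}K_{1})\rightarrow \cdots
\]
Combined with the closure under shifts already established, two out of every three consecutive terms vanish, forcing the remaining term to vanish as well. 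Thus the third vertex also lies in $\mathcal{T}'^{\perp}$.

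The argument for $^{\perp}\!\mathcal{T}'$ is identical after replacing $\Hom_{\mathcal{T}}(A,-)$ by the contravariant cohomological functor $\Hom_{\mathcal{T}}(-,A)$ and using $\susp{\pm 1}A\in \mathcal{T}'$ to move shifts across. I do not anticipate any real obstacle: the statement is a routine verification, and the only thing to be careful about is applying the adjunction in the correct direction so that shifts of test objects in $\mathcal{T}'$ are re-absorbed into $\mathcal{T}'$ via the assumption that $\mathcal{T}'$ is itself triangulated.
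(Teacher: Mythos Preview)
Your proof is correct and follows the same approach as the paper: the paper's proof is a terse one-liner observing that the result ``follows immediately from the fact that the functor $\Hom_{\mathcal T}(A,-)$ (resp.\ $\Hom_{\mathcal T}(-,A)$) is homological (resp.\ cohomological),'' and you have simply unpacked that statement into its constituent checks on shifts and extensions.
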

\begin{proof}
	By definition $\mathcal T'^{\perp}$ and $^{\perp}\! \mathcal T'$ are full subcategories of $\mathcal T$.  Thus, it suffices to
	check that they are triangulated.  But this
	follows immediately from the fact that the functor $\Hom _{\mathcal T}(A,-)$ (resp. $\Hom _{\mathcal T}(-,A)$)
	is homological (resp. cohomological) for every $A\in \mathcal T$ (see \cite[Def. 1.1.7]{MR1812507}).
\end{proof}

\subsubsection{Presentation of orthogonal subcategories}  \label{subsubsec.presorth}

	From now on we will assume that $\generators$ in \eqref{eq.localizing.gen} is a set.  Our goal is to find a presentacion
	for $Loc(\generators)^{\perp}$.  Let 
	 $\mathbf L _{\generators}\mathcal A$ denote the left Bousfield localization  of
	 $\mathcal A$ with respect to the set of maps:
	 \begin{align} \label{eq.stable.maps}
		M_{\generators} =\{ \susp{n}G\rightarrow \ast: G\in \generators, n\in \mathbb Z \}
	\end{align}

\begin{rmk}  \label{rmk.obj.cof}
	Taking $F=U=id$ in
	\cite[3.3.20.(1)(b)]{MR1944041} we can assume that the objects $\susp{n}G$ appearing as domains in
	$M_{\generators}$ are cofibrant in $\mathcal A$.
\end{rmk}
	
	Let $L$ be a functorial fibrant replacement in $\mathbf L _{\generators}\mathcal A$.
	Recall that $\mathcal A$ is a simplicial Quillen model category with cofibrant replacement functor $Q$.  

\begin{prop}  \label{prop.leftloctriang}
	The homotopy category of $\mathbf L _{\generators}\mathcal A$, $\homotcat(\mathbf L _{\generators}\mathcal A)$
	is a triangulated category and the Quillen adjunction 
	$(id, id, \varphi):\mathcal A \rightarrow \mathbf L _{\generators}\mathcal A$ induces an adjunction between triangulated functors:
	\begin{align*}
		(Q,L,\varphi ):\mathcal T = \homotcat (\mathcal A) \rightarrow \homotcat (\mathbf L _{\generators}\mathcal A)
	\end{align*}
\end{prop}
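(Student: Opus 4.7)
The plan is to mirror the proof of Proposition \ref{prop.homot.triang}, substituting left Bousfield localization for right Bousfield localization. There are two points to verify: first, that $\mathbf L _{\generators}\mathcal A$ is a stable Quillen model category (so that its homotopy category inherits a triangulated structure); and second, that the identity Quillen adjunction $\mathcal A \rightleftarrows \mathbf L _{\generators}\mathcal A$ descends to a triangulated adjunction between the homotopy categories.

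For stability, I would observe that the set of maps $M_{\generators}$ in \eqref{eq.stable.maps} is invariant under the suspension functor $\susp{1}$ induced by smashing with $S^{1}$: up to reindexing the integer $n$, $\susp{1}$ carries $M_{\generators}$ to itself. Consequently, the Quillen criterion \cite[3.3.20(1)(b)]{MR1944041} applies and guarantees that the Quillen adjunction $(S^{1},\Omega _{S^{1}},\varphi):\mathcal A \to \mathcal A$ descends to a Quillen adjunction $\mathbf L _{\generators}\mathcal A \to \mathbf L _{\generators}\mathcal A$, which is a Quillen equivalence since the original one is (both adjoint pairs are between identical underlying categories with only the weak equivalences enlarged). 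Hence $\mathbf L _{\generators}\mathcal A$ is stable in the sense of \cite[Ch. 7]{MR1650134}, and its homotopy category $\homotcat(\mathbf L _{\generators}\mathcal A)$ is therefore triangulated.

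For the adjunction, the definition of left Bousfield localization makes $id:\mathcal A \to \mathbf L _{\generators}\mathcal A$ a left Quillen functor, so the Quillen adjunction $(id,id,\varphi)$ has total derived functors given by $(Q,L,\varphi)$ from $\homotcat(\mathcal A)$ to $\homotcat(\mathbf L _{\generators}\mathcal A)$, where $Q$ is applied to source objects to obtain a cofibrant replacement in $\mathcal A$ and $L$ is the fibrant replacement in the localized structure. Finally, \cite[Props. 6.4.1 and 7.1.12]{MR1650134} apply verbatim in this setting, yielding that both $Q$ and $L$ are triangulated functors between the stable homotopy categories.

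There is no real obstacle here: the whole argument is a direct transcription of the proof of \ref{prop.homot.triang}, and the only thing to check is the stability of $\susp{1}$ on $M_{\generators}$, which is immediate from the fact that $M_{\generators}$ is indexed over all $n \in \mathbb Z$. Note that \ref{rmk.obj.cof} is not needed for this proposition, although it is implicitly consistent with invoking 3.3.20(1)(b).
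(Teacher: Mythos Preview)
Your proof is correct and follows essentially the same approach as the paper: both use the invariance $M_{\generators}=\susp{1}(M_{\generators})$ together with \cite[3.3.20(1)(b)]{MR1944041} to obtain stability of $\mathbf L _{\generators}\mathcal A$, and then invoke \cite[Props. 6.4.1 and 7.1.12]{MR1650134} for the triangulated adjunction.
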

\begin{proof}
	By construction,
	the suspension functor $\susp{1}$ of $\homotcat (\mathcal A)$ is induced by the Quillen adjunction
	$(S^{1},\Omega _{S^{1}},\varphi):\mathcal A \rightarrow \mathcal A$.  Since $M_{\generators}=\susp{1}(M_{\generators})$,
	it follows from \cite[3.3.20(1)(b)]{MR1944041} that
	$(S^{1},\Omega _{S^{1}},\varphi):\mathbf L _{\generators}\mathcal A \rightarrow \mathbf L _{\generators}\mathcal A$
	is a Quillen equivalence.  Thus, $\mathbf L _{\generators}\mathcal A$ is a stable model category and
	its homotopy category $\homotcat(\mathbf L _{\generators}\mathcal A)$ is a triangulated category.
	
	Since the identity functor $id: \mathcal A\rightarrow \mathbf L _{\generators}\mathcal A$ is a left Quillen functor,
	we conclude that $(Q,L,\varphi )$ is an adjunction.  By \cite[Props. 6.4.1 and 7.1.12]{MR1650134} we
	deduce that $Q$ and $L$ are triangulated functors.
\end{proof}

\begin{exam}  \label{exam.invmaps}
	Let $W$ be a set consisting of maps in $\mathcal A$. 
	Let 
	 $\mathbf L _{W}\mathcal A$ denote the left Bousfield localization  of
	 $\mathcal A$ with respect to the set of maps:
	 \begin{align*} 
		sW=\{ \susp{n}f: f\in W, n\in \mathbb Z\}
	\end{align*}
	$\mathbf L _{W}\mathcal A$ admits a description as in \ref{subsubsec.presorth}, \ref{eq.stable.maps}.  For this,
	consider the set of objects $\generators _{W}$ in $\mathcal A$ which complete the maps in $W$ to cofibre sequences in $\mathcal A$ 
	(distinguished triangles in $\homotcat (\mathcal A)$).  Namely, there is a bijection of sets $W\rightarrow \generators _{W}$, $f\mapsto G_{f}$
	such that for every $f:E\rightarrow F$ in $W$, $G_{f}$ fits in a cofibre sequence in $\mathcal A$:
	$E \rightarrow F \rightarrow G_{f}$.  Let $\mathbf L _{\generators _{W}}\mathcal A$
	be the left Bousfield localization of $\mathcal A$ with respect to the set of maps $M_{\generators _{W}}$ (see \ref{eq.stable.maps}).
\end{exam}

	Let $L$ be a functorial fibrant replacement in $\mathbf L _{W}\mathcal A$.
	Recall that $\mathcal A$ is a simplicial Quillen model category with cofibrant replacement functor $Q$.  

\begin{lem}  \label{lem.leftmapstriang}
	\begin{enumerate}
		\item \label{lem.leftmapstriang.a}  The homotopy category of $\mathbf L _{W}\mathcal A$, $\homotcat(\mathbf L _{W}\mathcal A)$
	is a triangulated category and the Quillen adjunction 
	$(id, id, \varphi):\mathcal A \rightarrow \mathbf L _{W}\mathcal A$ induces an adjunction between triangulated functors:
	\begin{align*}
		(Q,L,\varphi ):\mathcal T = \homotcat (\mathcal A) \rightarrow \homotcat(\mathbf L _{W}\mathcal A)
	\end{align*}
		\item \label{lem.leftmapstriang.d}  Let $G\in \generators _{W}$ and $\susp{n}G\rightarrow \ast$ a map in $M_{\generators _{W}}$ 
		(see \ref{eq.stable.maps}).  Then $\susp{n}G\rightarrow \ast$ becomes an isomorphism in 
		$\homotcat (\mathbf L _{W}\mathcal A)$.
		\item \label{lem.leftmapstriang.b} Let $G\in \generators _{W}$.  Then the map $G\rightarrow \ast$ is an isomorphism
		in $\homotcat (\mathbf L _{\generators _{W}}\mathcal A)$.
		\item \label{lem.leftmapstriang.c} Let $f$ be a map in $sW$ (see \ref{exam.invmaps}).  
		Then $f$ becomes an isomorphism in 
		$\homotcat (\mathbf L _{\generators _{W}}\mathcal A)$.
	\end{enumerate}
\end{lem}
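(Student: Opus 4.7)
The plan is to handle the four items in the order stated, drawing on the template set by Proposition \ref{prop.leftloctriang} and on the standard fact that cofibre sequences in $\mathcal{A}$ descend to distinguished triangles in the homotopy category of any left Bousfield localization. For part \eqref{lem.leftmapstriang.a}, I would simply reproduce the proof of \ref{prop.leftloctriang} verbatim with $M_{\generators}$ replaced by $sW$. Since $sW = \susp{1}(sW)$ by construction, \cite[3.3.20(1)(b)]{MR1944041} implies that the Quillen adjunction $(S^{1}, \Omega_{S^{1}}, \varphi)$ descends to a Quillen equivalence on $\mathbf{L}_{W}\mathcal{A}$, so that model category is stable and $\homotcat(\mathbf{L}_{W}\mathcal{A})$ inherits a triangulation. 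The identity is a left Quillen functor $\mathcal{A}\rightarrow \mathbf{L}_{W}\mathcal{A}$ by the very definition of a left Bousfield localization, and \cite[Props. 6.4.1 and 7.1.12]{MR1650134} then upgrades $Q$ and $L$ to triangulated functors.

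For \eqref{lem.leftmapstriang.d}, the maps in $sW$ are by construction weak equivalences in $\mathbf{L}_{W}\mathcal{A}$, hence isomorphisms in $\homotcat(\mathbf{L}_{W}\mathcal{A})$. Given $f:E\rightarrow F$ in $W$ with associated cofibre sequence $E\rightarrow F\rightarrow G_{f}$ in $\mathcal{A}$, this sequence descends to a distinguished triangle $\susp{n}E \rightarrow \susp{n}F \rightarrow \susp{n}G_{f} \rightarrow \susp{n+1}E$ in the triangulation supplied by \eqref{lem.leftmapstriang.a}. Since $\susp{n}f$ is an isomorphism in that triangulated category, its cone $\susp{n}G_{f}$ is forced to be isomorphic to $\ast$, which is precisely the statement that $\susp{n}G_{f}\rightarrow \ast$ is an isomorphism there.

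Part \eqref{lem.leftmapstriang.b} is a direct unwinding of definitions: every map in $M_{\generators _{W}}$ is a weak equivalence in $\mathbf{L}_{\generators _{W}}\mathcal{A}$, so each $\susp{n}G\rightarrow \ast$ with $G\in \generators _{W}$ becomes an isomorphism in $\homotcat(\mathbf{L}_{\generators _{W}}\mathcal{A})$, and taking $n=0$ yields the assertion. For \eqref{lem.leftmapstriang.c} I would reverse the argument used for \eqref{lem.leftmapstriang.d}: within the triangulated category $\homotcat(\mathbf{L}_{\generators _{W}}\mathcal{A})$ the distinguished triangle coming from $E\rightarrow F\rightarrow G_{f}$ has third vertex $\susp{n}G_{f}\cong \ast$ by \eqref{lem.leftmapstriang.b}, which forces the two-out-of-three for isomorphisms to make $\susp{n}f$ an isomorphism for every $n\in \mathbb{Z}$.

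The only mildly delicate point is justifying that a cofibre sequence in $\mathcal{A}$ remains a distinguished triangle in $\homotcat(\mathbf{L}_{W}\mathcal{A})$ and in $\homotcat(\mathbf{L}_{\generators _{W}}\mathcal{A})$; but this is automatic once \eqref{lem.leftmapstriang.a} is in hand, because both localizations share the cofibrations of $\mathcal{A}$ and their suspension functors are induced by the same simplicial smash with $S^{1}$. Beyond this bookkeeping I do not anticipate a genuine obstacle.
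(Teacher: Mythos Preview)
Your proposal is correct and follows essentially the same route as the paper: part \eqref{lem.leftmapstriang.a} is handled exactly as in Proposition~\ref{prop.leftloctriang}, parts \eqref{lem.leftmapstriang.d} and \eqref{lem.leftmapstriang.c} are the two symmetric cone arguments using the distinguished triangle $E\to F\to G_f$ together with the fact that the localized maps become isomorphisms, and part \eqref{lem.leftmapstriang.b} is the immediate consequence of the definition of $\mathbf L_{\generators_W}\mathcal A$. The only cosmetic difference is that the paper first reduces $\susp{n}f$ to $f$ via the triangulated structure before running the cone argument, whereas you keep the suspension throughout; this makes no substantive difference.
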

\begin{proof}
 	\eqref{lem.leftmapstriang.a}: It follows using the same argument as in \ref{prop.leftloctriang}.
	
	\eqref{lem.leftmapstriang.d} By \ref{lem.leftmapstriang}\eqref{lem.leftmapstriang.a} it suffices to show that $G\rightarrow \ast$
	becomes an isomorphism in $\homotcat (\mathbf L _{W}\mathcal A)$.  By construction of $\generators _{W}$, 
	there exists a map $f:E\rightarrow F$ in $W$, such that $\xymatrix{E\ar[r]^-{f} &F\ar[r]&G}$ is a distinguished triangle in
	$\homotcat (\mathcal A )$.  Thus, by \ref{lem.leftmapstriang}\eqref{lem.leftmapstriang.a} it suffices to check
	that $f$ becomes an isomorphism in $\homotcat (\mathbf L _{W}\mathcal A)$.  This follows directly from the definition of 
	$\mathbf L _{W}\mathcal A$ and the universal property of left Bousfield localizations \cite[3.1.1.(1)(a) and 3.3.19.(1)]{MR1944041}.
	
	\eqref{lem.leftmapstriang.b}: This follows directly from the construction of $\mathbf L _{\generators _{W}}\mathcal A$ and
	\cite[3.1.1.(1)(a) and 3.3.19.(1)]{MR1944041}.
	
	\eqref{lem.leftmapstriang.c}: By \ref{prop.leftloctriang} we can assume that $f$ is in $W$ (see \ref{exam.invmaps}).  By construction
	there is a distinguished triangle in $\homotcat (\mathcal A)$ with $G\in \generators_{W}$:
			$\xymatrix{E\ar[r]^-{f}&F\ar[r]&G}$.
	Combining \ref{lem.leftmapstriang}\eqref{lem.leftmapstriang.b}  and \ref{prop.leftloctriang} we deduce that
	$f$ becomes an isomorphism in $\homotcat (\mathbf L_{\generators_{W}}\mathcal A)$.
\end{proof}

\begin{prop}  \label{prop.invmaps}
	The identity functor:
		\[	(id, id, \varphi): \mathbf L _{W}\mathcal A \rightarrow \mathbf L _{\generators _{W}}\mathcal A
		\]
	is a Quillen equivalence.
\end{prop}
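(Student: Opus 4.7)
The plan is to apply the universal property of left Bousfield localization \cite[\S 3.3]{MR1944041} to exhibit the identity as a left Quillen functor in both directions between $\mathbf L _{W}\mathcal A$ and $\mathbf L _{\generators _{W}}\mathcal A$, and then observe that this forces the two model structures on $\mathcal A$ to coincide.

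First I would verify that $id: \mathbf L _{W}\mathcal A \rightarrow \mathbf L _{\generators _{W}}\mathcal A$ is a left Quillen functor. By the universal property of $\mathbf L _{W}\mathcal A$ (see \cite[Thm.\,3.3.18]{MR1944041}), and since the identity $\mathcal A \rightarrow \mathbf L _{\generators _{W}}\mathcal A$ is already a left Quillen functor, it suffices to check that every map in $sW$ becomes a weak equivalence in $\mathbf L _{\generators _{W}}\mathcal A$. This is exactly lemma \ref{lem.leftmapstriang}\eqref{lem.leftmapstriang.c}.

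Symmetrically, applying the same universal property to $\mathbf L _{\generators _{W}}\mathcal A$, the identity $id: \mathbf L _{\generators _{W}}\mathcal A \rightarrow \mathbf L _{W}\mathcal A$ is a left Quillen functor once we know that each map $\susp{n}G \rightarrow \ast$ in $M_{\generators _{W}}$ is a weak equivalence in $\mathbf L _{W}\mathcal A$; this is lemma \ref{lem.leftmapstriang}\eqref{lem.leftmapstriang.d}.

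Having the identity as a left Quillen functor in both directions forces the two model structures to coincide: the cofibrations already agree (being the cofibrations of $\mathcal A$ in each case), and the two opposite inclusions of classes of weak equivalences coming from the two Quillen adjunctions show that the classes of weak equivalences coincide as well. The identity is therefore trivially a Quillen equivalence. The only conceptual point, already handled in \ref{lem.leftmapstriang}, is the dictionary $W \leftrightarrow \generators _{W}$, $f \mapsto G_{f}$ provided by the cofibre sequences $E \rightarrow F \rightarrow G_{f}$, which bridges the two descriptions of what is being inverted; beyond this there is no real obstacle.
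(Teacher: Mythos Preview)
Your proposal is correct and follows essentially the same approach as the paper: use the universal property of left Bousfield localization together with \ref{lem.leftmapstriang}\eqref{lem.leftmapstriang.c} and \eqref{lem.leftmapstriang.d} to get the identity as a left Quillen functor in both directions, and then conclude that the two model structures on $\mathcal A$ coincide. The paper phrases the final step via fibrant objects (citing \cite[9.7.4]{MR1944041}) rather than via the mutual inclusion of weak equivalences you describe, but this is the same argument.
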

\begin{proof}
	Since $\mathbf L_{W}\mathcal A$, $\mathbf L_{\generators_{W}}\mathcal A$ are left Bousfield localizations
	of $\mathcal A$, we deduce that they are simplicial model categories with the same cofibrant 
	replacement functor $Q$.  
	Thus, it suffices to show that they have the same class of
	weak equivalences.  By \cite[9.7.4]{MR1944041}, it is enough to check that the fibrant objects in
	$\mathbf L_{W}\mathcal A$ and $\mathbf L_{\generators_{W}}\mathcal A$ are the same.
	
	 For this, it suffices to show \cite[3.1.6.(c) and 4.1.1.(2)]{MR1944041} that the identity functor
	 $id: \mathbf L_{W}\mathcal A\rightarrow \mathbf L _{\generators_{W}}\mathcal A$ (resp.
	 $id: \mathbf L _{\generators_{W}}\mathcal A \rightarrow \mathbf L_{W}\mathcal A$) is  a left Quillen functor.  We
	consider the following diagram, where the arrows are left Quillen functors:
		\[  \xymatrix{\mathcal A \ar[r]^-{id} \ar[d]_-{id}& \mathbf L_{\generators_{W}}\mathcal A \\
						\mathbf L_{W}\mathcal A &}
		\]
	By \ref{lem.leftmapstriang}\eqref{lem.leftmapstriang.c} 
	(resp. \ref{lem.leftmapstriang}\eqref{lem.leftmapstriang.d}),
	the maps in $sW$  (resp. $M_{\generators _{W}}$)
	become isomorphisms in $\homotcat (\mathbf L_{\generators_{W}}\mathcal A)$ (resp. $\homotcat (\mathbf L_{W}\mathcal A)$).  Thus,
	the universal property of left Bousfield localizations  
	\cite[3.3.19.(1) and 3.1.1.(1)]{MR1944041} implies that $id: \mathbf L_{W}\mathcal A\rightarrow \mathbf L _{\generators_{W}}\mathcal A$ (resp.
	 $id: \mathbf L _{\generators_{W}}\mathcal A \rightarrow \mathbf L_{W}\mathcal A$) is a left Quillen functor.
\end{proof}

	The next two lemmas characterize the fibrant objects in $\mathbf L _{\generators}\mathcal A$.  Recall that $L$ is a fibrant
	replacement functor in $\mathbf L _{\generators}\mathcal A$.

\begin{lem}  \label{lem.leftobj.orth}
	Let $K$ be an arbitrary object in $\homotcat (\mathbf L _{\generators}\mathcal A)$.  Then $LK$ is in
	$Loc(\generators)^{\perp}$.
\end{lem}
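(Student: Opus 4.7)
The plan is to extract orthogonality for all of $Loc(\generators)$ from the local condition at the generators. The argument has two clean steps: first observe that $LK$ annihilates every shift of every $G\in \generators$ (this is essentially the definition of being fibrant in $\mathbf L _{\generators}\mathcal A$), and then propagate this vanishing along the closure operations that build $Loc(\generators)$.

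First I would verify that for every $G\in \generators$ and every $n\in \mathbb Z$, $\Hom _{\mathcal T}(\susp{n}G, LK)=0$. By \cite[3.4.1(1)(b)]{MR1944041}, $LK$ is fibrant in $\mathcal A$ and is $M_{\generators}$-local, i.e.~for every map $\susp{n}G\rightarrow \ast$ in $M_{\generators}$ the induced map $Map(\ast, LK)\rightarrow Map(\susp{n}G, LK)$ is a weak equivalence of simplicial sets. By \ref{rmk.obj.cof} the objects $\susp{n}G$ may be taken cofibrant, so $Map(\susp{n}G, LK)$ computes the derived mapping space. Since the target $Map(\ast, LK)$ is contractible (because $\mathcal A$ is pointed and $LK$ is fibrant), $Map(\susp{n}G, LK)$ is contractible as well; passing to $\pi_{0}$ and using the standard identification with $\Hom _{\mathcal T}$ (as in the proof of \ref{prop.gens.gen}), we obtain the claimed vanishing.

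Next I would consider the full subcategory
\[ ^{\perp}(LK) \;=\; \{\, A\in \mathcal T \;:\; \Hom _{\mathcal T}(\susp{n}A, LK)=0 \text{ for every } n\in \mathbb Z\,\}. \]
Because the functor $\Hom _{\mathcal T}(-, LK)$ is cohomological and converts coproducts into products, $^{\perp}(LK)$ is closed under shifts, under completing to distinguished triangles, and under arbitrary coproducts; in particular it is a localizing subcategory of $\mathcal T$ (this is the same observation used in \ref{lem.gens.gen}). By the first step, $\generators \subseteq {}^{\perp}(LK)$, hence $Loc(\generators) \subseteq {}^{\perp}(LK)$. Taking $n=0$ yields $\Hom _{\mathcal T}(A, LK)=0$ for every $A\in Loc(\generators)$, i.e.~$LK \in Loc(\generators)^{\perp}$, as required.

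The only step that is not purely formal is the identification of $M_{\generators}$-local fibrant objects with those killing all the homotopy classes $\Hom _{\mathcal T}(\susp{n}G, LK)$. I would expect this to be the main technical point, but it is handled exactly as in the $(\Rightarrow)$ direction of \ref{prop.gens.gen}: combine the fact that $\susp{n}G$ is cofibrant with the simplicial enrichment to identify $\pi_{k}Map(\susp{n}G, LK)$ with $\Hom _{\mathcal T}(\susp{n+k}G, LK)$, and then read off the vanishing from contractibility of the mapping space. Once this translation is in place, the rest of the proof is a routine closure argument on the triangulated side.
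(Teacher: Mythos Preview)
Your proof is correct and follows essentially the same two-step structure as the paper: establish $\Hom_{\mathcal T}(\susp{n}G, LK)=0$ for all generators, then pass to $Loc(\generators)$ via the localizing closure of ${}^{\perp}(LK)$. The only cosmetic difference is in the first step: you argue directly from the mapping-space characterization of $M_{\generators}$-local objects, whereas the paper uses the adjunction $(Q,L,\varphi)$ of \ref{prop.leftloctriang} together with the fact that $\susp{n}G\to\ast$ becomes an isomorphism in $\homotcat(\mathbf L_{\generators}\mathcal A)$ to get $\Hom_{\mathcal T}(\susp{n}G,LK)\cong \Hom_{\homotcat(\mathbf L_{\generators}\mathcal A)}(\susp{n}G,K)=0$.
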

\begin{proof}
	We will assume that the objects $\susp{n}G$ appearing as domains in
	$M_{\generators}$ (see \eqref{eq.stable.maps}) are cofibrant in $\mathcal A$ (see \ref{rmk.obj.cof}).  The universal property
	of left Bousfield localizations \cite[3.3.19.(1) and 3.1.1.(1)]{MR1944041}
	implies that for every $n\in \mathbb Z$ and every $G\in \generators$,
	the map $\susp{n}G\rightarrow \ast$ is an isomorphism in $\homotcat (\mathbf L _{\generators} \mathcal A)$.
	By adjointness: 
	\[\Hom _{\mathcal T}(\susp{n}G,LK)\cong \Hom _{\homotcat (\mathbf L _{\generators} \mathcal A)}(\susp{n}G,K)=0
	\]
	for every $n\in \mathbb Z$ and every $G\in \generators$.  Thus, we conclude that $\generators$ is contained in
	$^{\perp}\! LK$, where $^{\perp}\! LK$ is the full subcategory of $\mathcal T=\homotcat (\mathcal A)$ with objects $A$ such that
	for every $n\in \mathbb Z$, $\Hom _{\mathcal T}(\susp{n}A, LK)=0$.  
	
	We observe that $^{\perp}\! LK$ is a full triangulated subcategory of $\mathcal T$ (see \ref{lem.orttriang})
	and is closed under arbitrary coproducts.  
	Thus, $Loc(\generators)\subseteq 
	^{\perp}\! LK$, i.e. $LK\in Loc(\generators)^{\perp}$.
\end{proof}

\begin{lem}  \label{lem.orth.local}
	Let $K$ be a fibrant object in $\mathcal A$.  Assume that $K$ is in $Loc(\generators)^{\perp}$.  Then
	$K$ is also fibrant in $\mathbf L _{\generators} \mathcal A$.
\end{lem}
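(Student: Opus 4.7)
The plan is to apply the standard characterization of fibrant objects in a left Bousfield localization, cf.\ \cite[3.1.6.(c), 3.4.1, 4.1.1.(2)]{MR1944041}: since $\mathbf L _{\generators}\mathcal A$ is the left Bousfield localization of $\mathcal A$ at the set $M_{\generators}$ from \eqref{eq.stable.maps}, an object is fibrant in $\mathbf L _{\generators}\mathcal A$ iff it is fibrant in $\mathcal A$ (already assumed) and it is $M_{\generators}$-local, meaning that for every map $f:A\rightarrow B$ in $M_{\generators}$ the induced simplicial map $Map(B,K)\rightarrow Map(A,K)$ is a weak equivalence. By \ref{rmk.obj.cof} we may assume each $\susp{n}G$ (the domain of the typical map $\susp{n}G\rightarrow \ast$ in $M_{\generators}$) is cofibrant in $\mathcal A$, so these mapping spaces are Kan complexes. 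Since $Map(\ast ,K)\simeq \ast$, $M_{\generators}$-locality reduces to showing that $Map(\susp{n}G,K)$ is weakly contractible for every $G\in \generators$ and $n\in \mathbb Z$.

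To do this, I would invoke the standard identification available in any pointed simplicial stable model category (cf.\ \cite[Lem.\,6.1.2]{MR1650134} and the fact that $\susp{1}$ is modeled by $S^{1}\wedge(-)$): for cofibrant $\susp{n}G$, fibrant $K$, and $k\geq 0$,
\[	\pi_{k}Map(\susp{n}G,K)\;\cong \;\Hom_{\mathcal T}(\susp{k+n}G,K).
\]
Since $\susp{k+n}G$ belongs to $Loc(\generators)$ for every $k\geq 0$ and every $n\in \mathbb Z$, and $K\in Loc(\generators)^{\perp}$ by hypothesis, every group on the right-hand side vanishes. In particular $\pi_{0}Map(\susp{n}G,K)$ is a one-element set (the null-homotopy class), so the Kan complex $Map(\susp{n}G,K)$ is path-connected, and its higher homotopy groups at the unique basepoint are all trivial. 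Thus $Map(\susp{n}G,K)$ is weakly contractible, $K$ is $M_{\generators}$-local, and hence fibrant in $\mathbf L _{\generators}\mathcal A$.

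The only delicate point is the $\pi_{0}$ step, where one must translate the vanishing of the abelian group $\Hom_{\mathcal T}(\susp{n}G,K)=0$ into the path-connectedness of $Map(\susp{n}G,K)$; this is routine in a pointed simplicial model category, so I do not anticipate any serious obstacle. The remainder is an immediate combination of the orthogonality hypothesis with Hirschhorn's fibrancy criterion.
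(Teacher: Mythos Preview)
Your proposal is correct and follows essentially the same route as the paper: reduce via Hirschhorn's fibrancy criterion to showing that $Map(\susp{n}G,K)$ is weakly contractible, then use the identification $\pi_{k}Map(\susp{n}G,K)\cong \Hom_{\mathcal T}(\susp{n+k}G,K)$ together with the orthogonality hypothesis to kill all homotopy groups, handling $\pi_{0}$ separately. The paper's argument is organized identically, with only minor differences in which Hirschhorn citations are invoked.
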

\begin{proof}
	We will assume that the objects $\susp{n}G$ appearing as domains in
	$M_{\generators}$ (see \eqref{eq.stable.maps}) are cofibrant in $\mathcal A$ (see \ref{rmk.obj.cof}).
	By \cite[3.1.4.(1)(a) and 4.1.1]{MR1944041} it suffices to show that the map:
		$ \ast=Map(\ast, K)\rightarrow Map(\susp{n}G,K),
		$
	is a weak equivalence of simplicial sets for every $n\in \mathbb Z$ and every $G\in \generators$.
	
	Let $\omega$ be a base point for $Map(\susp{n}G,K)$.
	Since $\mathcal A$ is a simplicial model category, we deduce $\pi _{0}Map(\susp{n}G,K)\cong 
	\Hom _{\mathcal T}(\susp{n}G,K)$ which is zero by hypothesis when $G\in \generators$.  Thus, we conclude that
	$Map(\susp{n}G,K)$ has only one connected component, and as a consequence it suffices to show that 
	for $r\geq 1$ the homotopy groups $\pi_{r,\omega} Map(\susp{n}G,K)=\ast$.
	
	By \cite[Lem. 6.1.2]{MR1860878}, we conclude $\pi_{r,\omega} Map(\susp{n}G,K) \cong \Hom _{\mathcal T}(\susp{n+r}G,K)$
	which vanishes by hypothesis.  Hence the result follows.
\end{proof} 

	The following theorem is the main result of this section.

\begin{thm}  \label{thm.presorth}
	Assume that $\generators$ is a set.  Then,  
	$Loc(\generators)^{\perp}$ is naturally equivalent as a triangulated category to the homotopy category 
	$\homotcat (\mathbf L _{\generators}\mathcal A)$.
\end{thm}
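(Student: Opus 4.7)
The plan is to exhibit the equivalence via the Quillen adjunction $(id,id,\varphi):\mathcal A\to \mathbf L_{\generators}\mathcal A$ from \ref{prop.leftloctriang}. Specifically, I would define the comparison functor to be the right derived functor of the identity, i.e.\ $\Phi: \homotcat(\mathbf L_{\generators}\mathcal A)\to \mathcal T$ sending $K$ to its fibrant replacement $LK$ regarded in $\mathcal T=\homotcat(\mathcal A)$. By \ref{prop.leftloctriang}, $\Phi$ is a triangulated functor, and by \ref{lem.leftobj.orth} its image lies in $Loc(\generators)^{\perp}$, so $\Phi$ factors through $\Phi: \homotcat(\mathbf L_{\generators}\mathcal A)\to Loc(\generators)^{\perp}$.

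For essential surjectivity, I would take any $K\in Loc(\generators)^{\perp}$ and form its fibrant replacement $RK$ in $\mathcal A$. Since $RK\cong K$ in $\mathcal T$ and $Loc(\generators)^{\perp}$ is closed under isomorphism (being the right orthogonal of a triangulated subcategory, cf.\ \ref{lem.orttriang}), we still have $RK\in Loc(\generators)^{\perp}$. Now \ref{lem.orth.local} applies and tells us that $RK$ is fibrant in $\mathbf L_{\generators}\mathcal A$; hence $L(RK)\cong RK\cong K$ in $\mathcal T$, showing $\Phi(RK)\cong K$.

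For fully faithfulness, I would exploit that the cofibrations in $\mathbf L_{\generators}\mathcal A$ coincide with those in $\mathcal A$, and likewise the simplicial structure and cofibrant replacement functor $Q$ are common to both model structures. Given $K,K'\in \homotcat(\mathbf L_{\generators}\mathcal A)$, I replace $K$ by $QK$ (cofibrant in both $\mathcal A$ and $\mathbf L_{\generators}\mathcal A$) and $K'$ by $LK'$ (fibrant in $\mathbf L_{\generators}\mathcal A$, hence also fibrant in $\mathcal A$, since fibrations in a left Bousfield localization are a subclass of the original fibrations). Then the simplicial mapping space $Map(QK, LK')$ computes both
\[\Hom_{\homotcat(\mathbf L_{\generators}\mathcal A)}(K,K')\quad\text{and}\quad \Hom_{\mathcal T}(\Phi(K),\Phi(K')),\]
identifying them canonically via the identity on $\pi_0$. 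Since $\Phi$ is already seen to be essentially surjective with target $Loc(\generators)^{\perp}$, this delivers the desired equivalence of triangulated categories.

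The main obstacle is being careful about which model structure computes which hom-set: one must know that fibrant objects in $\mathbf L_{\generators}\mathcal A$ are automatically fibrant in $\mathcal A$ and that cofibrant replacement coincides, both of which are standard properties of left Bousfield localizations (see \cite[3.3.16, 3.1.6, 4.1.1]{MR1944041}) combined with the characterization of fibrant objects in $\mathbf L_{\generators}\mathcal A$ given by \ref{lem.leftobj.orth} and \ref{lem.orth.local}. Once this identification is in place the argument is formal.
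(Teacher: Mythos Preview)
Your proposal is correct and follows essentially the same approach as the paper: define the comparison functor via the fibrant replacement $L$, use \ref{lem.leftobj.orth} to see it lands in $Loc(\generators)^{\perp}$, use \ref{lem.orth.local} for essential surjectivity, and argue full faithfulness from the fact that $L$-local objects are fibrant in $\mathcal A$ so the mapping spaces coincide. The only cosmetic difference is that the paper cites Hirschhorn \cite[3.5.2(1)]{MR1944041} directly for the full faithfulness step, whereas you unpack that argument explicitly.
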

\begin{proof}
	Combining \ref{prop.leftloctriang} and \ref{lem.leftobj.orth}, we deduce that the triangulated functor
	$L: \homotcat (\mathbf L_{\generators} \mathcal A)\rightarrow \mathcal T =\homotcat (\mathcal A)$, factors through
	the inclusion $j:Loc(\generators)^{\perp} \rightarrow \mathcal T$,
	which is also a triangulated functor.  
	
	We will abuse notation and write 
	$L: \homotcat (\mathbf L_{\generators} \mathcal A)\rightarrow Loc(\generators) ^{\perp}$ for the corresponding
	factorization.  By \cite[3.5.2(1)]{MR1944041}, we conclude that $L$ is fully faithful.  Finally, lemma \ref{lem.orth.local}
	implies that $L$ is essentially surjective on objects, and so an equivalence of triangulated categories.
\end{proof}	

\begin{cor}  \label{cor.r-orth.loc}
	Assume that $\generators$ is a set.  Then,  
	$Loc(\generators)^{\perp}$ is a localizing subcategory of $\mathcal T =\homotcat (\mathcal A)$.
\end{cor}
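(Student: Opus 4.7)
The plan has two parts, one for each defining property of a localizing subcategory. The first --- that $Loc(\generators)^{\perp}$ is a full triangulated subcategory of $\mathcal T$ --- is already supplied by Lemma \ref{lem.orttriang}, so no new work is needed there. The remaining task is to check closure under arbitrary small coproducts.

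For coproducts I would appeal to the natural triangulated equivalence $L:\homotcat(\mathbf L_{\generators}\mathcal A) \xrightarrow{\sim} Loc(\generators)^{\perp}$ furnished by Theorem \ref{thm.presorth}, together with the fact, established in Proposition \ref{prop.leftloctriang}, that $\mathbf L_{\generators}\mathcal A$ is a stable Quillen model category. Being the homotopy category of a stable model category, $\homotcat(\mathbf L_{\generators}\mathcal A)$ admits all small coproducts, computed after cofibrant replacement as coproducts in $\mathbf L_{\generators}\mathcal A$; and since a left Bousfield localization does not alter the underlying category or its colimits, these coincide with the coproducts in $\mathcal A$.

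Concretely, for a family $\{K_i\}$ in $Loc(\generators)^{\perp}$, I would choose representatives fibrant-cofibrant in $\mathbf L_{\generators}\mathcal A$ (hence cofibrant in $\mathcal A$), form $\coprod_i K_i$ in $\mathcal A$, and apply the fibrant replacement functor $L$ in $\mathbf L_{\generators}\mathcal A$. By Lemma \ref{lem.leftobj.orth}, $L(\coprod_i K_i)$ lies in $Loc(\generators)^{\perp}$ and, transported back along the equivalence, realizes the coproduct of the $K_i$ in that subcategory.

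The step I expect to demand the most care is verifying that these coproducts agree with the ones inherited from $\mathcal T$, i.e.\ that $\coprod_i K_i$ itself --- not merely $L(\coprod_i K_i)$ --- already belongs to $Loc(\generators)^{\perp}$. For this I would combine the adjunction $(Q, L, \varphi)$ of Proposition \ref{prop.leftloctriang}, whose reflector $Q$ preserves coproducts as a left adjoint, with the full faithfulness of $L$ from the proof of Theorem \ref{thm.presorth}; comparing universal properties then identifies the two candidates in $\mathcal T$ and closes the argument.
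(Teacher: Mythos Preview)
Your approach mirrors the paper's exactly: invoke Lemma~\ref{lem.orttriang} for the triangulated part and Theorem~\ref{thm.presorth} for coproducts. You actually go further than the paper by flagging the real distinction between $Loc(\generators)^{\perp}$ merely \emph{having} coproducts (which the equivalence with $\homotcat(\mathbf L_{\generators}\mathcal A)$ certainly gives) and being \emph{closed under} coproducts taken in $\mathcal T$.

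However, your proposed resolution of that subtlety does not close the gap. The claim that ``comparing universal properties then identifies the two candidates in $\mathcal T$'' is circular: the unit map $\coprod_{\mathcal T} K_i \to LQ(\coprod_{\mathcal T} K_i)$ is an isomorphism in $\mathcal T$ precisely when $\coprod_{\mathcal T} K_i$ is already local, which is exactly the statement in question. Knowing that $Q$ preserves coproducts and that $L$ is fully faithful only tells you the two candidates agree \emph{after applying $Q$}; since $Q$ is not conservative on $\mathcal T$, this does not force agreement in $\mathcal T$. The paper's own one-line proof is equally silent on this point. In every application later in the paper the objects of $\generators$ are compact (they are cofibres of maps between compact generators of $\stablehomotopy$), and then closure under coproducts follows at once from $\Hom_{\mathcal T}(\Sigma^n G,\coprod_i K_i)\cong\bigoplus_i\Hom_{\mathcal T}(\Sigma^n G,K_i)=0$; without such a compactness hypothesis this step genuinely requires more than what either you or the paper supplies.
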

\begin{proof}
	Recall (see \ref{subsec.Quillenmodcats}) that $\mathcal A$ is in particular closed under arbitrary coproducts.  Thus, by
	\ref{thm.presorth} we deduce that $Loc(\generators)^{\perp}$ is closed under arbitrary coproducts.  Therefore, the result
	follows from \ref{lem.orttriang}.
\end{proof}

	We are particularly interested in conditions that guarantee that  $Loc(\generators)^{\perp}$ is a compactly generated
	triangulated category in the sense of Neeman \cite[Def. 1.6]{MR1308405}.
	
	Recall that $L$ is a functorial fibrant replacement in $\mathbf L _{\generators}\mathcal A$.
	
\begin{thm} \label{thm.perpcompgen}
	Assume that $\generators$ is a set and
	that $\mathcal T = \homotcat (\mathcal A)$ is a compactly generated
	triangulated category in the sense of Neeman \cite[Def. 1.6]{MR1308405}, with 
	set of compact generators $\generatorstot$ \cite[Def. 1.8]{MR1308405}.
	Then,  $Loc(\generators)^{\perp}$ is a compactly generated
	triangulated category in the sense of Neeman, with  set of compact generators:
	\[  L\generatorstot = \{LK: K\in \generatorstot \}.
	\]
\end{thm}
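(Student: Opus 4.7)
The three things to establish are: $L\generatorstot$ is a set sitting in $Loc(\generators)^{\perp}$, each $LK$ with $K\in\generatorstot$ is compact in $Loc(\generators)^{\perp}$, and the collection $L\generatorstot$ detects zero in $Loc(\generators)^{\perp}$. The first is immediate from \ref{lem.leftobj.orth} and the fact that $\generatorstot$ is a set. A preliminary observation used repeatedly below is that by \ref{cor.r-orth.loc}, $Loc(\generators)^{\perp}$ is a full subcategory of $\mathcal T$ closed under arbitrary coproducts, so $\Hom$-groups and coproducts computed in $Loc(\generators)^{\perp}$ agree with those in $\mathcal T$.

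The key technical step is the natural isomorphism
\[
\Hom_{\mathcal T}(LK,Y)\;\cong\;\Hom_{\mathcal T}(K,Y),
\qquad K\in\generatorstot,\ Y\in Loc(\generators)^{\perp}.
\]
To establish it I replace $Y$ by its fibrant replacement $RY$ in $\mathcal A$: by \ref{lem.orth.local} this $RY$ is also fibrant in $\mathbf L_{\generators}\mathcal A$, so the natural map $RY\to L(RY)$ is a weak equivalence in $\mathbf L_{\generators}\mathcal A$ between fibrant objects and therefore a weak equivalence in $\mathcal A$, whence $LY\cong Y$ in $\mathcal T$. Chaining the derived adjunction $(Q,L,\varphi)$ of \ref{prop.leftloctriang} with the triangulated equivalence of \ref{thm.presorth} (which is realized by $Z\mapsto LZ$) then produces
\[
\Hom_{\mathcal T}(K,Y)\cong \Hom_{\mathcal T}(K,LY)\cong \Hom_{\homotcat(\mathbf L_{\generators}\mathcal A)}(K,Y)\cong \Hom_{Loc(\generators)^{\perp}}(LK,LY)\cong \Hom_{\mathcal T}(LK,Y).
\]

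From this isomorphism the rest is immediate. For compactness of $LK$, any family $\{X_i\}_{i\in I}$ in $Loc(\generators)^{\perp}$ gives
\[
\Hom_{\mathcal T}\bigl(LK,\textstyle\bigoplus_i X_i\bigr)\cong\Hom_{\mathcal T}\bigl(K,\textstyle\bigoplus_i X_i\bigr)\cong\textstyle\bigoplus_i\Hom_{\mathcal T}(K,X_i)\cong\textstyle\bigoplus_i\Hom_{\mathcal T}(LK,X_i),
\]
where the middle step uses compactness of $K\in\generatorstot$ in $\mathcal T$. For generation, if $Y\in Loc(\generators)^{\perp}$ satisfies $\Hom_{\mathcal T}(\susp{n}LK,Y)=0$ for every $K\in\generatorstot$ and $n\in\mathbb Z$, then since the triangulated equivalence of \ref{thm.presorth} commutes with suspension one has $\susp{n}LK\cong L\susp{n}K$, and the key iso yields $\Hom_{\mathcal T}(\susp{n}K,Y)=0$ for all such $K$ and $n$; as $\generatorstot$ is a set of compact generators of $\mathcal T$, this forces $Y=0$. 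The only delicate point in the whole argument is keeping track of which homotopy category each $\Hom$-group is computed in; once \ref{thm.presorth} is used to identify $\homotcat(\mathbf L_{\generators}\mathcal A)$ with $Loc(\generators)^{\perp}$ via $L$, the verification is essentially formal.
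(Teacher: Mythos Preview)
Your proof is correct and follows essentially the same approach as the paper: both reduce compactness and generation of $LK$ to the corresponding properties of $K$ in $\mathcal T$ via the adjunction $(Q,L,\varphi)$ of \ref{prop.leftloctriang} together with \ref{thm.presorth}, \ref{lem.leftobj.orth}, and \ref{lem.orth.local}. The only difference is packaging: you isolate the natural isomorphism $\Hom_{\mathcal T}(LK,Y)\cong\Hom_{\mathcal T}(K,Y)$ for $Y\in Loc(\generators)^{\perp}$ once and then apply it, whereas the paper works directly in $\homotcat(\mathbf L_{\generators}\mathcal A)$ and unpacks the same adjunction inline in the compactness and generation arguments.
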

\begin{proof}
	By \ref{thm.presorth}, it suffices to show that $\homotcat (\mathbf L_{\generators} \mathcal A)$ is compactly generated
	with generators $\{LK: K\in \generatorstot \}$.  
	
	Let $B\in \homotcat (\mathbf L_{\generators} \mathcal A)$ such that
	for every $K\in \generatorstot$: $\Hom _{\homotcat (\mathbf L_{\generators} \mathcal A)}(LK,B)=0$.  Then $B\cong \ast$ in
	$\homotcat (\mathbf L_{\generators} \mathcal A)$.  In effect, we may assume that $B$ is fibrant in 
	$\mathbf L_{\generators} \mathcal A$, then by adjointness:
	\[	\Hom _{\mathcal T}(K,B) \cong \Hom _{\homotcat (\mathbf L_{\generators} \mathcal A)}(K, B) =
	\Hom _{\homotcat (\mathbf L_{\generators} \mathcal A)}(LK, B)=0			
	\]
	Since $\generatorstot$ is a set of compact generators for $\mathcal T$, we deduce that $B\cong \ast$ in $\mathcal T$.
	By \cite[3.3.3.(1)(a)]{MR1944041} we conclude that $B\cong \ast$ in $\homotcat (\mathbf L_{\generators} \mathcal A)$.
	
	It only remains to show that the objects in $L\generatorstot = \{ LK:K\in \generatorstot \}$ are compact in 
	$\homotcat (\mathbf L_{\generators} \mathcal A)$.
	Let $B_{\lambda}$ be a family of objects in $\homotcat (\mathbf L_{\generators} \mathcal A)$ 
	indexed by a set $\Lambda$.  We need to show:
	\[	\Hom _{\homotcat (\mathbf L_{\generators} \mathcal A)}(LK, \oplus _{\lambda \in \Lambda}B_{\lambda})
			 \cong \oplus _{\lambda \in \Lambda} \Hom _{\homotcat (\mathbf L_{\generators} \mathcal A)} (LK, B_{\lambda})
	\]
	We may assume that $\oplus _{\lambda \in \Lambda}B_{\lambda}$ is fibrant in 
	$\mathbf L_{\generators} \mathcal A$.  By \ref{lem.orth.local}, we deduce that for every $\lambda \in \Lambda$,
	$B_{\lambda}$ is also fibrant in $\mathbf L_{\generators} \mathcal A$.  By adjointness and  the compactness of $K$
	in $\mathcal T$:
	\begin{align*}	
			\Hom _{\homotcat (\mathbf L_{\generators} \mathcal A)}(LK, \oplus _{\lambda \in \Lambda}B_{\lambda})
			& = \Hom _{\homotcat (\mathbf L_{\generators} \mathcal A)}(K, \oplus _{\lambda \in \Lambda}B_{\lambda})
			\cong \Hom _{\mathcal T}(K,  \oplus _{\lambda \in \Lambda}B_{\lambda})\\ &\cong
			\oplus _{\lambda \in \Lambda} \Hom _{\mathcal T} (K, B_{\lambda}) \cong
			\oplus _{\lambda \in \Lambda} \Hom _{\homotcat (\mathbf L_{\generators} \mathcal A)} (K, B_{\lambda}) \\
			& = \oplus _{\lambda \in \Lambda} \Hom _{\homotcat (\mathbf L_{\generators} \mathcal A)} (LK, B_{\lambda})
	\end{align*}
	Hence the result follows.
\end{proof}

	We obtain the following interesting corollaries, where
	$L$ is a functorial fibrant replacement in $\mathbf L _{\generators}\mathcal A$.
	
\begin{cor}  \label{cor.perp.loc}
	Assume that $\generators$ is a set and
	that $\mathcal T = \homotcat (\mathcal A)$ is a compactly generated
	triangulated category in the sense of Neeman \cite[Def. 1.6]{MR1308405}, with 
	set of compact generators $\generatorstot$ \cite[Def. 1.8]{MR1308405}.
	Then,  $Loc(\generators)^{\perp}=Loc(L\generatorstot)$ (see \ref{eq.localizing.gen}).
\end{cor}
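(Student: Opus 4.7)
The plan is to prove the two inclusions $Loc(L\generatorstot)\subseteq Loc(\generators)^{\perp}$ and $Loc(\generators)^{\perp}\subseteq Loc(L\generatorstot)$ separately, using the results already established in this section.

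For the easy inclusion $Loc(L\generatorstot)\subseteq Loc(\generators)^{\perp}$, I would observe that Lemma \ref{lem.leftobj.orth} implies that for every $K\in\generatorstot$ the object $LK$ lies in $Loc(\generators)^{\perp}$, so $L\generatorstot\subseteq Loc(\generators)^{\perp}$. By Corollary \ref{cor.r-orth.loc}, $Loc(\generators)^{\perp}$ is itself a localizing subcategory of $\mathcal T$. Since $Loc(L\generatorstot)$ is by definition the smallest localizing subcategory of $\mathcal T$ containing $L\generatorstot$, the inclusion follows immediately.

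For the reverse inclusion, the key input is Theorem \ref{thm.perpcompgen}, which tells us that $Loc(\generators)^{\perp}$ is compactly generated (in the sense of Neeman) with set of compact generators $L\generatorstot$. Given $X\in Loc(\generators)^{\perp}$, I want to exhibit $X$ as an object of $Loc(L\generatorstot)$. By Neeman's version of Brown representability applied to the compactly generated category $Loc(\generators)^{\perp}$, the inclusion $j:Loc(L\generatorstot)\hookrightarrow Loc(\generators)^{\perp}$ admits a right adjoint $r$. Completing the counit $j\circ r(X)\to X$ to a distinguished triangle $j\circ r(X)\to X\to K$ in $Loc(\generators)^{\perp}$ and applying $\Hom(\susp{n}LG,-)$ for $LG\in L\generatorstot$ and $n\in\mathbb Z$, the adjunction together with the fullness of $j$ forces $\Hom_{Loc(\generators)^{\perp}}(\susp{n}LG,K)=0$. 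An argument parallel to Lemma \ref{lem.gens.gen}, now taking place inside $Loc(\generators)^{\perp}$ with $L\generatorstot$ playing the role of $\generators$, then forces $K\cong\ast$, so $X\cong j\circ r(X)\in Loc(L\generatorstot)$ as required.

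The proof is essentially formal; the one point needing care is re-running the argument of Lemma \ref{lem.gens.gen} inside $Loc(\generators)^{\perp}$ rather than $\mathcal T$. This is unproblematic, as that argument only uses that the ambient category is triangulated and closed under arbitrary coproducts (inherited by $Loc(\generators)^{\perp}$ from Corollary \ref{cor.r-orth.loc}) and that the generators detect triviality, which is exactly the compactness supplied by Theorem \ref{thm.perpcompgen}. Thus no genuine obstacle arises, and the corollary falls out as a direct consequence of the compactly-generated machinery set up in \ref{thm.presorth} and \ref{thm.perpcompgen}.
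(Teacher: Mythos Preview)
Your proof is correct and rests on the same key input as the paper's, namely Theorem \ref{thm.perpcompgen}. The paper's own proof is a one-liner: it combines \ref{thm.perpcompgen} with Neeman's \cite[Thm.~2.1(2.1.2)]{MR1308405}, which states that a compactly generated triangulated category equals the localizing subcategory generated by any set of compact generators. What you have done in your ``reverse inclusion'' paragraph is essentially to reprove that theorem of Neeman inside $Loc(\generators)^{\perp}$, so your argument is a spelled-out version of the same route rather than a genuinely different one.

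One small point of exposition: when you invoke Brown representability to obtain a right adjoint to $j:Loc(L\generatorstot)\hookrightarrow Loc(\generators)^{\perp}$, the hypothesis required is that the \emph{source} $Loc(L\generatorstot)$ be compactly generated, not the target. This is true (the objects of $L\generatorstot$ are compact in $Loc(\generators)^{\perp}$ by \ref{thm.perpcompgen}, hence compact in the full coproduct-closed subcategory $Loc(L\generatorstot)$, and they generate by \ref{lem.gens.gen}), but you should say so rather than pointing to the compact generation of $Loc(\generators)^{\perp}$.
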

\begin{proof}
	It follows from \ref{thm.perpcompgen}  and \cite[Thm. 2.1(2.1.2)]{MR1308405}.
\end{proof}	
	
	With the notation of \ref{subsubsec.presloc}, \ref{eq.stable.generators}
	let $\mathbf{R}_{L\generatorstot}\mathcal A$ be the right Bousfield localization  of
	 $\mathcal A$ with respect to the set:
		$\{ \susp{n}LK: K\in \generatorstot , n\in \mathbb Z \}$.

\begin{cor} \label{cor.perp.rightbous}
	Assume that $\generators$ is a set and
	that $\mathcal T = \homotcat (\mathcal A)$ is a compactly generated
	triangulated category in the sense of Neeman \cite[Def. 1.6]{MR1308405}, with 
	set of compact generators $\generatorstot$ \cite[Def. 1.8]{MR1308405}.
	Then,  $Loc(\generators)^{\perp}$ is naturally equivalent as a triangulated category to the homotopy category 
	$\homotcat (\mathbf R _{L\generatorstot}\mathcal A)$.
\end{cor}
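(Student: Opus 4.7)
The plan is to combine Corollary \ref{cor.perp.loc} with Theorem \ref{thm.preslocsub} applied to the generating set $L\generatorstot$. First, by Corollary \ref{cor.perp.loc} we have the identification
\[
	Loc(\generators)^{\perp} = Loc(L\generatorstot)
\]
as triangulated subcategories of $\mathcal T = \homotcat(\mathcal A)$. So the goal reduces to showing that $Loc(L\generatorstot)$ is naturally equivalent, as a triangulated category, to $\homotcat(\mathbf R_{L\generatorstot}\mathcal A)$.

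To invoke Theorem \ref{thm.preslocsub} with the set of generators $L\generatorstot$, I must verify two hypotheses: that $L\generatorstot$ is a \emph{set}, and that each of its objects is compact in $Loc(L\generatorstot)$ in the sense of Neeman. The first is immediate since $\generatorstot$ is a set by hypothesis and $L$ is a functor. For the second, Theorem \ref{thm.perpcompgen} already tells us that $L\generatorstot$ is a set of compact generators of $Loc(\generators)^{\perp}$ in the sense of Neeman; combined with the identification $Loc(L\generatorstot)=Loc(\generators)^{\perp}$ obtained above, this gives exactly that each $LK$ is compact in $Loc(L\generatorstot)$.

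With both hypotheses in place, Theorem \ref{thm.preslocsub} applied to the set $L\generatorstot$ yields a natural equivalence of triangulated categories
\[
	Loc(L\generatorstot) \;\simeq\; \homotcat(\mathbf R_{L\generatorstot}\mathcal A),
\]
and chaining this with $Loc(\generators)^{\perp} = Loc(L\generatorstot)$ delivers the desired equivalence.

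The argument is therefore essentially a bookkeeping step assembling results already established. The only point that needs a moment of care is the compatibility of the notion of compactness: an object compact in the ambient category $\mathcal T$ need not be compact in an arbitrary localizing subcategory, but compactness in $Loc(\generators)^{\perp}$ (which is what Theorem \ref{thm.perpcompgen} provides) is exactly compactness in $Loc(L\generatorstot)$ since the two categories coincide. I do not anticipate any genuine obstacle beyond invoking this identification correctly.
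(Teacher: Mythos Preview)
Your proof is correct and follows exactly the same route as the paper: first invoke Corollary~\ref{cor.perp.loc} to identify $Loc(\generators)^{\perp}=Loc(L\generatorstot)$, then use Theorem~\ref{thm.perpcompgen} to supply the compactness hypothesis needed to apply Theorem~\ref{thm.preslocsub} with generating set $L\generatorstot$. The paper's proof is a two-line sketch citing precisely these three results; you have simply spelled out the bookkeeping in full.
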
	
\begin{proof}
	By \ref{cor.perp.loc},  $Loc(\generators)^{\perp}=Loc(L\generatorstot)$.  
	Hence, the result follows from \ref{thm.perpcompgen} and \ref{thm.preslocsub}.
\end{proof}

	Combining \ref{thm.presorth} and \ref{cor.perp.rightbous} we obtain two different presentations for $Loc(\generators)^{\perp}$, 
	one as a left Bousfield localization and the other as a right Bousfield localization:
	
\begin{cor}  \label{cor.perp.2prestns}
	Assume that $\generators$ is a set and
	that $\mathcal T = \homotcat (\mathcal A)$ is a compactly generated
	triangulated category in the sense of Neeman \cite[Def. 1.6]{MR1308405}, with 
	set of compact generators $\generatorstot$ \cite[Def. 1.8]{MR1308405}.
	Then,  
		\[	Loc(\generators)^{\perp} \cong \homotcat (\mathbf L _{\generators}\mathcal A) \cong
			\homotcat (\mathbf R _{L\generatorstot}\mathcal A)
		\]
	are naturally equivalent as triangulated categories. 
\end{cor}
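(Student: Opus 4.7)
The statement to be proved is essentially a bookkeeping consequence of results already established in the excerpt, so my plan is to simply concatenate the two presentations of $Loc(\generators)^{\perp}$ that have already been produced. The first equivalence, namely $Loc(\generators)^{\perp} \cong \homotcat(\mathbf{L}_{\generators}\mathcal{A})$, is exactly the content of Theorem \ref{thm.presorth}, which requires only that $\generators$ is a set (already part of the hypotheses). So the first isomorphism needs no further argument.

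For the second equivalence, $Loc(\generators)^{\perp} \cong \homotcat(\mathbf{R}_{L\generatorstot}\mathcal{A})$, the plan is to apply Corollary \ref{cor.perp.rightbous} verbatim. That corollary assumes exactly the two hypotheses we are given (namely that $\generators$ is a set and that $\mathcal{T}$ is compactly generated in the sense of Neeman with set of compact generators $\generatorstot$), and it produces the second stated equivalence. Internally, Corollary \ref{cor.perp.rightbous} itself rests on Corollary \ref{cor.perp.loc} (which rewrites $Loc(\generators)^{\perp}$ as $Loc(L\generatorstot)$ via Theorem \ref{thm.perpcompgen} and Neeman's theorem) together with Theorem \ref{thm.preslocsub} applied to the compact generators $L\generatorstot$, but I would not re-derive any of this.

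Hence the proof reduces to a single line: combine Theorem \ref{thm.presorth} with Corollary \ref{cor.perp.rightbous}. There is no genuine obstacle here; the only subtlety worth mentioning explicitly is that the compactness hypothesis needed to invoke Theorem \ref{thm.preslocsub} inside Corollary \ref{cor.perp.rightbous} is supplied by Theorem \ref{thm.perpcompgen}, which asserts precisely that $L\generatorstot$ consists of compact objects of $Loc(\generators)^{\perp}$. Thus the presentation as a right Bousfield localization is legitimate, and the two equivalences fit together via their common identification with $Loc(\generators)^{\perp}$.
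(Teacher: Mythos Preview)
Your proposal is correct and matches the paper's approach exactly: the paper states this corollary without proof, treating it as an immediate combination of Theorem \ref{thm.presorth} and Corollary \ref{cor.perp.rightbous}. Your additional remark that the compactness of the objects in $L\generatorstot$ (needed to invoke Theorem \ref{thm.preslocsub}) is supplied by Theorem \ref{thm.perpcompgen} is accurate and already implicit in the proof of Corollary \ref{cor.perp.rightbous}.
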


\section{Weakly Birational Coverings and the Weakly  Birational Tower}  \label{sec.birational}

In \cite{MR3035769} we introduced the birational (resp. weakly birational) motivic stable homotopy
categories.  In this section, after recalling their definition we show that we can apply to them the formalism
of \S 2 and then use this formalism to construct the (weakly) birational coverings 
and the birational tower in the Morel-Voevodsky
motivic stable homotopy category.

Recall that $\Tspectra$ is Jardine's category of symmetric $T$-spectra on $\unsmot$ 
equipped with the motivic model structure  
\cite[theorem 4.15]{MR1787949}.  We will write $\stablehomotopy$ for its homotopy category, which is triangulated.
		
\subsection{Birational motivic stable homotopy categories}

	We will use Jardine's notation \cite[p. 506-507]{MR1787949}.
	Namely, let $F_{n}$ denote the left adjoint to the $n$-evaluation functor:
		\[ \xymatrix@R=0.5pt{\Tspectra \ar[r]^-{ev_{n}}& \mathcal M \\
					(E^{m})_{m\geq 0} \ar@{|->}[r]& E^{n}}
		\]
	Notice that $F_{0}(A)$ is just the usual infinite suspension spectrum $\Sigma _{T}^{\infty}A$.
	The sphere spectrum $\sphere$ is $F_{0}(S^{0})$.
	
	\subsubsection{Generators for $\stablehomotopy$}  \label{subsubsec.genSH}
	
	Recall that $\stablehomotopy$ is a compactly generated triangulated category in the sense of Neeman \cite[Def. 1.7]{MR1308405}
	with set of compact generators \cite[Prop. 3.1.5]{MR2807904}:
		\begin{align}  \label{gens.SH}
			\generators _{\stablehomotopy}=\{ F_{n}(S^{p}\wedge \Gm^{q}\wedge U_{+}): U\in Sm_{X}; n,p,q\geq 0 \}.
		\end{align}  	
	where $U_{+}$ denotes the simplicial presheaf represented by $U$ with a disjoint base point.
	
\begin{rmk}  \label{desusp.sh}
	The desuspension functor $\susp{-1}:\stablehomotopy \rightarrow \stablehomotopy$ can be written in this notation as
	$E\mapsto F_{1}(\Gm \wedge E)$.  If $q>0$, we will write $\Gm ^{-q}\wedge E$ for $F_{q}(S^{q}\wedge E)$.
\end{rmk}
		
\subsubsection{Codimension} \label{def.codimension}
	Let $Y\in Sch_{X}$, and $Z$ a closed subscheme of $Y$.  The \emph{codimension}
	of $Z$ in $Y$ (see \cite[section 7.5]{MR0338129}), $codim_{Y}Z$ is the infimum (over the generic points $z_{i}$ of $Z$)
	of the dimensions of the local rings $\mathcal O _{Y, z_{i}}$. 

	Since $X$ is Noetherian of finite Krull dimension and $Y$ is of finite type over $X$, $codim_{Y}Z$ is always finite.
		
\subsubsection{Open immersions}  \label{def.localizing-maps}
	Let  $n\geq 0$ be an integer, and consider the following set of 
	open immersions which have a closed complement of codimension at least $n+1$
\begin{align*}
	B_{n}=\{ \iota _{U,Y}:& U\rightarrow Y \text{ open immersion } |\\														
											 & Y\in Sm_{X}; Y \text{ irreducible};
									     (codim_{Y}Y\backslash U)\geq n+1
	\}
\end{align*}
	
\subsubsection{Open immersions with smooth complement} \label{def.localizing-maps2}
	Let  $n\geq 0$, and consider the following set of 
	open immersions with smooth closed complement	of codimension at least $n+1$
\begin{align*}
	WB_{n}=\{ \iota _{U,Y}: &U\rightarrow Y \text{ open immersion } |\\
									& Y, Z=Y\backslash U \in Sm_{X}; Y \text{ irreducible};
									      (codim_{Y}Z)\geq n+1
	\}
\end{align*}	
	Now we will study the left Bousfield localizations of $\Tspectra$ with respect to a suitable set of
	maps induced by the families of open immersions $B_{n}$, $WB_{n}$ described above.
	
\subsubsection{First presentation} \label{def.localizationAmod-Birat}
	Let $n\in \mathbb Z$ be an arbitrary integer.
	\begin{enumerate}
		\item \label{def.localizationAmod-Birat.a}  
		We will write $\Tspectranbirat{n}$ (resp. $\Tspectranwbirat{n}$) for the
				left Bousfield localization of $\Tspectra$ 
				with respect to the set of maps:
					\begin{align*}	sB_{n} & =\{ F_{p}(\Gm ^{b}\wedge \iota _{U,Y}): b, p, r\geq 0, 
							b-p\geq n-r; \iota _{U,Y}\in B_{r} \} \\
							(\text{resp. } sWB_{n} & =\{ F_{p}(\Gm ^{b}\wedge \iota _{U,Y}): b, p, r\geq 0, 
								b-p\geq n-r; \iota _{U,Y}\in WB_{r} \}).
					\end{align*}
		\item	\label{def.localizationAmod-Birat.b}  
		We will write $b^{(n)}$ (resp. $wb^{(n)}$) for a fibrant replacement functor  in $\Tspectranbirat{n}$ (resp. $\Tspectranwbirat{n}$)
					and $\stablehomotopynbirat{n}$ (resp. $\stablehomotopynwbirat{n}$) for its associated homotopy category. 
	\end{enumerate}
	For $n\neq 0$ we will call $\stablehomotopynbirat{n}$ (resp. $\stablehomotopynwbirat{n}$) 
	the codimension $(n+1)$-birational motivic stable homotopy
	category (resp. codimension $(n+1)$-weakly birational motivic stable homotopy category), 
	and for $n=0$ we will call it the birational motivic stable homotopy category
	(resp. weakly birational motivic stable homotopy category).

\subsubsection{Second presentation}  \label{def.locbirat-gens}
	\begin{enumerate}
		\item	\label{def.locbirat-gens.a}  Let $\generators _{B_{n}}$ (resp. $\generators _{WB_{n}}$) 
				be the set of objects in $\Tspectra$
				which complete the maps in $sB_{n}$ (resp. $sWB_{n}$) to cofibre sequences in $\Tspectra$, i.e. distinguished triangles
				in $\stablehomotopy$.  Namely, there is a bijection of sets 
				$sB_{n}\rightarrow \generators _{B_{n}}$ (resp. $sWB_{n}\rightarrow \generators _{WB_{n}}$), $f\mapsto G_{f}$
				such that for every $f:E\rightarrow F$ in $sB_{n}$ (resp. $sWB_{n}$), 
				$G_{f}$ fits in a cofibre sequence in $\Tspectra$: $E \rightarrow F \rightarrow G_{f}$.
		\item \label{def.locbirat-gens.b} Let $\mathbf L _{\generators _{B_{n}}}\Tspectra$ 
				(resp. $\mathbf L _{\generators _{WB_{n}}}\Tspectra$) be the left Bousfield localization
				of $\Tspectra$ with respect to the set of maps $M_{\generators _{B_{n}}}$ (resp. $M_{\generators _{WB_{n}}}$),
				see \ref{eq.stable.maps}.
		\item \label{def.locbirat-gens.c} We will write $Loc(\generators _{B_{n}})$ (resp. $Loc(\generators _{WB_{n}})$) 
				for the localizing subcategory of $\stablehomotopy$ generated
				by $\generators _{B_{n}}$ (resp. $\generators _{WB_{n}}$).
		\item \label{def.locbirat-gens.d} We will write $B_{n}^{\perp}$ (resp. $WB_{n}^{\perp}$) for 
				$Loc(\generators _{B_{n}})^{\perp}$ (resp. $Loc(\generators _{WB_{n}})^{\perp}$) in $\stablehomotopy$.
	\end{enumerate}

\begin{thm}  \label{thm.newpres.birat.a}
	The identity functor:
		\begin{align*}
			id & : \Tspectranbirat{n} \rightarrow \mathbf L _{\generators _{B_{n}}}\Tspectra \\
			(\text{resp. } id & : \Tspectranwbirat{n} \rightarrow \mathbf L _{\generators _{WB_{n}}}\Tspectra).
		\end{align*} 
	is a Quillen equivalence.
\end{thm}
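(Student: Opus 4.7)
The approach is to apply Proposition \ref{prop.invmaps} directly, in the framework of Example \ref{exam.invmaps}, taking $\mathcal A = \Tspectra$ and $W = sB_n$ (resp.\ $W = sWB_n$). With this choice, $\mathbf L_{W}\Tspectra$ is by definition the left Bousfield localization of $\Tspectra$ with respect to the suspension closure $s(sB_n) = \{\susp{k}f : f \in sB_n,\ k \in \mathbb Z\}$, and $\generators_{W}$ is by definition the set of cones of the maps in $sB_n$, which matches $\generators_{B_{n}}$ from \ref{def.locbirat-gens}\eqref{def.locbirat-gens.a}.

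The substantive point to verify is that $\Tspectranbirat{n}$ (localized at $sB_n$) coincides as a Quillen model category with $\mathbf L_{W}\Tspectra$ (localized at $s(sB_n)$). It suffices to show that every map in $s(sB_n)$ is already a weak equivalence in $\Tspectranbirat{n}$. For negative shifts, the plan is to use the explicit formula $\susp{-1}F_{p}(\Gm^{b}\wedge \iota_{U,Y}) \simeq F_{p+1}(\Gm^{b+1}\wedge \iota_{U,Y})$ coming from Remark \ref{desusp.sh}: the defining inequality $b-p \geq n-r$ for $sB_n$ is preserved under $(b,p) \mapsto (b+1, p+1)$, so $\susp{-1}$ maps $sB_n$ into itself up to weak equivalence in $\Tspectra$, and the claim follows by iteration. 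For positive shifts, the plan is to check that $\susp{1}$ is a left Quillen endofunctor of $\Tspectranbirat{n}$; via \cite[3.3.18]{MR1944041} this reduces to verifying that $\Omega_{S^1}$ sends $sB_n$-local fibrant objects to themselves, which is immediate from the identity $[\susp{k}G, \Omega_{S^1} X] \cong [\susp{k+1}G, X]$ for $G \in \generators_{B_{n}}$ and $k \geq 0$.

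Once this identification is established, Proposition \ref{prop.invmaps} provides the Quillen equivalence $\mathbf L_{W}\Tspectra \to \mathbf L_{\generators_{B_{n}}}\Tspectra$, which in view of the previous step is precisely the required Quillen equivalence $\Tspectranbirat{n} \to \mathbf L_{\generators_{B_{n}}}\Tspectra$. The \emph{resp.} statement follows by the identical argument with $WB_n$ in place of $B_n$ throughout. The only real obstacle is the stability verification in the second paragraph, but this is a routine calculation using the stability of $\Tspectra$ together with the explicit formula from Remark \ref{desusp.sh}; the remainder of the proof is formal from the machinery of Section \ref{sec.locsubcats}.
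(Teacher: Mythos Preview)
Your proposal is correct and follows essentially the same approach as the paper: reduce to Proposition~\ref{prop.invmaps} via Example~\ref{exam.invmaps} with $W=sB_n$, after verifying that localizing $\Tspectra$ at $sB_n$ agrees with localizing at the full suspension closure $s(sB_n)$. The paper handles this verification by citing \cite[Lemma~2.5]{MR3035769} together with the universal property of left Bousfield localization, whereas you supply the argument directly---using the formula $\susp{-1}F_{p}(\Gm^{b}\wedge \iota)\simeq F_{p+1}(\Gm^{b+1}\wedge \iota)$ from Remark~\ref{desusp.sh} for negative shifts, and checking that $\Omega_{S^{1}}$ preserves $sB_n$-local fibrant objects for positive shifts; both routes establish the same closure property.
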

\begin{proof}
	We just need to consider the first claim,
	the proof being exactly the same in the second case.
	By \ref{exam.invmaps} and \ref{prop.invmaps}, 
	it suffices to check that $\Tspectranbirat{n}$ (see \ref{def.localizationAmod-Birat}) is also 
	the left Bousfield localization of $\Tspectra$ with respect to the set of maps $\{ \susp{r}f: f\in sB_{n}; r\geq 0\}$.  
	This follows directly from
	Lemma 2.5 in \cite{MR3035769} and the universal property of left Bousfield localizations  \cite[3.1.1.(1)(a) and 3.3.19.(1)]{MR1944041}.
\end{proof}

\subsubsection{Third presentation}  \label{def.rightpres.birat}	

	Recall that $b^{(n)}$ (resp. $wb^{(n)}$) is a fibrant replacement functor in
	$\Tspectranbirat{n}$ (resp. $\Tspectranwbirat{n}$).  By \ref{thm.newpres.birat.a}, $b^{(n)}$ (resp. $wb^{(n)}$)
	is also a fibrant replacement functor in $\mathbf L _{\generators _{B_{n}}}\Tspectra$ 
	(resp. $\mathbf L _{\generators _{WB_{n}}}\Tspectra$).
	We will write $\mathbf{R}_{b^{(n)}\generators _{\stablehomotopy}}\Tspectra$ 
	(resp. $\mathbf{R}_{wb^{(n)}\generators _{\stablehomotopy}}\Tspectra$) for the right Bousfield localization  of
	 $\Tspectra$ with respect to the set (see \ref{gens.SH} and \ref{subsubsec.presloc}, \ref{eq.stable.generators}):
	 \begin{align*} 
		\{ \susp{m}(b^{(n)}G) & : G\in \generators _{\stablehomotopy} , m\in \mathbb Z \} \\
		(\text{resp. } \{ \susp{m}(wb^{(n)}G) & : G\in \generators _{\stablehomotopy} , m\in \mathbb Z \}).
	\end{align*}

Recall that $B_{n}^{\perp}$, $WB_{n}^{\perp}$ is respectively $Loc(\generators _{B_{n}})^{\perp}$
and $Loc(\generators _{WB_{n}})^{\perp}$.

\begin{thm}  \label{thm.newpres.birat.b}
	The following:
	\begin{align*}	
			\stablehomotopynbirat{n} & \cong B_{n}^{\perp}  \cong 
			\homotcat (\mathbf{R}_{b^{(n)}\generators _{\stablehomotopy}}\Tspectra)\\
			(\text{resp. } \stablehomotopynwbirat{n} & \cong WB_{n}^{\perp}  \cong 
			\homotcat (\mathbf{R}_{wb^{(n)}\generators _{\stablehomotopy}}\Tspectra) ).
	\end{align*}
	are naturally equivalent as triangulated categories.
\end{thm}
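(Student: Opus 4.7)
The plan is to apply the general formalism developed in Section~\ref{sec.locsubcats} to the Quillen model category $\mathcal{A} = \Tspectra$. By Theorem~\ref{thm.Tspectra-cellular} and the fact that $\Tspectra$ is a simplicial stable model category, all standing hypotheses of Section~\ref{sec.locsubcats} are met. Moreover, by \ref{subsubsec.genSH} we know that $\stablehomotopy$ is compactly generated in the sense of Neeman with the set of compact generators $\generators_{\stablehomotopy}$ given in \eqref{gens.SH}. Both $\generators_{B_{n}}$ and $\generators_{WB_{n}}$ are sets by construction (see \ref{def.locbirat-gens}), so all the prerequisites of the main corollary \ref{cor.perp.2prestns} are in place.

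I would treat the two cases in parallel; the argument is identical upon replacing $B_{n}$, $b^{(n)}$ by $WB_{n}$, $wb^{(n)}$, so I focus on the first. The first step is to identify $\stablehomotopynbirat{n}$ with $\homotcat(\mathbf{L}_{\generators_{B_{n}}}\Tspectra)$. This is exactly the content of \ref{thm.newpres.birat.a}: the identity functor
\[
    id : \Tspectranbirat{n} \longrightarrow \mathbf{L}_{\generators_{B_{n}}}\Tspectra
\]
is a Quillen equivalence, so it induces a natural equivalence of the two homotopy categories as triangulated categories. In particular, the fibrant replacement functor $b^{(n)}$ of $\Tspectranbirat{n}$ is also a fibrant replacement functor in $\mathbf{L}_{\generators_{B_{n}}}\Tspectra$, which is exactly the functor denoted $L$ in the statement of \ref{cor.perp.2prestns} when applied to $\generators = \generators_{B_{n}}$.

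The second step is to invoke \ref{cor.perp.2prestns} with $\generators = \generators_{B_{n}}$ and $\generatorstot = \generators_{\stablehomotopy}$. This immediately yields the chain of natural equivalences of triangulated categories
\[
    B_{n}^{\perp} = Loc(\generators_{B_{n}})^{\perp}
    \;\cong\; \homotcat(\mathbf{L}_{\generators_{B_{n}}}\Tspectra)
    \;\cong\; \homotcat(\mathbf{R}_{b^{(n)}\generators_{\stablehomotopy}}\Tspectra).
\]
Splicing this with the equivalence $\stablehomotopynbirat{n} \cong \homotcat(\mathbf{L}_{\generators_{B_{n}}}\Tspectra)$ from the previous step delivers all three equivalences claimed.

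I do not expect any genuine obstacle, since Section~\ref{sec.locsubcats} was designed precisely to produce this kind of triple presentation and the input hypotheses have all been verified. The only point requiring care is the compatibility of the fibrant replacement functors: one must check that the $b^{(n)}$ appearing in the third presentation is literally the fibrant replacement required by \ref{cor.perp.2prestns}; as noted above, this is immediate from \ref{thm.newpres.birat.a} since the Quillen equivalence $\Tspectranbirat{n} \to \mathbf{L}_{\generators_{B_{n}}}\Tspectra$ has the same weak equivalences and fibrations on both sides.
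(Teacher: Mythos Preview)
Your proof is correct and follows essentially the same approach as the paper: invoke \ref{thm.newpres.birat.a} to pass from $\stablehomotopynbirat{n}$ to $\homotcat(\mathbf{L}_{\generators_{B_{n}}}\Tspectra)$, then apply \ref{cor.perp.2prestns} using the compact generation of $\stablehomotopy$ from \ref{subsubsec.genSH}. The paper's own proof is a two-line citation of exactly these ingredients; your version is simply more explicit about verifying the standing hypotheses and the compatibility of the fibrant replacement functors, which the paper relegates to \ref{def.rightpres.birat}.
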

\begin{proof}
	We just need to consider the first claim,
	the proof being exactly the same in the second case.
	By \ref{subsubsec.genSH}, $\stablehomotopy$ is compactly generated 
	with set of compact generators $\generators _{SH}$.  Hence the result follows by
	combining \ref{thm.newpres.birat.a} and \ref{cor.perp.2prestns}.
\end{proof}
	
	Recall that $\generators _{\stablehomotopy}$  is a set of compact generators for $\stablehomotopy$
	(see \ref{gens.SH}).
	
\begin{cor}
		\label{cor.ortcp}
	The inclusion (see \ref{def.locbirat-gens}\eqref{def.locbirat-gens.d}):
	\begin{align*} 
		j_{n} & : B_{n}^{\perp}\rightarrow \stablehomotopy \\
	(\text{resp. } j_{n} & : WB_{n}^{\perp}\rightarrow \stablehomotopy)
	\end{align*}
	is a triangulated functor which commutes
	with arbitrary coproducts.  In addition,
	 $B_{n}^{\perp}$ (resp. $WB_{n}^{\perp}$) is a compactly generated triangulated category
	 in the sense of Neeman \cite[Def. 1.7]{MR1308405} with set of compact generators:
	 	\begin{align*}	
			b^{(n)}\generators _{\stablehomotopy} & =\{ \susp{m}(b^{(n)}G): 
			G\in \generators _{\stablehomotopy} , m\in \mathbb Z \}\\
			(\text{resp. } wb^{(n)}\generators _{\stablehomotopy} & =\{ \susp{m}(wb^{(n)}G): G\in 
			\generators _{\stablehomotopy} , m\in \mathbb Z \}).
		\end{align*}
\end{cor}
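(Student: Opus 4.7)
The plan is to assemble the statement from results already proved, applying the general machinery of Section~\ref{sec.locsubcats} to the specific case at hand. I will treat the case of $B_{n}^{\perp}$; the argument for $WB_{n}^{\perp}$ is identical, with $\generators_{B_{n}}$ replaced by $\generators_{WB_{n}}$ and $b^{(n)}$ by $wb^{(n)}$ throughout.

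For the first two assertions, I would observe that by \ref{lem.orttriang}, the subcategory $B_{n}^{\perp}=Loc(\generators_{B_{n}})^{\perp}$ is a full triangulated subcategory of $\stablehomotopy$, so the inclusion $j_{n}$ is automatically a triangulated functor. Since $\generators_{B_{n}}$ is a set (see \ref{def.locbirat-gens}), Corollary~\ref{cor.r-orth.loc} applies to show that $B_{n}^{\perp}$ is a localizing subcategory of $\stablehomotopy$, hence closed under arbitrary coproducts. Therefore coproducts in $B_{n}^{\perp}$ may be computed in $\stablehomotopy$, and $j_{n}$ commutes with them.

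For the compact generation claim, the starting point is \ref{subsubsec.genSH}: $\stablehomotopy$ is itself compactly generated in Neeman's sense with set of compact generators $\generators_{\stablehomotopy}$. Applying \ref{thm.perpcompgen} with $\generators=\generators_{B_{n}}$ and $\generatorstot=\generators_{\stablehomotopy}$, and using \ref{thm.newpres.birat.a} to identify the abstract fibrant replacement $L$ in $\mathbf{L}_{\generators_{B_{n}}}\Tspectra$ with the functor $b^{(n)}$, one concludes that $B_{n}^{\perp}$ is compactly generated with the objects $\{b^{(n)}G:G\in\generators_{\stablehomotopy}\}$ compact. To match the stated generating set $\{\susp{m}(b^{(n)}G):G\in\generators_{\stablehomotopy},\ m\in\mathbb{Z}\}$, which already contains its own suspensions, I would combine \ref{thm.newpres.birat.b} with \ref{cor.hocatcompgen} applied to the right Bousfield localization $\mathbf{R}_{b^{(n)}\generators_{\stablehomotopy}}\Tspectra$; the latter directly produces the suspension-closed family as a set of compact generators in $\homotcat(\mathbf{R}_{b^{(n)}\generators_{\stablehomotopy}}\Tspectra)\cong B_{n}^{\perp}$.

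There is no essential obstacle — every ingredient is already established. The one point requiring mild care is the bookkeeping to verify that $b^{(n)}$, originally introduced as a fibrant replacement in $\Tspectranbirat{n}$, plays the role of the abstract fibrant replacement functor $L$ appearing in the statements of \ref{thm.perpcompgen} and \ref{cor.perp.rightbous}; this identification is immediate from the Quillen equivalence established in \ref{thm.newpres.birat.a}.
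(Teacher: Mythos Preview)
Your proposal is correct and follows essentially the same approach as the paper: both reduce to the general results of \S\ref{sec.locsubcats} applied with $\generators=\generators_{B_n}$ and $\generatorstot=\generators_{\stablehomotopy}$, invoking \ref{thm.perpcompgen} (together with the identification of $b^{(n)}$ as the relevant fibrant replacement via \ref{thm.newpres.birat.a}/\ref{thm.newpres.birat.b}) for compact generation, while the paper simply declares the triangulated-and-coproduct-preserving part ``clear'' where you cite \ref{lem.orttriang} and \ref{cor.r-orth.loc}. Your extra step through \ref{cor.hocatcompgen} to obtain the suspension-closed generating set is fine but not strictly necessary, since suspensions of compact generators are automatically compact generators.
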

\begin{proof}
	We just need to consider the first claim,
	the proof being exactly the same in the second case.
	It is clear that the inclusion $B_{n}^{\perp}\rightarrow \stablehomotopy$ is a triangulated functor and that
	it respects arbitrary coproducts.  By \ref{subsubsec.genSH}, $\stablehomotopy$ is compactly generated with set of compact generators $\generators _{SH}$.
	The compactness of $B_{n}^{\perp}$ follows from
	\ref{thm.newpres.birat.b} and \ref{thm.perpcompgen}.
\end{proof}

\subsubsection{Voevodsky's effective categories} \label{Vov.eff}

For every integer $q\in \mathbb Z$, consider the following family of symmetric $T$-spectra:
		\begin{align}
				\label{eqn.Cqeff}
		 	C^{q}_{\mathit{eff}}=\{ F_{n}(S^{r}\wedge \mathbb G _{m}^{s}\wedge U_{+}) 
				\mid n,r,s \geq 0; s-n\geq q; U\in Sm_{X}\}
		\end{align}
	Voevodsky \cite{MR1977582} defines the $q$-\emph{effective} motivic stable homotopy category 
	$\neffstablehomotopy{q}$ to be $Loc(C^{q}_{\mathit{eff}})$ (see \ref{eq.localizing.gen}), i.e.
	$\neffstablehomotopy{q}$ is the smallest full triangulated subcategory of $\stablehomotopy$
	which contains $C^{q}_{\mathit{eff}}$ and is closed under arbitrary coproducts.  When $q=0$ we will
	simply write $\stablehomotopyeff$ for $\neffstablehomotopy{0}$.
	
\begin{defi}
		\label{def.orth}
	Let $E\in \Tspectra$ be a symmetric $T$-spectrum.		
	We will say that $E$ is \emph{$n$-orthogonal},
	if for all $K\in \neffstablehomotopy{n}$:
		$\Hom _{\stablehomotopy}(K,E)=0$.
	Let $\northogonal{n}$ denote the full subcategory of $\stablehomotopy$ consisting
	of the $n$-orthogonal objects.
\end{defi}

\begin{thm}
		\label{thm.ort=wbirationalTspectra}
	Let $n$ be an arbitrary integer.  Then
	the full triangulated subcategories of $\stablehomotopy$: $WB_{n}^{\perp}$ and
	$\northogonal{n+1}$ are identical (see \ref{def.locbirat-gens}\eqref{def.locbirat-gens.d}).
	In addition, if the base scheme $X=\spec{k}$, with $k$ a perfect field,
	then the full triangulated subcategories of $\stablehomotopy$: $B_{n}^{\perp}$ and
	$\northogonal{n+1}$ are identical.
\end{thm}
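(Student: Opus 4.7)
The strategy is to reduce both equalities to containments between the generating sets of the relevant localizing subcategories, via the characterisation of orthogonality provided by \ref{lem.gens.gen}. For the smooth case the key geometric ingredient is Morel--Voevodsky homotopy purity; the extension to the non-smooth case over a perfect field will be handled by Noetherian induction on the singular locus combined with the octahedral axiom.

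\textbf{The equality $WB_{n}^{\perp}=\northogonal{n+1}$.} By \ref{lem.gens.gen} applied to both $Loc(\generators_{WB_{n}})$ and $\neffstablehomotopy{n+1}=Loc(C^{n+1}_{\mathit{eff}})$, it suffices to establish the two containments $\generators_{WB_{n}}\subseteq \neffstablehomotopy{n+1}$ and $C^{n+1}_{\mathit{eff}}\subseteq Loc(\generators_{WB_{n}})$. For the first, take $\iota_{U,Y}\in WB_{r}$ with smooth complement $Z$ of codimension $r+1$; purity identifies $Y/U$ with the Thom space $\mathrm{Th}(N_{Z/Y})$, and since the Thom space of a rank-$(r+1)$ vector bundle on a smooth scheme lies in $\neffstablehomotopy{r+1}$ (by Zariski descent, reducing to the trivial case $T^{\wedge(r+1)}\wedge Z_{+}$), the cofibre of $F_{p}(\Gm^{b}\wedge \iota_{U,Y})$ lies in $\neffstablehomotopy{b-p+r+1}\subseteq \neffstablehomotopy{n+1}$. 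For the second containment, apply the construction to $\iota\colon \Gm\times V\hookrightarrow \mathbb A^{1}\times V$ (an element of $WB_{0}$ for every $V\in Sm_{X}$), whose cofibre is $T\wedge V_{+}=S^{1}\wedge\Gm\wedge V_{+}$. Then $F_{p}(\Gm^{b}\wedge\iota)$ has cofibre $\susp{1}F_{p}(\Gm^{b+1}\wedge V_{+})\in \generators_{WB_{n}}$ whenever $b-p\geq n$, and since $Loc(\generators_{WB_{n}})$ is closed under shifts this produces every generator $F_{m}(\Gm^{s}\wedge V_{+})$ of $C^{n+1}_{\mathit{eff}}$ with $s\geq 1$; the remaining case $s=0$ (which can only arise when $n<0$) reduces to $s=1$ via the stable equivalence $F_{m}(V_{+})\simeq \susp{1}F_{m+1}(\Gm\wedge V_{+})$.

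\textbf{The equality $B_{n}^{\perp}=\northogonal{n+1}$ over a perfect field.} The inclusion $WB_{n}\subseteq B_{n}$ gives $\generators_{WB_{n}}\subseteq \generators_{B_{n}}$, and hence $B_{n}^{\perp}\subseteq WB_{n}^{\perp}=\northogonal{n+1}$ already; so only the reverse containment $\generators_{B_{n}}\subseteq Loc(\generators_{WB_{n}})$ remains. Fix $\iota_{U,Y}\in B_{r}$ with closed (possibly singular) complement $Z$ of codimension $\geq r+1$, and argue by Noetherian induction on $\dim Z$. Perfectness of $k$ ensures that the smooth locus $Z_{\mathrm{sm}}$ of $Z$ is open and dense in $Z$ with $Z_{\mathrm{sm}}\in Sm_{X}$, and $Z':=Z\setminus Z_{\mathrm{sm}}$ is closed of strictly smaller dimension. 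Setting $Y':=Y\setminus Z'$, the factorisation $U\hookrightarrow Y'\hookrightarrow Y$ together with the octahedral axiom (cf.\ \ref{thm.3.1.octahedral-axiom}) produces a distinguished triangle $Y'/U\to Y/U\to Y/Y'$. Now $\iota_{U,Y'}\in WB_{r}$ (smooth complement $Z_{\mathrm{sm}}$ of codimension $r+1$) and $\iota_{Y',Y}\in B_{r+1}$ (complement $Z'$ of codimension $\geq r+2$); since $b-p\geq n-r\geq n-(r+1)$, applying $F_{p}(\Gm^{b}\wedge -)$ places the cofibre of the first map directly in $\generators_{WB_{n}}$ and the cofibre of the second in $Loc(\generators_{WB_{n}})$ by induction, whence the middle term $F_{p}(\Gm^{b}\wedge Y/U)$ also lies in $Loc(\generators_{WB_{n}})$.

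\textbf{Main obstacle.} The subtlest step is the $B_{n}$-to-$WB_{n}$ reduction: perfectness of $k$ is essential so that regular equals smooth and $Z_{\mathrm{sm}}\in Sm_{X}$, and one must track the codimension shift $r\mapsto r+1$ through the octahedron to ensure the numerical bound defining $sWB_{n}$ remains satisfied at each stage of the induction. Within the smooth case the critical technical input is the $d$-effectivity of Thom spaces of rank-$d$ vector bundles, which is standard but relies on Zariski descent in $\stablehomotopy$ combined with the local triviality of vector bundles.
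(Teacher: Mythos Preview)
Your argument is correct and essentially self-contained, whereas the paper's proof is a brief citation of results from the author's earlier paper \cite{MR3035769}: it identifies both $WB_{n}^{\perp}$ and $\northogonal{n+1}$ with the homotopy category $\stablehomotopynwbirat{n}$ via \ref{thm.newpres.birat.a}, \ref{thm.presorth}, and \cite[Thms.~1.4(2), 3.6; Prop.~3.5]{MR3035769}, and handles the perfect-field case by invoking \cite[Prop.~2.12]{MR3035769}. What you have written is, in effect, a direct reconstruction of the geometric content hidden in those external references: homotopy purity plus Zariski--Mayer--Vietoris to show $\generators_{WB_{n}}\subseteq \neffstablehomotopy{n+1}$, the $\Gm\times V\hookrightarrow \mathbb A^{1}\times V$ trick for the reverse containment, and Noetherian induction on the singular locus for the $B_{n}$ case. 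Your route proves the slightly stronger statement $Loc(\generators_{WB_{n}})=\neffstablehomotopy{n+1}$, from which the equality of orthogonals is immediate; the paper only needs the orthogonals to coincide and achieves that via the Bousfield-localization machinery of \S\ref{sec.locsubcats}.

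Two small cosmetic points: your citation of \ref{lem.gens.gen} is slightly off, since what you actually use is the elementary fact that $Loc(\generators)$ is the smallest localizing subcategory containing $\generators$; and the reference to \ref{thm.3.1.octahedral-axiom} should just be to the octahedral axiom itself, as that theorem is a specific application to the birational tower. Also, the closed complement in $WB_{r}$ has codimension $\geq r+1$ rather than exactly $r+1$, but your purity argument works unchanged since a larger rank only pushes the Thom space deeper into the effective filtration.
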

\begin{proof}
	By \ref{thm.newpres.birat.a} and
	\ref{thm.presorth}, $WB_{n}^{\perp}$ is naturally isomorphic as triangulated category to
	$\homotcat (\Tspectranwbirat{n})=\stablehomotopynwbirat{n}$.  On the other hand, by
	\cite[Thms. 1.4(2) and 3.6]{MR3035769}, $\northogonal{n+1}$ is also naturally isomorphic as triangulated category to
	$\homotcat (\Tspectranwbirat{n})=\stablehomotopynwbirat{n}$.
	
	Then, the result follows combining \ref{lem.leftobj.orth}, \ref{lem.orth.local} and \cite[Prop. 3.5]{MR3035769}.  
	The case of a perfect field follows from Prop. 2.12 in \cite{MR3035769}.
\end{proof}

Recall that for $q>0$, $\Gm^{-q}\wedge E$ is $F_{q}(S^{q}\wedge E)$ (see \ref{desusp.sh}).

\begin{cor}  \label{lem.Gmshift.orth}
	Let $E$ be an arbitrary symmetric $T$-spectrum in $\stablehomotopy$, and
	$n, q$ be arbitrary integers.  Then
	$E$ is in $WB_{n}^{\perp}$ if and only if $\Gm ^{q}\wedge E$ is in $WB_{n+q}^{\perp}$.
\end{cor}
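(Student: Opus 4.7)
The plan is to use Theorem \ref{thm.ort=wbirationalTspectra} to translate the statement into a vanishing condition on Hom-groups out of Voevodsky's effective subcategories, and then to exploit the invertibility of $\Gm$ in $\stablehomotopy$ together with the explicit description of the generating families $C^{m}_{\mathit{eff}}$ from \eqref{eqn.Cqeff}.

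First, by Theorem \ref{thm.ort=wbirationalTspectra} the two conditions $E\in WB_{n}^{\perp}$ and $\Gm^{q}\wedge E\in WB_{n+q}^{\perp}$ translate respectively into $\Hom _{\stablehomotopy}(K,E)=0$ for every $K\in \neffstablehomotopy{n+1}$ and into $\Hom _{\stablehomotopy}(K',\Gm^{q}\wedge E)=0$ for every $K'\in \neffstablehomotopy{n+q+1}$. Since $\Gm$ is invertible in $\stablehomotopy$, smashing with $\Gm^{q}$ is an autoequivalence and we have the adjunction isomorphism
\[ \Hom _{\stablehomotopy}(K',\Gm^{q}\wedge E)\;\cong\;\Hom _{\stablehomotopy}(\Gm^{-q}\wedge K',E) \]
for every $K',E\in\stablehomotopy$. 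So the result will follow once I verify that $\Gm^{-q}\wedge -$ restricts to an equivalence of triangulated subcategories $\neffstablehomotopy{n+q+1}\xrightarrow{\sim}\neffstablehomotopy{n+1}$.

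The heart of the argument is therefore to show that for every $m\in\mathbb Z$, smashing with $\Gm$ induces an equality $\Gm\wedge \neffstablehomotopy{m}=\neffstablehomotopy{m+1}$ inside $\stablehomotopy$ (the claim for arbitrary $q$ then follows by iteration). One inclusion is immediate on generators: applying $\Gm\wedge -$ to $F_{p}(S^{r}\wedge\Gm^{s}\wedge U_{+})\in C^{m}_{\mathit{eff}}$ (so $s-p\geq m$) produces $F_{p}(S^{r}\wedge\Gm^{s+1}\wedge U_{+})\in C^{m+1}_{\mathit{eff}}$; since $\Gm\wedge -$ is a coproduct-preserving triangulated functor the inclusion extends from the generating set to the whole localizing subcategory. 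For the reverse inclusion one must write an arbitrary generator $F_{p}(S^{r}\wedge\Gm^{s}\wedge U_{+})$ of $\neffstablehomotopy{m+1}$ as $\Gm$ smashed with something in $\neffstablehomotopy{m}$; when $s\geq 1$ this is obvious, and when $s=0$ one invokes \ref{desusp.sh} to identify $\Gm^{-1}\wedge F_{p}(S^{r}\wedge U_{+})$ with $F_{p+1}(S^{r+1}\wedge U_{+})$ and checks that its effective index $0-(p+1)=-p-1$ is $\geq m$, which follows from the standing inequality $-p\geq m+1$.

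The main obstacle is exactly this last step, namely establishing $\Gm^{q}\wedge \neffstablehomotopy{n+1}=\neffstablehomotopy{n+q+1}$ uniformly for every integer $q$, in particular for negative $q$: the delicate case is tracking generators whose $\Gm$-weight hits zero, where one cannot simply decrement the exponent of $\Gm$ but must instead repackage using the formula $\Gm^{-1}\wedge F_{p}(A)=F_{p+1}(S^{1}\wedge A)$ from \ref{desusp.sh}. Once this bookkeeping is in place, the adjunction above closes the proof by giving a bijection between the two families of Hom-groups that must vanish.
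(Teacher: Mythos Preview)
Your proof is correct and follows essentially the same approach as the paper: both invoke Theorem \ref{thm.ort=wbirationalTspectra} to identify $WB_{n}^{\perp}$ with $\northogonal{n+1}$, and both reduce to the assertion that the invertible functor $\Gm^{q}\wedge -$ carries $\neffstablehomotopy{n+1}$ onto $\neffstablehomotopy{n+q+1}$. The paper simply asserts this last point, whereas you spell out the generator bookkeeping; note that your case split on $s$ is actually unnecessary, since the uniform identification $\Gm^{-1}\wedge F_{p}(A)\cong F_{p+1}(S^{1}\wedge A)$ already shows that $\Gm^{-1}$ sends every generator of $C^{m+1}_{\mathit{eff}}$ into $C^{m}_{\mathit{eff}}$.
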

\begin{proof}
	By \ref{thm.ort=wbirationalTspectra}, it follows that $WB_{n}^{\perp}$ is equal to 
	$\northogonal{n+1}$, which by definition is $(\neffstablehomotopy{n+1})^{\perp}$ (see \ref{def.orth}).  To conclude,
	we observe that the triangulated functor
	$\Gm ^{q}:\stablehomotopy \rightarrow \stablehomotopy$, $E\mapsto \Gm^{q} \wedge E$;
	admits an inverse ($E\mapsto \Gm^{-q}\wedge E$) and maps $\neffstablehomotopy{n+1}$ surjectively onto $\neffstablehomotopy{n+q+1}$.
\end{proof}

\subsection{The weakly birational tower}  \label{subsec.birattow}  

In this section $q$ will be an arbitrary integer.
By construction there is an inclusion $sWB_{q+1}\subseteq sWB_{q}$ of sets of maps in $\Tspectra$
(see \ref{def.localizing-maps}, \ref{def.localizing-maps2}, 
\ref{def.localizationAmod-Birat}\eqref{def.localizationAmod-Birat.a}),  and hence an inclusion 
$\generators_{WB_{q+1}} \subseteq \generators_{WB_{q}}$  of sets of objects in $\Tspectra$
(see \ref{def.locbirat-gens}\eqref{def.locbirat-gens.a}).

Thus, we deduce that
$Loc(\generators _{WB_{q+1}})\subseteq Loc(\generators _{WB_{q}})$; and as a consequence 
(see \ref{def.locbirat-gens}\eqref{def.locbirat-gens.d}):
\[WB_{q}^{\perp}\subseteq WB_{q+1}^{\perp}.\]  
which are localizing subcategories of $\stablehomotopy$ (see \ref{cor.r-orth.loc} and \ref{def.locbirat-gens}\eqref{def.locbirat-gens.d}).
Therefore,
we obtain the following tower of localizing subcategories in $\stablehomotopy$:
		\begin{align}  \label{eq.birtower}
			\cdots \subseteq WB_{q-1}^{\perp}
			\subseteq WB_{q}^{\perp}
			\subseteq WB_{q+1}^{\perp} \subseteq \cdots
		\end{align}	

\begin{prop}
		\label{prop.adj-ort.b}
	The inclusion,
		$ j_{q}: WB_{q}^{\perp} \rightarrow \stablehomotopy$
	admits a right adjoint:
		\[p_{q}:\stablehomotopy \rightarrow WB_{q}^{\perp},\]
	which is also a triangulated functor.
\end{prop}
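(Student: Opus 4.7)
The plan is to obtain $p_q$ via Neeman's version of Brown representability, using the compact generation of $WB_q^{\perp}$ that has already been established.

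First, I would collect the inputs. By Corollary \ref{cor.ortcp}, the subcategory $WB_q^{\perp}$ is a compactly generated triangulated category in the sense of Neeman, with the explicit set of compact generators $wb^{(q)}\generators _{\stablehomotopy}$; moreover, the same corollary states that the inclusion $j_q: WB_q^{\perp} \to \stablehomotopy$ is a triangulated functor which commutes with arbitrary coproducts. These are exactly the hypotheses of Neeman's Brown representability theorem \cite[Thm.\,4.1]{MR1308405} (the same theorem already invoked in the proof of \ref{thm.preslocsub}).

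Applying Brown representability to $j_q$ then produces a right adjoint $p_q: \stablehomotopy \to WB_q^{\perp}$. Being the right adjoint of a triangulated functor between triangulated categories, $p_q$ is automatically triangulated: one checks in the standard way that it commutes with the suspension (using the adjunction isomorphism and the fact that $j_q$ commutes with $\susp{1}$) and sends distinguished triangles to distinguished triangles (since for any $A \in WB_q^{\perp}$ the functor $\Hom_{WB_q^{\perp}}(A, -) = \Hom_{\stablehomotopy}(j_q A, -)$ is cohomological, forcing $p_q$ to send distinguished triangles to distinguished triangles by the usual argument).

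Alternatively, one could construct $p_q$ concretely as $p_q(E) = wb^{(q)}E$, regarded as an object of $WB_q^{\perp}$ via the equivalence $\homotcat(\mathbf L_{\generators_{WB_q}}\Tspectra) \cong WB_q^{\perp}$ of Theorem \ref{thm.newpres.birat.b} together with Lemma \ref{lem.leftobj.orth}; the required adjunction isomorphism $\Hom_{\stablehomotopy}(j_q A, E) \cong \Hom_{WB_q^{\perp}}(A, p_q E)$ would then follow from the Quillen adjunction $(id,id,\varphi): \Tspectra \to \mathbf L_{\generators_{WB_q}}\Tspectra$ of Proposition \ref{prop.leftloctriang} combined with Lemma \ref{lem.orth.local}, which ensures that objects of $WB_q^{\perp}$ are already fibrant in $\mathbf L_{\generators_{WB_q}}\Tspectra$. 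There is no real obstacle: once compact generation has been proved in \ref{cor.ortcp}, the existence of the right adjoint is formal, and triangulatedness is automatic.
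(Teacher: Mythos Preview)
Your proof is correct and follows essentially the same approach as the paper: invoke Corollary \ref{cor.ortcp} to get compact generation of $WB_q^{\perp}$ and the coproduct-preserving triangulated inclusion $j_q$, then apply Neeman's Brown representability \cite[Thm.\,4.1]{MR1308405} to obtain the right adjoint. The paper's proof is the one-line version of exactly what you wrote; your additional remarks on why $p_q$ is triangulated and the alternative description via $wb^{(q)}$ are correct elaborations but not needed.
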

\begin{proof}
	The result follows by combining \ref{cor.ortcp} and
	theorem 4.1 in \cite{MR1308405}.
\end{proof}

\begin{rmk}
		\label{rmk.3.1.unit=iso}
	Since the inclusion $j_{q}: WB_{q}^{\perp} \rightarrow \stablehomotopy$
					is a full embedding,
					we deduce that the unit of the adjunction $id\stackrel{\tau}{\rightarrow} p_{q}j_{q}$
					is a natural isomorphism.
\end{rmk}
	
\subsubsection{Weakly birational cover} \label{def.birat.cover}		
		We define $wb^{c}_{q}=j_{q}p_{q}$.  Then clearly $wb^{c}_{q} \circ wb^{c}_{q+1}=wb^{c}_{q}$ and
					there exists a canonical natural transformation $wb^{c}_{q}\rightarrow wb^{c}_{q+1}$.
	
The following proposition is well-known.

\begin{prop}
		\label{prop.3.1.counit-properties}
	The counit $wb^{c}_{q}=j_{q}p_{q}\stackrel{\theta_{q}}{\rightarrow} id$ of the adjunction constructed in \ref{prop.adj-ort.b}
	satisfies the following universal property: 
	
	For any symmetric $T$-spectrum $E$ in $\stablehomotopy$ and for any $F\in WB_{q}^{\perp}$,
	the map $\theta ^{E}_{q}: wb^{c}_{q}E \rightarrow E$
	in $\stablehomotopy$ induces an isomorphism of
	abelian groups:
		\[\xymatrix{\Hom _{\stablehomotopy}(F, wb^{c}_{q}E) \ar[r]_-{\cong}^-{\theta ^{E}_{q\ast}}& 
			\Hom _{\stablehomotopy}(F, E) }
		\]
\end{prop}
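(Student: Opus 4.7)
The plan is to deduce the universal property directly from the adjunction $(j_{q},p_{q},\varphi)$ constructed in \ref{prop.adj-ort.b}, using the fact that $j_{q}$ is a full embedding and that the counit of such an adjunction is characterized by the standard triangle identity.

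First, I would note the following chain of identifications for $F\in WB_{q}^{\perp}$ and $E\in \stablehomotopy$:
\[
\Hom_{\stablehomotopy}(F, wb^{c}_{q}E)
 = \Hom_{\stablehomotopy}(j_{q}F, j_{q}p_{q}E)
 \cong \Hom_{WB_{q}^{\perp}}(F, p_{q}E)
 \cong \Hom_{\stablehomotopy}(j_{q}F, E)
 = \Hom_{\stablehomotopy}(F,E),
\]
where the first equality is the definition of $wb^{c}_{q}$, the second isomorphism uses that $j_{q}$ is fully faithful (as the inclusion of a full subcategory, cf.\ \ref{rmk.3.1.unit=iso}), and the third is the adjunction $\Hom_{\stablehomotopy}(j_{q}(-),-)\cong \Hom_{WB_{q}^{\perp}}(-,p_{q}(-))$ from \ref{prop.adj-ort.b}. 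Hence the composite is an isomorphism of abelian groups.

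The remaining step, and the only one requiring care, is to check that this composite isomorphism coincides with postcomposition by the counit $\theta^{E}_{q}:wb^{c}_{q}E\rightarrow E$. This is a general fact about any adjunction $(L,R,\varphi)$ with $L$ fully faithful: the adjunction isomorphism
\[\varphi: \Hom(LF, LRE)\cong \Hom(F, RE)\cong \Hom(LF, E)\]
sends a morphism $g:LF\rightarrow LRE$ to $\theta^{E}\circ g$, where $\theta^{E}=L(\varepsilon^{E})$ is the image under $L$ of the counit $\varepsilon^{E}$. This follows by unwinding the definition of $\varphi$ via the unit and the triangle identity (which, when the unit is a natural isomorphism as in \ref{rmk.3.1.unit=iso}, reduces immediately). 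Taking $L=j_{q}$, $R=p_{q}$ gives exactly $\theta^{E}_{q\ast}$.

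I expect no serious obstacle here; the only point to be careful about is matching the formal inverse of the adjunction isomorphism with the explicit postcomposition by $\theta^{E}_{q}$, which is simply the definition of the counit of an adjunction and does not use any motivic input beyond \ref{prop.adj-ort.b} and \ref{rmk.3.1.unit=iso}.
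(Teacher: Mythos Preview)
Your proof is correct and follows essentially the same approach as the paper: both arguments run the same chain of identifications using the adjunction $(j_{q},p_{q})$ and the full faithfulness of $j_{q}$. Your additional verification that the composite isomorphism is indeed given by postcomposition with the counit $\theta^{E}_{q}$ is a point the paper leaves implicit, but it is the standard triangle-identity fact you cite and requires no further input.
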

\begin{proof}
	If $F\in WB_{q}^{\perp}$, then $\Hom _{\stablehomotopy}(F, E)=\Hom _{\stablehomotopy}(j_{q}F, E)$.
	By adjointness:
		\[  \Hom _{\stablehomotopy}(j_{q}F, E)\cong \Hom _{WB_{q}^{\perp}}(F,p_{q}E).
		\]
	Since $WB_{q}^{\perp}$ is a full subcategory of $\stablehomotopy$, we deduce that:
	\[ \Hom _{WB_{q}^{\perp}}(F,p_{q}E)=
	\Hom _{\stablehomotopy}(j_{q}F, j_{q}p_{q}E)=
	\Hom _{\stablehomotopy}(F, wb^{c}_{q}E).
	\]  
	This finishes the proof.
\end{proof}

\begin{cor}  \label{cor.comp.tool1}
	Let $E$ be an arbitrary symmetric $T$-spectrum in $\stablehomotopy$.  Then, the natural map $\theta ^{E}_{q}: wb^{c}_{q}E\rightarrow E$
	is an isomorphism in $\stablehomotopy$ if and only if $E$ belongs to $WB_{q}^{\perp}$.
\end{cor}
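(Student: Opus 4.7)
The plan is to prove the two implications separately, using Proposition \ref{prop.3.1.counit-properties} as the main tool for the harder direction.

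For the forward direction ($\Rightarrow$), suppose $\theta_{q}^{E}:wb^{c}_{q}E\to E$ is an isomorphism in $\stablehomotopy$. Since $wb^{c}_{q}E=j_{q}p_{q}E$ lies in the essential image of $j_{q}$ and $j_{q}$ is a full embedding of $WB_{q}^{\perp}$ into $\stablehomotopy$, we conclude $E\in WB_{q}^{\perp}$.

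For the reverse direction ($\Leftarrow$), suppose $E\in WB_{q}^{\perp}$. I would first note that $wb^{c}_{q}E=j_{q}p_{q}E$ also lies in $WB_{q}^{\perp}$. Hence \ref{prop.3.1.counit-properties} applies with $F=E$ and with $F=wb^{c}_{q}E$, giving bijections
\[
\theta_{q\ast}^{E}:\Hom_{\stablehomotopy}(E,wb^{c}_{q}E)\xrightarrow{\cong}\Hom_{\stablehomotopy}(E,E),
\]
\[
\theta_{q\ast}^{E}:\Hom_{\stablehomotopy}(wb^{c}_{q}E,wb^{c}_{q}E)\xrightarrow{\cong}\Hom_{\stablehomotopy}(wb^{c}_{q}E,E).
\]
The first bijection produces a unique $s:E\to wb^{c}_{q}E$ with $\theta_{q}^{E}\circ s=\mathrm{id}_{E}$. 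Then
$\theta_{q}^{E}\circ(s\circ\theta_{q}^{E})=\theta_{q}^{E}=\theta_{q}^{E}\circ\mathrm{id}_{wb^{c}_{q}E}$, and the injectivity of the second bijection forces $s\circ\theta_{q}^{E}=\mathrm{id}_{wb^{c}_{q}E}$. Thus $s$ is a two-sided inverse of $\theta_{q}^{E}$, as desired.

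There is no real obstacle here: the argument is the standard categorical fact that, for a fully faithful right adjoint (here $j_{q}$, with right adjoint $p_{q}$), the counit $\theta^{E}_{q}$ is an isomorphism precisely when $E$ lies in the essential image, which coincides with $WB_{q}^{\perp}$ by \ref{def.locbirat-gens}\eqref{def.locbirat-gens.d} and \ref{prop.adj-ort.b}. The only thing to verify carefully is that the universal property of \ref{prop.3.1.counit-properties} is strong enough to manufacture the inverse of $\theta_{q}^{E}$, which it is.
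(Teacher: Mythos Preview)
Your proof is correct and follows essentially the same approach as the paper: both directions rely on the fact that $wb^{c}_{q}E\in WB_{q}^{\perp}$ by construction, and the reverse direction uses Proposition~\ref{prop.3.1.counit-properties}. The paper phrases the $(\Leftarrow)$ argument as a Yoneda-type statement (the map $\theta^{E}_{q}$ induces isomorphisms on $\Hom(F,-)$ for all $F\in WB_{q}^{\perp}$, hence is an isomorphism in the full subcategory $WB_{q}^{\perp}$), whereas you spell out the construction of the inverse explicitly; these are the same argument. One small slip in your closing remark: $j_{q}$ is a fully faithful \emph{left} adjoint (with right adjoint $p_{q}$), not a right adjoint; this does not affect your actual proof.
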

\begin{proof}
	By \ref{cor.r-orth.loc}, we conclude that
	$WB_{q}^{\perp}$ is a localizing subcategory of $\stablehomotopy$.
	
	First, we assume that $\theta ^{E}_{q}$ is an isomorphism in $\stablehomotopy$.  Then,
	by construction 
	$wb^{c}_{q}E$ is in $WB_{q}^{\perp}$ (see \ref{prop.adj-ort.b} and \ref{def.birat.cover}).   
	Hence, we deduce that $E$ is also in $WB_{q}^{\perp}$ since
	it is a localizing subcategory of $\stablehomotopy$.
	
	Finally, we assume that $E$ is in $WB_{q}^{\perp}$ which is a localizing subcategory of $\stablehomotopy$.  Thus,
	$\theta ^{E}_{q}$ is a map in $WB_{q}^{\perp}$ since $wb^{c}_{q}E$ is in
	$WB_{q}^{\perp}$ by construction.  Therefore, by \ref{prop.3.1.counit-properties} we deduce that
	$\theta ^{E}_{q}$ is an isomorphism in $WB_{q}^{\perp}$, and hence an isomorphism in $\stablehomotopy$.
\end{proof}

\begin{thm}
		\label{thm.3.1.slicefiltration}
	There exist triangulated functors:
		\[  \xymatrix{wb_{q+1/q}:\stablehomotopy \ar[r]& \stablehomotopy}
		\]
	together with natural transformations:
		\[ \xymatrix@R=.6pt{\pi _{q+1}:wb^{c}_{q+1} \ar[r]& wb_{q+1/q} \\
								\sigma _{q+1}:wb_{q+1/q} \ar[r]& \susp{1}wb^{c}_{q}}
		\]
	such that for any symmetric $T$-spectrum $E$ in $\stablehomotopy$ the following conditions hold:
	\begin{enumerate}
		\item	\label{thm.3.1.slicefiltration.a} There is a
					distinguished triangle in $\stablehomotopy$:
					\begin{equation}
						\label{equation.3.1.slicefitration}
							\xymatrix{wb^{c}_{q}E \ar[r]& wb^{c}_{q+1}E \ar[r]^-{\pi_{q+1}}& 
							 wb_{q+1/q}E \ar[r]^-{\sigma_{q+1}}& \susp{1}wb^{c}_{q}E}
					\end{equation}
		\item	\label{thm.3.1.slicefiltration.b} $wb_{q+1/q}E$ is in $WB_{q+1}^{\perp}$.
		\item	\label{thm.3.1.slicefiltration.c} $wb_{q+1/q}E$ is in $(WB_{q}^{\perp})^{\perp}$.  Namely,
					for any  $F$ in $WB_{q}^{\perp}$:
					\[ \Hom _{\stablehomotopy}(F, wb_{q+1/q}E)=0.\]  
	\end{enumerate}
\end{thm}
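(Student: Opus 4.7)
The plan is to realize $wb_{q+1/q}$ as the cofiber of the natural transformation $wb^c_q \to wb^c_{q+1}$. I will then read off the distinguished triangle directly from this cofibre sequence, and check (b) and (c) using the orthogonality properties established in \ref{prop.3.1.counit-properties} and the fact that $WB_{q+1}^{\perp}$ is a localizing subcategory.

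First I would construct the natural transformation $wb^c_q \to wb^c_{q+1}$. By the inclusion $WB_q^{\perp} \subseteq WB_{q+1}^{\perp}$ from \eqref{eq.birtower} and the universal property in \ref{prop.3.1.counit-properties} (applied at level $q+1$ to $F = wb^c_q E \in WB_q^{\perp} \subseteq WB_{q+1}^{\perp}$), the counit $\theta^E_q : wb^c_q E \to E$ factors uniquely through $\theta^E_{q+1} : wb^c_{q+1}E \to E$, yielding a natural map $wb^c_q E \to wb^c_{q+1} E$ whose composition with $\theta^E_{q+1}$ is $\theta^E_q$. This is functorial in $E$.

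Next, I would work in the underlying Quillen model category $\Tspectra$. Since $wb^c_q$ and $wb^c_{q+1}$ are realized as triangulated functors coming from the model-categorical constructions in \ref{def.rightpres.birat}, one can choose functorial representatives so that the natural transformation above is induced by a morphism in $\Tspectra$. I would define $wb_{q+1/q}E$ as the (functorial) mapping cone of this morphism, and take $\pi_{q+1}$ and $\sigma_{q+1}$ to be the structural maps of the cofibre sequence. This immediately produces the distinguished triangle \eqref{equation.3.1.slicefitration} of \eqref{thm.3.1.slicefiltration.a} in $\stablehomotopy$.

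For \eqref{thm.3.1.slicefiltration.b}, I would use that $WB_{q+1}^{\perp}$ is a localizing subcategory of $\stablehomotopy$ (\ref{cor.r-orth.loc}), hence closed under cones. By construction $wb^c_{q+1}E \in WB_{q+1}^{\perp}$, and by the inclusion \eqref{eq.birtower} we have $wb^c_q E \in WB_q^{\perp} \subseteq WB_{q+1}^{\perp}$. Thus the cone $wb_{q+1/q}E$ lies in $WB_{q+1}^{\perp}$.

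For \eqref{thm.3.1.slicefiltration.c}, fix $F \in WB_q^{\perp}$ and apply $\Hom_{\stablehomotopy}(F,-)$ to the distinguished triangle of \eqref{thm.3.1.slicefiltration.a}. Since $F \in WB_q^{\perp} \subseteq WB_{q+1}^{\perp}$, \ref{prop.3.1.counit-properties} gives that both $\theta^E_{q\ast}$ and $\theta^E_{q+1\ast}$ are isomorphisms onto $\Hom_{\stablehomotopy}(F,E)$. Because the map $wb^c_q E \to wb^c_{q+1}E$ is compatible with the counits by construction, the induced map
\[
\Hom_{\stablehomotopy}(F, wb^c_q E) \longrightarrow \Hom_{\stablehomotopy}(F, wb^c_{q+1}E)
\]
factors this pair of isomorphisms and is therefore itself an isomorphism. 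The same argument applied to $\susp{1}F \in WB_q^{\perp}$ (which is triangulated) shows the analogous statement one degree up. The long exact sequence then forces $\Hom_{\stablehomotopy}(F, wb_{q+1/q}E)=0$, as required. The main technical point, which I would handle carefully, is ensuring enough functoriality of the mapping cone to make $wb_{q+1/q}$ a bona fide triangulated functor (not just pointwise defined); this is where the fact that $\Tspectra$ is a stable Quillen model category, together with the model-categorical presentation of $wb^c_q$ from \ref{def.rightpres.birat}, is essential.
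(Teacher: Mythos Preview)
Your argument is correct. You construct $wb_{q+1/q}$ explicitly as the cofibre of the comparison map $wb^c_q\to wb^c_{q+1}$, verify \eqref{thm.3.1.slicefiltration.b} using that $WB_{q+1}^{\perp}$ is localizing, and deduce \eqref{thm.3.1.slicefiltration.c} from the long exact sequence together with the universal property \ref{prop.3.1.counit-properties}. You also correctly flag the one genuine subtlety, namely that functoriality of the cone (so that $wb_{q+1/q}$ is a triangulated \emph{functor}) requires the model-categorical lift available from \ref{def.rightpres.birat}.

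The paper takes a different, more black-box route: it simply observes (via \ref{cor.ortcp}) that $WB_q^{\perp}$ and $WB_{q+1}^{\perp}$ are compactly generated and then invokes Neeman's general machinery for Bousfield localization in triangulated categories (Propositions 9.1.8 and 9.1.19 of \cite{MR1812507}), which produce the triangulated functor $wb_{q+1/q}$ together with the distinguished triangle and both orthogonality properties in one stroke. In particular, Neeman's results handle the functoriality-of-cones issue internally to the triangulated setting, without appealing to a model structure. Your approach is more explicit and self-contained, and makes transparent exactly why each of \eqref{thm.3.1.slicefiltration.b} and \eqref{thm.3.1.slicefiltration.c} holds; the paper's approach is shorter and avoids re-deriving standard facts, at the cost of deferring the content to an external reference.
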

\begin{proof}
	By \ref{cor.ortcp},
	the triangulated categories $WB_{q}^{\perp}$ and $WB_{q+1}^{\perp}$
	are compactly generated.  Thus,  
	the result follows from 
	propositions 9.1.19 and 9.1.8 in \cite{MR1812507}.	
\end{proof}

\begin{thm}
		\label{thm.3.1.motivictower}
	There exist triangulated functors:
		\[ \xymatrix{wb_{>q}:\stablehomotopy \ar[r]& \stablehomotopy}
		\]
	together with natural transformations:
		\[ \xymatrix@R=.6pt{\pi _{>q}:id \ar[r]& wb_{>q} \\
								\sigma _{>q}:wb_{>q} \ar[r]& \susp{1}wb^{c}_{q}}
		\]
	such that for any symmetric $T$-spectrum $E$ in $\stablehomotopy$ the following conditions hold:
	\begin{enumerate}
		\item	\label{thm.3.1.motivictower.a} There is a
					distinguished triangle in $\stablehomotopy$:
					\begin{equation}
						\label{equation.3.1.motivictower}
							\xymatrix{wb^{c}_{q}E \ar[r]& E \ar[r]^-{\pi_{>q}}& 
							 wb_{>q}E \ar[r]^-{\sigma_{>q}}& \susp{1}wb^{c}_{q}E}
					\end{equation}
		\item	\label{thm.3.1.motivictower.b} $wb_{>q}E$ is in
					$(WB_{q}^{\perp})^{\perp}$.  Namely,  for any $F$ in $WB_{q}^{\perp}$:
					\[\Hom _{\stablehomotopy}(F,wb_{>q}E)=0.\]  
	\end{enumerate}
\end{thm}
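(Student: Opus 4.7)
The plan is to construct the triangle \eqref{equation.3.1.motivictower} as the Bousfield localization triangle attached to the colocalization $p_{q}$ produced in \ref{prop.adj-ort.b}, using exactly the same Neeman machinery invoked for \ref{thm.3.1.slicefiltration}. By \ref{cor.ortcp} the localizing subcategory $WB_{q}^{\perp}$ of $\stablehomotopy$ is compactly generated, and the inclusion $j_{q}:WB_{q}^{\perp}\rightarrow \stablehomotopy$ commutes with arbitrary coproducts. Together with \ref{prop.adj-ort.b}, this places us exactly in the setup of propositions 9.1.19 and 9.1.8 of \cite{MR1812507}. Those statements, applied to the pair $(j_{q},p_{q})$, produce a triangulated functor $wb_{>q}:\stablehomotopy\rightarrow \stablehomotopy$ fitting together with the counit $\theta _{q}$ into a functorial distinguished triangle
\[ \xymatrix{wb^{c}_{q}E\ar[r]^-{\theta _{q}^{E}}& E\ar[r]^-{\pi _{>q}}& wb_{>q}E\ar[r]^-{\sigma _{>q}}& \susp{1}wb^{c}_{q}E,}\]
together with natural transformations $\pi _{>q}:id\rightarrow wb_{>q}$ and $\sigma _{>q}:wb_{>q}\rightarrow \susp{1}wb^{c}_{q}$. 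This is \eqref{thm.3.1.motivictower.a}.

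It remains to verify \eqref{thm.3.1.motivictower.b}. Fix $F\in WB_{q}^{\perp}$ and $n\in \mathbb Z$. Since $WB_{q}^{\perp}$ is a full triangulated subcategory, $\susp{n}F$ is again in $WB_{q}^{\perp}$, so by \ref{prop.3.1.counit-properties} applied to $\susp{-n}E$ (and using that $\susp{1}$ is an equivalence of $\stablehomotopy$) the counit induces an isomorphism
\[ \theta ^{E}_{q\ast}:\Hom _{\stablehomotopy}(F,\susp{n}wb^{c}_{q}E)\stackrel{\cong}{\longrightarrow}\Hom _{\stablehomotopy}(F,\susp{n}E).\]
Applying the cohomological functor $\Hom _{\stablehomotopy}(F,-)$ to the distinguished triangle of \eqref{thm.3.1.motivictower.a} and feeding in the isomorphisms above for all $n$, the resulting long exact sequence collapses and forces $\Hom _{\stablehomotopy}(F,wb_{>q}E)=0$, which is exactly $wb_{>q}E\in (WB_{q}^{\perp})^{\perp}$.

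The only delicate point is the functoriality of $wb_{>q}$: a priori the cone of $\theta _{q}^{E}$ is only well-defined up to non-canonical isomorphism, so one must be careful to lift the construction from individual triangles to a genuine functor with natural transformations $\pi _{>q}$ and $\sigma _{>q}$. This is precisely what Neeman's framework for compactly generated triangulated categories provides, and it is the same ingredient already used in the proof of \ref{thm.3.1.slicefiltration}; beyond that, every other step is a formal consequence of \ref{prop.adj-ort.b} and \ref{prop.3.1.counit-properties}.
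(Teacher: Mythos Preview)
Your proposal is correct and follows essentially the same route as the paper: both arguments reduce the statement to Neeman's propositions 9.1.19 and 9.1.8 in \cite{MR1812507}, after noting that $WB_{q}^{\perp}$ is compactly generated (\ref{cor.ortcp}). The paper additionally cites \ref{subsubsec.genSH} to record that $\stablehomotopy$ itself is compactly generated, which is part of the standing hypotheses for Neeman's machinery; you should mention this explicitly, but otherwise your argument (including the direct verification of \eqref{thm.3.1.motivictower.b} via \ref{prop.3.1.counit-properties} and the long exact sequence) is just a more detailed unpacking of what the paper leaves implicit.
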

\begin{proof}
	By \ref{subsubsec.genSH} and \ref{cor.ortcp},
	the triangulated categories $\stablehomotopy$ and $WB_{q}^{\perp}$ 
	are compactly generated.  Thus,  
	the result follows from 
	propositions 9.1.19 and 9.1.8 in \cite{MR1812507}.	
\end{proof}

\begin{thm}
		\label{thm.3.1.octahedral-axiom}
	For any symmetric $T$-spectrum $E$ in $\stablehomotopy$,
	there exists the following commutative diagram in $\stablehomotopy$: 
		\[ \xymatrix{wb^{c}_{q}E \ar[rr] \ar@{=}[d] && wb^{c}_{q+1}E \ar[rr]^-{\pi _{q+1}} \ar[d]&& wb_{q+1/q}E 
							\ar[rr]^-{\sigma _{q+1}} \ar[d]&& 
								\susp{1}wb^{c}_{q}E \ar@{=}[d]\\
								wb^{c}_{q}E \ar[rr] \ar[d]&& E \ar[rr]^-{\pi _{>q}} \ar[d]_-{\pi _{>q+1}}&& wb_{>q}E \ar[rr]^-{\sigma _{>q}} 
								\ar[d]&& \susp{1}wb^{c}_{q}E \ar[d]\\
								\ast \ar[rr] \ar[d]&& wb_{>q+1}E \ar@{=}[rr] \ar[d]_-{\sigma _{>q+1}}&& 
								wb_{>q+1}E \ar[rr] \ar[d]&& \ast \ar[d]\\
								\susp{1}wb^{c}_{q}E \ar[rr]&& \susp{1}wb^{c}_{q+1}E \ar[rr]_-{\susp{1}\pi _{q+1}}&&
								\susp{1}wb_{q+1/q}E \ar[rr]_-{\susp{1}\sigma _{q+1}}&& \susp{2}wb^{c}_{q}E}
		\]
	where all the rows and columns are distinguished triangles in $\stablehomotopy$.
\end{thm}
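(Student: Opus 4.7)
The plan is to apply Verdier's octahedral axiom, in its $3\times 3$-diagram form, to the composable pair
\[
wb^{c}_{q} E \longrightarrow wb^{c}_{q+1}E \xrightarrow{\theta^{E}_{q+1}} E,
\]
in which the first arrow is the canonical natural transformation supplied by \ref{def.birat.cover} and the second is the counit from \ref{prop.adj-ort.b}.  The composition equals $\theta^{E}_{q}$: indeed $wb^{c}_{q}E \in WB_{q}^{\perp} \subseteq WB_{q+1}^{\perp}$, so \ref{prop.3.1.counit-properties} applied with $F = wb^{c}_{q}E$ shows that $\theta^{E}_{q+1}$ induces a bijection $\Hom_{\stablehomotopy}(wb^{c}_{q}E, wb^{c}_{q+1}E) \to \Hom_{\stablehomotopy}(wb^{c}_{q}E, E)$, and the canonical arrow is by construction the preimage of $\theta^{E}_{q}$.

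Each of the three arrows now carries a distinguished triangle that is already available: the first from \eqref{equation.3.1.slicefitration} of \ref{thm.3.1.slicefiltration}, with cone $wb_{q+1/q}E$; the second from \eqref{equation.3.1.motivictower} of \ref{thm.3.1.motivictower} at level $q+1$, with cone $wb_{>q+1}E$; and the composition, again by \eqref{equation.3.1.motivictower} at level $q$, with cone $wb_{>q}E$.  These triangles populate rows $1$ and $2$ together with the central column of the asserted diagram.  The two outer columns and the bottom row of the diagram involve only the zero object, and are distinguished for trivial reasons (any triangle of the form $\ast \to X \xrightarrow{id} X \to \ast$ or $X \xrightarrow{id} X \to \ast \to \susp{1} X$ is distinguished).

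The octahedral axiom then yields the remaining distinguished triangle
\[
wb_{q+1/q}E \longrightarrow wb_{>q}E \longrightarrow wb_{>q+1}E \longrightarrow \susp{1} wb_{q+1/q}E,
\]
which constitutes the third column, together with the commutativity of the whole $4\times 4$ array.  The identifications of the three cones produced by the octahedron with $wb_{q+1/q}E$, $wb_{>q+1}E$ and $wb_{>q}E$ are forced by the uniqueness up to isomorphism of the cone of a morphism in a triangulated category.  The only mild bookkeeping obstacle is to match the data produced by the axiom with the precise layout stated; this is the content of the enhanced ($3\times 3$-lemma) form of TR4 as recorded in Neeman \cite[Ch.\,1]{MR1812507}, which delivers exactly the $4\times 4$ commutative diagram whose rows and columns are all distinguished triangles in $\stablehomotopy$.
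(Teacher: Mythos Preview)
Your proof is correct and follows exactly the same approach as the paper: apply the octahedral axiom to the commutative triangle $wb^{c}_{q}E \to wb^{c}_{q+1}E \to E$, using \ref{thm.3.1.slicefiltration} and \ref{thm.3.1.motivictower} to identify the cones. Your version is more detailed (in particular you explain why the composition equals $\theta^{E}_{q}$ and invoke the $3\times 3$-lemma form of TR4 from \cite{MR1812507}), but the argument is identical in substance.
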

\begin{proof}
	The result follows from  \ref{thm.3.1.slicefiltration}, \ref{thm.3.1.motivictower} and
	the octahedral axiom applied to the following commutative diagram:
		\[  \xymatrix{wb^{c}_{q}E \ar[rr] \ar[dr]&& wb^{c}_{q+1}E \ar[dl]\\
								& E &}
		\]
\end{proof}

\subsubsection{The spectral sequence}

By \ref{thm.3.1.octahedral-axiom}, for every symmetric $T$-spectrum $E$ in $\stablehomotopy$ there is a tower in $\stablehomotopy$:

\begin{align}	\label{F.birtow}
	\xymatrix@C=1.5pc{\cdots \ar[r]
					& \ar[dr]|{\theta _{-1}^{E}} wb^{c}_{-1}(E) 
					\ar[r] & \ar[d]|{\theta _{0}^{E}} wb^{c}_{0}(E) \ar[r] 
					& wb^{c}_{1}(E) \ar[r] \ar[dl]|{\theta _{1}^{E}} & \cdots  \\						
					&&  E &&}
\end{align}
We will call \eqref{F.birtow} the weakly birational tower of $E$.

\begin{rmk}
	By \ref{prop.adj-ort.b} and \ref{def.birat.cover}, the weakly birational tower \eqref{F.birtow} is functorial with respect to
	morphisms in $\stablehomotopy$.
\end{rmk}

\begin{thm}  \label{thm.birspecseq}
	Let $G, K$ be arbitrary symmetric $T$-spectra in $\stablehomotopy$.  Then
	there is a spectral sequence of homological type with term $E^{1}_{p,q}=\Hom _{\stablehomotopy}(G, \susp{q-p}wb_{p/p-1}K)$
	and where the abutment is given by the associated graded group for the filtration $F_{\bullet}$ of $\Hom _{\stablehomotopy} (G,K)$ defined
	by the image of $\theta _{p\ast}^{K}:\Hom _{\stablehomotopy}(G, wb^{c}_{p}K)\rightarrow \Hom _{\stablehomotopy}(G, K)$, or
	equivalently the kernel of $\pi_{>q}:Hom _{\stablehomotopy}(G, K)\rightarrow Hom _{\stablehomotopy}(G, wb_{>q}K)$.
\end{thm}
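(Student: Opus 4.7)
The plan is to obtain the spectral sequence via the standard exact-couple construction applied to the weakly birational tower \eqref{F.birtow}.

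First, I would fix $G$ and $K$ and apply the cohomological functor $\Hom_{\stablehomotopy}(G,\susp{q-p}(-))$ to the horizontal distinguished triangle
\[ wb^{c}_{p-1}K \longrightarrow wb^{c}_{p}K \stackrel{\pi_{p}}{\longrightarrow} wb_{p/p-1}K \stackrel{\sigma_{p}}{\longrightarrow} \susp{1}wb^{c}_{p-1}K \]
arising from \ref{thm.3.1.octahedral-axiom}. For each $p$ this yields a long exact sequence, and the collection of these long exact sequences interlocks into a bigraded exact couple with
\[ D^{1}_{p,q}=\Hom_{\stablehomotopy}(G,\susp{q-p}wb^{c}_{p}K), \qquad E^{1}_{p,q}=\Hom_{\stablehomotopy}(G,\susp{q-p}wb_{p/p-1}K), \]
where $i$ is induced by $wb^{c}_{p-1}K\to wb^{c}_{p}K$, $j$ by $\pi_{p}$, and $k$ by $\sigma_{p}$. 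Standard exact-couple machinery then produces the spectral sequence of homological type with the required $E^{1}$-term and differential $d^{1}=jk\colon E^{1}_{p,q}\to E^{1}_{p-1,q}$.

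Next, I would identify the filtration on the abutment. Applying $\Hom_{\stablehomotopy}(G,-)$ to the distinguished triangle
\[ wb^{c}_{p}K \stackrel{\theta_{p}^{K}}{\longrightarrow} K \stackrel{\pi_{>p}}{\longrightarrow} wb_{>p}K \longrightarrow \susp{1}wb^{c}_{p}K \]
from \ref{thm.3.1.motivictower}\eqref{thm.3.1.motivictower.a} yields a long exact sequence in which the image of $\theta_{p\ast}^{K}$ coincides with the kernel of $\pi_{>p\ast}$; this gives the equivalence of the two proposed descriptions of $F_{\bullet}$. The third column of the diagram in \ref{thm.3.1.octahedral-axiom}, namely the distinguished triangle $wb_{p/p-1}K\to wb_{>p-1}K\to wb_{>p}K$, supplies compatibility maps $wb_{>p-1}K\to wb_{>p}K$ making $F_{\bullet}$ an increasing filtration. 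The standard compatibility between the filtration by images arising from a tower and the $E^{\infty}$-page of the associated exact couple then identifies the subquotients $F_{p}/F_{p-1}$ with the appropriate $E^{\infty}$-terms.

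The main subtlety is convergence: since the tower \eqref{F.birtow} is a priori unbounded on both sides, strong convergence would require vanishing of $\lim^{1}$-type obstructions, which is not a general feature. However, the statement asserts only that the abutment is the associated graded of the explicitly described filtration $F_{\bullet}$, which is a definitional claim rather than a genuine convergence statement. Consequently the construction of the exact couple together with the identification of $E^{1}$ and of the filtration $F_{\bullet}$ suffice, with no further analytic input required beyond standard exact-couple bookkeeping.
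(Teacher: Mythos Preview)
Your proposal is correct and follows essentially the same approach as the paper, which simply says ``Since $\stablehomotopy$ is a triangulated category, the result follows from \ref{thm.3.1.octahedral-axiom} and \ref{F.birtow}.'' You have merely unpacked the standard exact-couple construction that this one-line proof invokes, and your remark that the abutment claim is definitional rather than a genuine convergence assertion is an accurate reading of the statement.
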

\begin{proof}
	Since $\stablehomotopy$ is a triangulated category, the result follows from \ref{thm.3.1.octahedral-axiom} and
	\ref{F.birtow}.
\end{proof}

\begin{rmk}  \label{rmk.bi-wbirtss}
	The constructions of this section can be carried out as well using the tower (see \ref{def.locbirat-gens}):
	\begin{align} 
			\cdots \subseteq B_{q-1}^{\perp}
			\subseteq B_{q}^{\perp}
			\subseteq B_{q+1}^{\perp} \subseteq \cdots
		\end{align}
	and it follows from \ref{thm.ort=wbirationalTspectra} that both constructions are canonically isomorphic
	when the base scheme $X$ is of the form $\spec{k}$ with $k$ a perfect field.
\end{rmk}
\section{Rational Coefficients}  \label{Q.coeffs}

The main result of this section is \ref{thm.birtowQloc}, which describes
 the weakly birational tower with rational coefficients. 

\subsection{$\mathbb Q$-local spectra} \label{Q.local}

Recall that a symmetric 
$T$-spectrum $E$ in $\Tspectra$ is \emph{$\mathbb Q$-local} if the abelian group
$\Hom _{\stablehomotopy}(\susp{p}(\Gm ^{q}\wedge Y_{+}),E)$ is a $\mathbb Q$-vector space
for every $Y$ in $Sm_{X}$ and every $p,$ $q$ in $\mathbb Z$  \cite[Remark 4.3.3]{MR2061856},
and that a map $f:E\rightarrow F$ in $\Tspectra$ is a \emph{rational weak equivalence} if
for every $Y$ in $Sm_{X}$ and every $p,$ $q$ in $\mathbb Z$ the induced map
$f_{\ast}\otimes \mathbb Q: \Hom _{\stablehomotopy}(\susp{p}(\Gm ^{q}\wedge Y_{+}),E)\otimes \mathbb Q
\rightarrow \Hom _{\stablehomotopy}(\susp{p}(\Gm ^{q}\wedge Y_{+}),F)\otimes \mathbb Q$
is an isomorphism of $\mathbb Q$-vector spaces \cite[Remark 4.3.3]{MR2061856}.

The work of Morel \cite[\S 5.2 p.246 and Remark 4.3.3]{MR2061856} shows that the left Bousfield localization of 
$\Tspectra$ with respect to the rational weak equivalences exists.
We will write $\Tspectra _{\mathbb Q}$ for this left Bousfield localization, and
 $\stablehomotopy _{\mathbb Q}$ for its homotopy category.  

Recall that $\sphere$ is the sphere spectrum in $\stablehomotopy$.
The rational Moore spectrum $\ratMoore$  
in $\Tspectra$ is defined to be the homotopy colimit of the filtering diagram:
\[	\xymatrix{\sphere \ar[r]^-{2}&\sphere \ar[r]^-{3}&\cdots \ar[r]^-{n}&\sphere \ar[r]^-{n+1}&\cdots}
\]
where $\sphere \stackrel{n}{\rightarrow} \sphere$ is the composition of the sum map with the diagonal
$\sphere \stackrel{\Delta}{\rightarrow} \oplus _{i=1}^{n}\sphere \stackrel{\sum}{\rightarrow} \sphere$.
Let $u:\sphere \rightarrow \ratMoore$ be the canonical map.
A concrete model for a $\mathbb Q$-localization functor is given by $E\mapsto E\wedge \ratMoore$, i.e.
$E\wedge \ratMoore$ is $\mathbb Q$-local in $\stablehomotopy$ and the map 
$E\wedge (\sphere \stackrel{u}{\rightarrow}\ratMoore)$
is a rational weak equivalence.

\begin{lem}  \label{lem.detectQloc}
Let $E$ be a $\mathbb Q$-local symmetric $T$-spectrum in $\stablehomotopy$.  Then
the map $E\wedge u:E\rightarrow E\wedge \ratMoore$ is an isomorphism in $\stablehomotopy$.
\end{lem}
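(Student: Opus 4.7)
The plan is to verify that $E\wedge u$ is an isomorphism in $\stablehomotopy$ by testing against the set of compact generators $\generators _{\stablehomotopy}$ displayed in \eqref{gens.SH}. Since $\generators _{\stablehomotopy}$ is a set of compact generators in the sense of Neeman, we have $Loc(\generators _{\stablehomotopy})=\stablehomotopy$, and \ref{cor.gens.gen} reduces the claim to showing that for every $Y\in Sm_{X}$ and all integers $p,q$ the induced map
\[
	(E\wedge u)_{\ast}: \Hom _{\stablehomotopy}(\susp{p}(\Gm^{q}\wedge Y_{+}), E) \longrightarrow \Hom _{\stablehomotopy}(\susp{p}(\Gm^{q}\wedge Y_{+}), E\wedge \ratMoore)
\]
is an isomorphism of abelian groups. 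By Remark \ref{desusp.sh}, allowing arbitrary integer suspensions in $\stablehomotopy$ of the objects $F_{0}(\Gm^{q}\wedge Y_{+})$ accounts precisely for the family $\susp{n}G$ with $G\in \generators _{\stablehomotopy}$ and $n\in\mathbb Z$, so this reduction is justified.

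Next, I invoke the two properties of the rational Moore spectrum recalled immediately before the lemma: first, $E\wedge u$ is a rational weak equivalence, so tensoring the displayed map with $\mathbb Q$ yields an isomorphism of $\mathbb Q$-vector spaces; second, $E\wedge \ratMoore$ is $\mathbb Q$-local. Combined with the hypothesis that $E$ is itself $\mathbb Q$-local, this shows that both the source and the target of the displayed map are already $\mathbb Q$-vector spaces. Hence the canonical map to the tensor product with $\mathbb Q$ is a bijection on each side, and so the displayed map itself must be an isomorphism. Feeding this back into \ref{cor.gens.gen} gives the conclusion.

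The content of the proof is entirely formal once one has the two properties of $-\wedge \ratMoore$ recalled above the statement together with the compact generation of $\stablehomotopy$, so there is no genuine obstacle. The only point requiring a moment of care is the bookkeeping that translates between the form of the compact generators in \eqref{gens.SH} and the form $\susp{p}(\Gm^{q}\wedge Y_{+})$ used in the definition of $\mathbb Q$-locality, which is handled by Remark \ref{desusp.sh}.
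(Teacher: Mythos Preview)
Your proof is correct, but it proceeds along a different line than the paper's. Both arguments begin by invoking compact generation of $\stablehomotopy$ to reduce to checking that $(E\wedge u)_{\ast}$ is an isomorphism on $\Hom_{\stablehomotopy}(G,-)$ for each compact generator $G$. From there the paper unwinds $E\wedge \ratMoore$ explicitly as the sequential homotopy colimit $E\stackrel{2}{\to}E\stackrel{3}{\to}\cdots$, uses that compact objects see such colimits as algebraic colimits \cite[Lem.~2.8]{MR1308405}, and concludes because multiplication by each $n\geq 1$ is already invertible on the $\mathbb Q$-vector space $\Hom_{\stablehomotopy}(G,E)$. You instead appeal directly to the two facts about $-\wedge \ratMoore$ asserted just before the lemma (that $E\wedge u$ is a rational weak equivalence and that $E\wedge \ratMoore$ is $\mathbb Q$-local), so that on both source and target the canonical map to $-\otimes\mathbb Q$ is already a bijection and the rational isomorphism lifts for free. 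Your route is shorter and is simply the standard observation that a local equivalence between local objects is a genuine equivalence; the paper's route is more self-contained in that it does not lean on the asserted (but unproved in the paper) claim that $-\wedge \ratMoore$ models $\mathbb Q$-localization, instead re-deriving the relevant instance from the explicit hocolim description of $\ratMoore$.
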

\begin{proof}
 Since $\stablehomotopy$ is a 
compactly generated category in the sense of Neeman (see \ref{subsubsec.genSH});
it suffices to show that for every compact generator $G\in \generators _{\stablehomotopy}$ (see \ref{gens.SH})
the map induced by $E\wedge u$, 
$\Hom_{\stablehomotopy}(G,E)\stackrel{}{\rightarrow} \Hom_{\stablehomotopy}(G,E\wedge \ratMoore)$
is an isomorphism of abelian groups.
We observe that
$E\wedge  \ratMoore$ is the homotopy colimit of the diagram:
\[	\xymatrix{E \ar[r]^-{2}&E \ar[r]^-{3}&\cdots \ar[r]^-{n}&E \ar[r]^-{n+1}&\cdots}
\]
Thus, by \cite[Lem. 2.8]{MR1308405} we just need to show that for every integer $n\geq 1$, 
the maps $\Hom_{\stablehomotopy}(G,E)\stackrel{n}{\rightarrow}\Hom_{\stablehomotopy}(G,E)$ are isomorphisms
of abelian groups.  This follows directly from the fact that $E$ is $\mathbb Q$-local (see \ref{Q.local}).
\end{proof}

By construction, $\ratMoore$ is $\mathbb Q$-local in $\stablehomotopy$.  Thus, by \ref{lem.detectQloc} we conclude 
that $\ratMoore \wedge \ratMoore \cong \ratMoore$ in $\stablehomotopy$.  Hence, we deduce that $\ratMoore$
is a commutative ring spectrum in $\stablehomotopy$; with unit $u:\sphere \rightarrow \ratMoore$.  

\begin{rmk}  \label{rmk.1Qstrictring}
Combining \cite[Prop. 1.4.3]{MR2900540} and \cite[p.99 (2.3.22.1)-(2.3.22.2)]{MR2900540}; we can assume
that $\ratMoore$ is a commutative ring spectrum with unit in $\Tspectra$.
\end{rmk}

\begin{prop}  \label{rmk.Qlocequivcond}
The following conditions on a symmetric $T$-spectrum $E$ are equivalent:
\begin{enumerate}  
	\item \label{rmk.Qlocequivcond.1} $E$ is $\mathbb Q$-local (see \ref{Q.local}),
	\item \label{rmk.Qlocequivcond.2} The map $E\wedge u:E\rightarrow E\wedge \ratMoore$ is an 
			isomorphism in $\stablehomotopy$.
	\item \label{rmk.Qlocequivcond.3} $E$ is a module in $\stablehomotopy$ over $\ratMoore$.
	\item \label{rmk.Qlocequivcond.4} There exists a $\ratMoore$-module in $\Tspectra$, $E'$ which is
				isomorphic to $E$ in $\stablehomotopy$.
\end{enumerate}
\end{prop}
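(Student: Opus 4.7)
The plan is to establish the equivalence via the cyclic chain $(1)\Rightarrow(2)\Rightarrow(4)\Rightarrow(3)\Rightarrow(1)$. This routing is chosen deliberately to avoid the only genuinely delicate issue, namely promoting a module structure from the homotopy category $\stablehomotopy$ to the point-set model category $\Tspectra$.

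The implication $(1)\Rightarrow(2)$ is nothing other than \ref{lem.detectQloc}. For $(2)\Rightarrow(4)$, I would set $E'=E\wedge \ratMoore$ inside $\Tspectra$; by \ref{rmk.1Qstrictring}, $\ratMoore$ is a commutative ring spectrum with unit in $\Tspectra$, so $E'$ is an honest $\ratMoore$-module via the multiplication on the right-hand factor, and the hypothesis in $(2)$ is precisely that the canonical map $E\to E'$ is an isomorphism in $\stablehomotopy$.  For $(4)\Rightarrow(3)$, I would transport the $\ratMoore$-action on $E'$ along the given isomorphism $E\cong E'$ in $\stablehomotopy$; the associativity and unit axioms, being commutative diagrams, are automatically preserved by any isomorphism of ambient objects inside a category.

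The substantive step is $(3)\Rightarrow(1)$. A module structure $\mu:\ratMoore\wedge E\to E$ satisfying the unit axiom gives $\mu\circ (u\wedge E)=\mathrm{id}_E$, exhibiting $E$ as a retract of $\ratMoore\wedge E$ in $\stablehomotopy$. Smashing the filtered diagram defining $\ratMoore$ with $E$ and using the compactness of the generators in $\generators _{\stablehomotopy}$ (see \ref{subsubsec.genSH}), the same computation as in the proof of \ref{lem.detectQloc} yields, for every $G\in \generators _{\stablehomotopy}$,
\[
	\Hom _{\stablehomotopy}(G,\ratMoore\wedge E)\cong \colim \bigl(\Hom _{\stablehomotopy}(G,E)\xrightarrow{2}\Hom _{\stablehomotopy}(G,E)\xrightarrow{3}\cdots\bigr),
\]
which is manifestly a $\mathbb Q$-vector space. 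An abelian group which is a retract of a uniquely divisible group is itself uniquely divisible (injectivity of multiplication by $n$ follows by a diagram chase through the splitting monomorphism, surjectivity by one through its retraction). Hence $\Hom _{\stablehomotopy}(G,E)$ is a $\mathbb Q$-vector space for every such $G$, which amounts to the $\mathbb Q$-locality of $E$ in the sense of \ref{Q.local}.

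I do not foresee a serious obstacle. The hard part \emph{would} be a direct $(3)\Rightarrow(4)$ argument, in which a homotopy-coherent module action would need to be rectified to a strict one; routing instead through $(3)\Rightarrow(1)\Rightarrow(2)\Rightarrow(4)$ avoids this rectification step, replacing it with the elementary retract/divisibility argument above.
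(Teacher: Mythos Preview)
Your proof is correct and follows essentially the same cyclic route $(1)\Rightarrow(2)\Rightarrow(4)\Rightarrow(3)\Rightarrow(1)$ as the paper. The only cosmetic difference is in $(3)\Rightarrow(1)$: the paper dispatches the $\mathbb Q$-locality of $E\wedge \ratMoore$ in one line by pointing to the earlier remark that $-\wedge\ratMoore$ is a $\mathbb Q$-localization functor, whereas you re-derive this via the explicit filtered-colimit computation; both amount to the same retract argument.
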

\begin{proof}
\eqref{rmk.Qlocequivcond.1}$\Rightarrow$\eqref{rmk.Qlocequivcond.2}: This follows from \ref{lem.detectQloc}.

\eqref{rmk.Qlocequivcond.2}$\Rightarrow$\eqref{rmk.Qlocequivcond.4}: It suffices to show
that $E'=E\wedge \ratMoore $ is a $\ratMoore$-module in $\Tspectra$, and this holds
since $\ratMoore$ is a commutive ring spectrum with unit in $\Tspectra$ (see \ref{rmk.1Qstrictring}).

\eqref{rmk.Qlocequivcond.4}$\Rightarrow$\eqref{rmk.Qlocequivcond.3}: Obvious.

\eqref{rmk.Qlocequivcond.3}$\Rightarrow$\eqref{rmk.Qlocequivcond.1}: Let $\mu_{E}:E\wedge \ratMoore
\rightarrow E$ be the map inducing the $\ratMoore$-module structure of $E$ in $\stablehomotopy$.
We observe that the composition $\mu_{E}\circ (E\wedge u): E\rightarrow E$ is the identity on $E$.
Hence $E$ is a direct summand of $E\wedge \ratMoore$, which is $\mathbb Q$-local by construction.
Thus we conclude that $E$ is also $\mathbb Q$-local, since this property is clearly closed under
direct summands (see \ref{Q.local}).
\end{proof}

\subsubsection{Orthogonality and weakly birational covers}

\begin{lem}  \label{rat.comp.orth}
	Let $E$ be an arbitrary symmetric $T$-spectrum in $\stablehomotopy$.  Assume that
	$E$ belongs to $\northogonal{q}$  for some integer
	$q\in \mathbb Z$ (see \ref{def.orth}).  Then $E\wedge \ratMoore$ also belongs to $\northogonal{q}$.
\end{lem}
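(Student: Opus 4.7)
The plan is to reduce the verification to the compact generators $C^{q}_{\mathit{eff}}$ of $\neffstablehomotopy{q}$ and then exploit the description of $\ratMoore$ as a filtered homotopy colimit of copies of $\sphere$.

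First, I would reduce the problem to showing $\Hom _{\stablehomotopy}(G, E\wedge \ratMoore)=0$ for every $G \in C^{q}_{\mathit{eff}}$. Indeed, for a fixed object $F=E\wedge \ratMoore$, the class
\[
{}^{\perp}F = \{K\in \stablehomotopy \mid \Hom_{\stablehomotopy}(\susp{n}K, F)=0 \text{ for all } n \in \mathbb Z\}
\]
is a full triangulated subcategory of $\stablehomotopy$ closed under arbitrary coproducts, by the argument of \ref{lem.gens.gen}. Since $\neffstablehomotopy{q} = Loc(C^{q}_{\mathit{eff}})$ by definition, it is enough to verify $C^{q}_{\mathit{eff}} \subseteq {}^{\perp}F$, i.e.\ to check vanishing of $\Hom_{\stablehomotopy}(\susp{n}G, E\wedge \ratMoore)$ for every $G \in C^{q}_{\mathit{eff}}$ and every $n \in \mathbb Z$; note that $\susp{n}G$ again lies in $C^{q}_{\mathit{eff}}$ up to a shift that does not leave the class (and in any case $\northogonal{q}$ is shift-stable).

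Next, smashing the filtering diagram defining $\ratMoore$ with $E$ yields
\[
E \wedge \ratMoore \;\cong\; \hocolim\bigl(E \xrightarrow{2} E \xrightarrow{3} E \xrightarrow{4}\cdots\bigr)
\]
in $\stablehomotopy$. Any $G = F_{m}(S^{r}\wedge \Gm^{s}\wedge U_{+})$ in $C^{q}_{\mathit{eff}}$ lies in the set $\generators_{\stablehomotopy}$ of compact generators recalled in \ref{subsubsec.genSH}, hence is compact in $\stablehomotopy$. Compactness lets me commute $\Hom_{\stablehomotopy}(G,-)$ with the filtered homotopy colimit (as in \cite[Lem.~2.8]{MR1308405}), giving
\[
\Hom_{\stablehomotopy}(G, E\wedge \ratMoore) \;\cong\; \colim_{n} \Hom_{\stablehomotopy}(G, E).
\]

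Finally, the hypothesis $E \in \northogonal{q} = (\neffstablehomotopy{q})^{\perp}$ together with $G \in C^{q}_{\mathit{eff}} \subseteq \neffstablehomotopy{q}$ forces each term $\Hom_{\stablehomotopy}(G,E)$ to vanish, so the colimit is zero. This closes the argument. I do not anticipate a real obstacle: the only mildly delicate point is the legitimacy of reducing to the compact generators $C^{q}_{\mathit{eff}}$ rather than working against all of $\neffstablehomotopy{q}$, but this is a direct consequence of the localizing-subcategory formalism of \S\ref{sec.locsubcats} applied to ${}^{\perp}(E\wedge \ratMoore)$.
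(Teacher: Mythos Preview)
Your proof is correct. It differs from the paper's argument, however, in where the ``localizing'' observation is applied. The paper simply notes (via \ref{def.orth}, \ref{cor.r-orth.loc}, \ref{Vov.eff}) that $\northogonal{q}$ is itself a localizing subcategory of $\stablehomotopy$; since $E\wedge\ratMoore$ is the homotopy colimit of the diagram $E\xrightarrow{2}E\xrightarrow{3}\cdots$ and localizing subcategories are closed under homotopy colimits, the conclusion is immediate. You instead fix $F=E\wedge\ratMoore$, invoke the localizing property of ${}^{\perp}F$ to reduce to the compact generators $C^{q}_{\mathit{eff}}$, and then use compactness to pass the filtered hocolim through $\Hom_{\stablehomotopy}(G,-)$. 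Both routes are short and valid; the paper's has the advantage that it never appeals to compactness of the generators, while yours has the virtue of making the vanishing completely explicit at the level of Hom-groups.
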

\begin{proof}
	Combining \ref{def.orth}, \ref{cor.r-orth.loc} and \ref{Vov.eff} we deduce that
	$\northogonal{q}$ is a localizing subcategory of $\stablehomotopy$.
	By construction, $E\wedge  \ratMoore$ is the homotopy colimit of the diagram:
\[	\xymatrix{E \ar[r]^-{2}&E \ar[r]^-{3}&\cdots \ar[r]^-{n}&E \ar[r]^-{n+1}&\cdots}
\]
	To conclude, we observe that by definition of the homotopy colimit (see \cite[Def. 1.6.4]{MR1812507}) any 
	localizing subcategory (see \ref{subsec.locsubcats}) is closed under homotopy colimits.
\end{proof}

Given an arbitrary integer $n\in \mathbb Z$, consider the weakly $n$-birational cover of $E$ 
(see \ref{def.birat.cover}, \ref{prop.3.1.counit-properties}),
$\theta^{E}_{n}:wb^{c}_{n}(E)\rightarrow E$.  Assume that
$E$ is a $\mathbb Q$-local in $\stablehomotopy$.
By \ref{rmk.Qlocequivcond}, we
conclude that $E \wedge \ratMoore \cong E$ in $\stablehomotopy$.
Thus, taking the $\mathbb Q$-localization of $\theta^{E}_{n}$ we obtain a commutative triangle
in $\stablehomotopy$:
\begin{align}	\label{diag.com.Qloc1}
	\xymatrix{wb^{c}_{n}(E)\ar[r]^-{\theta^{E}_{n}} \ar[d]_-{wb^{c}_{n}(E) \wedge u}& E\\
		wb^{c}_{n}(E)\wedge \ratMoore \ar[ur]_-{\theta^{E}_{n}\wedge \ratMoore}&}
\end{align}
Combining \ref{rat.comp.orth}, \ref{thm.ort=wbirationalTspectra} and
the universal property of $\theta^{E}_{n}$ (see
\ref{prop.3.1.counit-properties}),
we conclude that there is a map $r: wb^{c}_{n}(E)\wedge \ratMoore \rightarrow wb^{c}_{n}(E)$
such that the following triangle in $\stablehomotopy$ commutes:
\begin{align}	\label{diag.com.Qloc2}
	\xymatrix{wb^{c}_{n}(E)\ar[r]^-{\theta^{E}_{n}} & E\\
		wb^{c}_{n}(E)\wedge \ratMoore \ar[ur]_-{\theta^{E}_{n}\wedge \ratMoore} \ar[u]^-{r}&}
\end{align}

\begin{lem}  \label{lem.comp.id}
Let $E$ be $\mathbb Q$-local spectrum in $\stablehomotopy$.  Then
the composition map $r\circ (wb^{c}_{n}(E)\wedge u):wb^{c}_{n}(E)\rightarrow wb^{c}_{n}(E)$
is  the identity on $wb^{c}_{n}(E)$, i.e. $wb^{c}_{n}(E)$ is a direct summand of $wb^{c}_{n}(E)\wedge \ratMoore$.
Hence, $wb^{c}_{n}(E)$ is $\mathbb Q$-local and the map 
$wb^{c}_{n}(E) \wedge u: wb^{c}_{n}(E)\rightarrow wb^{c}_{n}(E)\wedge \ratMoore$ is an isomorphism in $\stablehomotopy$.
\end{lem}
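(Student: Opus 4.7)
The plan is to exploit the universal property of the weakly birational cover (Proposition \ref{prop.3.1.counit-properties}) to identify $r\circ(wb^{c}_{n}(E)\wedge u)$ with the identity on $wb^{c}_{n}(E)$, and then deduce everything else from Proposition \ref{rmk.Qlocequivcond}.

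First, I would combine diagrams \eqref{diag.com.Qloc1} and \eqref{diag.com.Qloc2} to compute
\[
\theta^{E}_{n}\circ r\circ (wb^{c}_{n}(E)\wedge u)
= (\theta^{E}_{n}\wedge \ratMoore)\circ (wb^{c}_{n}(E)\wedge u)
= \theta^{E}_{n},
\]
where the first equality uses the commutativity of \eqref{diag.com.Qloc2} and the second the commutativity of \eqref{diag.com.Qloc1}. Equivalently, $\theta^{E}_{n}\circ (r\circ (wb^{c}_{n}(E)\wedge u)) = \theta^{E}_{n}\circ \mathrm{id}_{wb^{c}_{n}(E)}$ in $\stablehomotopy$.

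Next, since $wb^{c}_{n}(E)=j_{n}p_{n}(E)\in WB_{n}^{\perp}$ by construction (see \ref{def.birat.cover}), Proposition \ref{prop.3.1.counit-properties} applied with $F=wb^{c}_{n}(E)$ tells us that postcomposition with $\theta^{E}_{n}$,
\[
\theta^{E}_{n\ast}:\Hom_{\stablehomotopy}(wb^{c}_{n}(E),wb^{c}_{n}(E))\xrightarrow{\ \cong\ }\Hom_{\stablehomotopy}(wb^{c}_{n}(E),E),
\]
is an isomorphism of abelian groups. Therefore the identification above forces $r\circ (wb^{c}_{n}(E)\wedge u)=\mathrm{id}_{wb^{c}_{n}(E)}$, which exhibits $wb^{c}_{n}(E)$ as a direct summand of $wb^{c}_{n}(E)\wedge \ratMoore$.

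Finally, $wb^{c}_{n}(E)\wedge \ratMoore$ is $\mathbb Q$-local (it is the homotopy colimit description of the $\mathbb Q$-localization, as recalled just before \ref{lem.detectQloc}), and being $\mathbb Q$-local is closed under direct summands (see the last step of the proof of Proposition \ref{rmk.Qlocequivcond}, \eqref{rmk.Qlocequivcond.3}$\Rightarrow$\eqref{rmk.Qlocequivcond.1}). Hence $wb^{c}_{n}(E)$ is $\mathbb Q$-local, and Lemma \ref{lem.detectQloc} immediately yields that $wb^{c}_{n}(E)\wedge u$ is an isomorphism in $\stablehomotopy$. The only nontrivial step is the first one: producing and using the map $r$, which is guaranteed by combining the orthogonality Lemma \ref{rat.comp.orth} with Theorem \ref{thm.ort=wbirationalTspectra} and the universal property of $\theta^{E}_{n}$; once $r$ is in hand the rest is a direct summand argument.
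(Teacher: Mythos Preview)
Your proof is correct and follows essentially the same approach as the paper: combine the commutativity of \eqref{diag.com.Qloc1} and \eqref{diag.com.Qloc2} to get $\theta^{E}_{n}\circ(r\circ(wb^{c}_{n}(E)\wedge u))=\theta^{E}_{n}$, invoke the universal property \ref{prop.3.1.counit-properties} to deduce the composition is the identity, and then use the direct-summand argument together with \ref{rmk.Qlocequivcond} (equivalently \ref{lem.detectQloc}) to conclude $\mathbb Q$-locality and that $wb^{c}_{n}(E)\wedge u$ is an isomorphism. Your write-up simply spells out in more detail what the paper sketches in two sentences.
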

\begin{proof}
Since the direct summand of a $\mathbb Q$-local object in $\stablehomotopy$ is $\mathbb Q$-local (see \ref{Q.local}),
by \ref{rmk.Qlocequivcond} it suffices to show that $r\circ (wb^{c}_{n}(E)\wedge u)=id$.  
This follows from the universal property 
\eqref{prop.3.1.counit-properties} of $\theta^{E}_{n}$ and the commutativity of 
\eqref{diag.com.Qloc2}, \eqref{diag.com.Qloc1}.
\end{proof}

\subsubsection{Voevodsky's slice tower for orthogonal spectra}  \label{subsubsec.slice.Qorth}
 
Recall that the $q$-effective motivic stable homotopy category $\neffstablehomotopy{q}$ 
(see \ref{Vov.eff}) is the localizing subcategory of $\stablehomotopy$, of the form $Loc(C^{q}_{\mathit{eff}})$
(see \ref{eq.localizing.gen} and \ref{eqn.Cqeff}).
Voevodsky \cite{MR1977582} defines the slice filtration in $\stablehomotopy$ to be the  family of localizing subcategories:
		\[ \cdots \subseteq \neffstablehomotopy{q+1} \subseteq \neffstablehomotopy{q}
			\subseteq \neffstablehomotopy{q-1} \subseteq \cdots
		\]
The work of Neeman \cite{MR1308405}, \cite{MR1812507} shows that the inclusion
$\neffstablehomotopy{q}\stackrel{i_{q}}{\rightarrow} \stablehomotopy$ admits
a right adjoint $\stablehomotopy \stackrel{r_{q}}{\rightarrow} \neffstablehomotopy{q}$ \cite[Prop. 3.1.12]{MR2807904}; 
and that the functors, $\stablehomotopy \stackrel{f_{q}}{\rightarrow} \stablehomotopy ,$ 
$\stablehomotopy \stackrel{s_{q}}{\rightarrow} \stablehomotopy$ are triangulated, where $f_{q}$ is 
defined as the composition $i_{q}\circ r_{q}$; and $s_{q}$ 
is characterized by the fact that for every $E\in \stablehomotopy$, there exists a
distinguished triangle in $\stablehomotopy$ \cite[Thms. 3.1.16, 3.1.18]{MR2807904}:
		\begin{align}
				\label{orth.dist.triang2}	
			f_{q+1}E \rightarrow f_{q}E \rightarrow s_{q}E.
		\end{align}
We will refer to $f_{q}E$ as the \emph{$(q-1)$-connective cover} of $E$,  
and to $s_{q}E$ as the \emph{$q$-slice} of $E$.

\begin{rmk}  \label{rmk.sliceort}
It follows directly from the definition that $s_{q}E$ is in $\northogonal{q+1}$.
\end{rmk}

\begin{rmk}  \label{rmk.slice.hocolim}
Since $\stablehomotopy$ is compactly generated in the sense of Neeman (see \ref{subsubsec.genSH})
with set of compact generators $\generators _{\stablehomotopy}$ (see \ref{gens.SH}),
and by construction $\generators _{\stablehomotopy}$ is the union of $C^{q}_{\mathit{eff}}$
for $q\in \mathbb Z$ (see \ref{eqn.Cqeff}), we conclude that $E$ is the homotopy colimit of its
slice tower:
$	\cdots \rightarrow f_{n}E \rightarrow f_{n-1}E\rightarrow f_{n-2}E \rightarrow  \cdots .$
\end{rmk}
	
The sphere spectrum $\sphere$ is contained in the localizing subcategory $\stablehomotopyeff$ (see \ref{Vov.eff}). 
Thus, $\ratMoore$ is also contained in $\stablehomotopyeff$ since localizing subcategories
are closed under homotopy colimits.
Consider the triangulated functor, $-\wedge \ratMoore: \stablehomotopy \rightarrow \stablehomotopy$.

\begin{thm}		\label{lem.Qloccompslice}
Let $E$ be an arbitrary symmetric $T$-spectrum in $\stablehomotopy$, and $p\in \mathbb Z$
an arbitrary integer.  Then $s_{p}(E\wedge \ratMoore)\cong s_{p}(E)\wedge \ratMoore$.
\end{thm}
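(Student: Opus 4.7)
My plan is to first establish the analogous commutation for the connective cover, $f_p(E \wedge \ratMoore) \cong f_p(E) \wedge \ratMoore$, and then deduce the statement for $s_p$ from the defining distinguished triangle \eqref{orth.dist.triang2}. Recall that the connective cover triangle $f_p E \rightarrow E \rightarrow c_p E$ (with $c_p E$ the cofibre) is characterised by the two properties $f_p E \in \neffstablehomotopy{p}$ and $c_p E \in \northogonal{p}$; moreover this decomposition is unique up to canonical isomorphism via the universal property of the adjunction $(i_p, r_p, \varphi)$.

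The first key step is to show that the triangulated functor $-\wedge \ratMoore \colon \stablehomotopy \rightarrow \stablehomotopy$ preserves $\neffstablehomotopy{p}$. The full subcategory $\{H \in \stablehomotopy : H \wedge \ratMoore \in \neffstablehomotopy{p}\}$ is localising, since $\neffstablehomotopy{p}$ is localising and $-\wedge \ratMoore$ respects triangles and coproducts. For any $G \in C^{p}_{\mathit{eff}}$, the defining presentation of $\ratMoore$ as a homotopy colimit of copies of $\sphere$ realises $G \wedge \ratMoore$ as the homotopy colimit of $G \stackrel{2}{\rightarrow} G \stackrel{3}{\rightarrow} \cdots$, which lies in $\neffstablehomotopy{p}$ by closure under homotopy colimits. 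By minimality this subcategory contains all of $\neffstablehomotopy{p}$.

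Applying $-\wedge \ratMoore$ to the triangle $f_p E \rightarrow E \rightarrow c_p E$ now produces a distinguished triangle whose first term is in $\neffstablehomotopy{p}$ by the step above, and whose third term is in $\northogonal{p}$ by \ref{rat.comp.orth}. Uniqueness of the effective/orthogonal decomposition therefore furnishes a canonical isomorphism $f_p E \wedge \ratMoore \cong f_p(E \wedge \ratMoore)$, compatible with the counits to $E \wedge \ratMoore$. Smashing \eqref{orth.dist.triang2} with $\ratMoore$ and plugging in this identification both at level $p$ and at level $p+1$ yields a morphism of distinguished triangles whose first two components are isomorphisms; by TR3 the third component delivers the desired isomorphism $s_p(E) \wedge \ratMoore \cong s_p(E \wedge \ratMoore)$.

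The principal obstacle is ensuring that the identifications $f_{p+1} E \wedge \ratMoore \cong f_{p+1}(E \wedge \ratMoore)$ and $f_p E \wedge \ratMoore \cong f_p(E \wedge \ratMoore)$ commute with the transition maps $f_{p+1} \rightarrow f_p$, so that TR3 can in fact be invoked. This compatibility follows because both identifications are produced through the common universal property of mapping to $E \wedge \ratMoore$ from objects of $\neffstablehomotopy{p}$ (resp.\ $\neffstablehomotopy{p+1}$), but making this naturality argument fully precise is the most delicate part of the proof.
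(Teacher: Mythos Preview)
Your argument is correct. The compatibility concern you flag at the end is indeed the only subtle point, and your indicated resolution via the universal property is sound: both ways around the square are maps $f_{p+1}E\wedge\ratMoore \to f_p(E\wedge\ratMoore)$ lying over $E\wedge\ratMoore$, and since $f_{p+1}E\wedge\ratMoore\in\neffstablehomotopy{p+1}\subseteq\neffstablehomotopy{p}$, the universal property of the counit $f_p(E\wedge\ratMoore)\to E\wedge\ratMoore$ forces them to agree. Once that square commutes, TR3 extends it to a morphism of triangles and the triangulated five-lemma promotes the third map to an isomorphism.

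The paper's proof proceeds differently: rather than establishing $f_p$-compatibility by hand, it invokes an external black-box result \cite[Rmk.~2.13 and Thm.~2.12]{MR3034283} which, for a triangulated functor preserving the effective tower and commuting with filtered homotopy colimits, reduces commutation with $s_p$ to checking that $s_q(E)\wedge\ratMoore$ lands in $\northogonal{q+1}$ for all $q$. That last condition is then verified via \ref{rat.comp.orth}, exactly the same lemma you use. So the two proofs share the same two structural inputs --- preservation of $\neffstablehomotopy{q}$ and of $\northogonal{q}$ under $-\wedge\ratMoore$ --- but the paper outsources the assembly step to a cited theorem, whereas you carry it out directly via the universal property of $f_p$. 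Your route is more self-contained and makes no use of the filtered-homotopy-colimit hypothesis; the paper's route is shorter on the page but leans on an external reference whose content is essentially what you have written out.
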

\begin{proof}
Since $\ratMoore$ belongs to $\stablehomotopyeff$, we deduce that $(\neffstablehomotopy{q})\wedge \ratMoore
\subseteq (\neffstablehomotopy{q})$ for every integer $q\in \mathbb Z$.  Furthermore, it is clear that the
functor $-\wedge \ratMoore$ commutes with filtered homotopy colimits.

Therefore, by \cite[Rmk. 2.13 and Thm. 2.12]{MR3034283} it suffices to show that $s_{q}(E)\wedge \ratMoore$ is in
$\northogonal{q+1}$ for every integer $q\in \mathbb Z$.  This follows from \ref{rat.comp.orth} since
by construction $s_{q}(E)$ belongs to $\northogonal{q+1}$ (see \ref{rmk.sliceort}).
\end{proof}

\begin{prop}  \label{prop.slice.HQmod}
Let $E$ be a $\mathbb Q$-local symmetric $T$-spectrum in $\stablehomotopy$.  Then
for every integer $p\in \mathbb Z$, $s_{p}E$ is a module in $\stablehomotopy$ over 
$s_{0}(\sphere)\wedge \ratMoore$.
\end{prop}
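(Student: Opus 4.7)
The strategy is to combine three ingredients: the identification of $\mathbb{Q}$-local spectra with $\ratMoore$-modules (\ref{rmk.Qlocequivcond}), the slice-invariance under rational smashing (\ref{lem.Qloccompslice}), and the known multiplicative structure of Voevodsky's slice filtration (which makes $s_0(\sphere)$ a commutative ring spectrum and endows each $s_p(F)$ with a natural $s_0(\sphere)$-module structure via pairings $s_p(F_1)\wedge s_q(F_2)\to s_{p+q}(F_1\wedge F_2)$). The target spectrum $s_0(\sphere)\wedge \ratMoore$ will arise as $s_0(\ratMoore)$.

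Concretely, since $E$ is $\mathbb{Q}$-local, \ref{rmk.Qlocequivcond}\eqref{rmk.Qlocequivcond.4} lets us model $E$ as an honest $\ratMoore$-module in $\Tspectra$, with multiplication $\mu_E:E\wedge \ratMoore \to E$ making $\mu_E\circ(E\wedge u)$ homotopic to the identity. Applying $s_p$ to this action, together with \ref{lem.Qloccompslice} (used both for $E$ and for $\sphere$), yields natural isomorphisms
\[
s_p(E)\wedge \ratMoore \;\cong\; s_p(E\wedge \ratMoore), \qquad s_0(\sphere)\wedge \ratMoore \;\cong\; s_0(\ratMoore).
\]
Now feed the multiplicative pairing of slices with $F_1=E$, $F_2=\ratMoore$, and $q=0$ into the composite
\[
s_p(E)\wedge\bigl(s_0(\sphere)\wedge \ratMoore\bigr) \;\cong\; s_p(E)\wedge s_0(\ratMoore) \;\longrightarrow\; s_p(E\wedge \ratMoore) \;\cong\; s_p(E)\wedge \ratMoore \;\xrightarrow{s_p(\mu_E)}\; s_p(E),
\]
where the last arrow uses the identification to transport $s_p(\mu_E)$. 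This defines the candidate action map of $s_0(\sphere)\wedge \ratMoore$ on $s_p(E)$.

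The remaining verification is coherence: the unit condition reduces to $\mu_E\circ(E\wedge u)=\mathrm{id}$ after applying $s_p$ and invoking the unit axiom for the slice pairing; associativity follows by naturality of the slice multiplication together with associativity of $\mu_E$, via a diagram chase in $\stablehomotopy$ after replacing the relevant smash products with their slice-compatible models. The main obstacle I expect is precisely this last step — showing that the two a priori independent structures on $s_p(E)$ (the $s_0(\sphere)$-module structure coming from the multiplicative slice filtration, and the $\ratMoore$-module structure coming from $\mathbb{Q}$-locality via \ref{lem.Qloccompslice}) assemble into a single $(s_0(\sphere)\wedge \ratMoore)$-module structure. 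This coherence ultimately comes down to the fact that the slice pairing is itself $\ratMoore$-linear, which in turn follows from the naturality of \ref{lem.Qloccompslice} with respect to smashing with $\ratMoore$.
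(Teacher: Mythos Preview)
Your proposal is correct but takes a longer route than the paper. The paper's proof is essentially two observations: first, since $E$ is $\mathbb{Q}$-local, $E \cong E \wedge \ratMoore$ (by \ref{rmk.Qlocequivcond}), so \ref{lem.Qloccompslice} gives $s_p E \cong s_p(E) \wedge \ratMoore$; second, $s_p(E)$ is always an $s_0(\sphere)$-module in $\stablehomotopy$ (this is the cited multiplicative structure of the slice filtration), and smashing an $s_0(\sphere)$-module with the commutative ring spectrum $\ratMoore$ automatically produces an $(s_0(\sphere) \wedge \ratMoore)$-module. This completely sidesteps the coherence issue you flag as the main obstacle: rather than assembling two separate module structures on $s_p(E)$ and checking compatibility, the paper simply transports a single, already-coherent $(s_0(\sphere)\wedge \ratMoore)$-module structure along the isomorphism $s_p(E) \cong s_p(E) \wedge \ratMoore$. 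Your construction via the slice pairing $s_p(E) \wedge s_0(\ratMoore) \to s_p(E \wedge \ratMoore)$ followed by $s_p(\mu_E)$ works as well, but the associativity and unit verifications you anticipate become unnecessary once one notices this shortcut.
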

\begin{proof}
We observe that $E\cong E\wedge \ratMoore$ in $\stablehomotopy$, since $E$ is $\mathbb Q$-local
in $\stablehomotopy$ (see \ref{rmk.Qlocequivcond}).  Thus, by \ref{lem.Qloccompslice} 
we deduce that $s_{p}E\cong s_{p}(E\wedge \ratMoore)\cong s_{p}(E)\wedge \ratMoore$ in $\stablehomotopy$.

To finish the proof, it suffices to show that $s_{p}E$ is a module
in $\stablehomotopy$ over $s_{0}\sphere$.  This follows from
\cite[Thm. 3.6.14(6)]{MR2807904}.
\end{proof}

Let $\beilinson=KGL ^{(0)} \in \Tspectra$ 
denote the Beilinson motivic
cohomology spectrum constructed by Riou in \cite{MR2651359}.  By the work of
Cisinski-D\'eglise \cite[Cor. 14.2.6]{Cisinski:2009fk}, we conclude that $\beilinson$ is a commutative
cofibrant ring spectrum in $\Tspectra$.  In addition, in \cite[Thm. 4.1]{MR2861232} we show that
$s_{0}(\sphere)\wedge \ratMoore$ is equipped with a unique structure of
$\beilinson$-algebra in $Spt (\mathcal M _{X})$.

\begin{cor}  \label{cor.slice.HBmod}
Let $E$ be a $\mathbb Q$-local symmetric $T$-spectrum in $\stablehomotopy$.  Then
for every integer $p\in \mathbb Z$, $s_{p}E$ is a module in $\Tspectra$ over 
$\beilinson$.
\end{cor}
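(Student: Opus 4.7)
The plan is to combine Proposition \ref{prop.slice.HQmod} with the rigid $\beilinson$-algebra structure on $s_{0}(\sphere)\wedge \ratMoore$ from \cite[Thm. 4.1]{MR2861232}, and then rigidify the resulting $\beilinson$-action from $\stablehomotopy$ to $\Tspectra$ using the fact that $\beilinson$ is a commutative cofibrant ring spectrum by Cisinski--D\'eglise \cite[Cor. 14.2.6]{Cisinski:2009fk}. Schematically: I will push the $s_{0}(\sphere)$-action on $s_{p}E$ given by \cite[Thm. 3.6.14(6)]{MR2807904} through the smash with $\ratMoore$, restrict scalars along the unit $\beilinson \to s_{0}(\sphere)\wedge \ratMoore$, and invoke the model structure on $\beilinson$-modules to realize the outcome inside $\Tspectra$.

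The first step would be to apply Proposition \ref{prop.slice.HQmod} to our $\mathbb{Q}$-local $E$, which yields a module structure $\mu : (s_{0}(\sphere)\wedge \ratMoore) \wedge s_{p}E \to s_{p}E$ in $\stablehomotopy$. By \cite[Thm. 4.1]{MR2861232}, the spectrum $s_{0}(\sphere)\wedge \ratMoore$ carries a unique structure of $\beilinson$-algebra in $\Tspectra$, so in particular there is a morphism $\eta: \beilinson \to s_{0}(\sphere)\wedge \ratMoore$ of commutative ring spectra in $\Tspectra$. Restriction of scalars along $\eta$ turns $\mu$ into a $\beilinson$-action on $s_{p}E$ in $\stablehomotopy$, namely the composite $\beilinson \wedge s_{p}E \xrightarrow{\eta \wedge \mathrm{id}} (s_{0}(\sphere)\wedge \ratMoore)\wedge s_{p}E \xrightarrow{\mu} s_{p}E$, with associativity and unit axioms following from those of $\mu$ and $\eta$.

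The second step is to lift this module structure from $\stablehomotopy$ to $\Tspectra$, matching the statement of the corollary. Since $\beilinson$ is a commutative cofibrant ring spectrum in $\Tspectra$, Schwede--Shipley/Hovey's framework endows the category of $\beilinson$-modules in $\Tspectra$ with a cofibrantly generated model structure in which a map is a weak equivalence (resp. fibration) iff its underlying map in $\Tspectra$ is so. The homotopy category of this model structure is equivalent to the category of $\beilinson$-modules in $\stablehomotopy$; thus every $\beilinson$-module in $\stablehomotopy$ is represented, up to isomorphism in $\stablehomotopy$, by a strict $\beilinson$-module in $\Tspectra$. Applying this to $s_{p}E$ with the action constructed above produces the desired $\beilinson$-module in $\Tspectra$, exactly in the same spirit as the rigidification \ref{rmk.1Qstrictring} that was used for $\ratMoore$.

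The main technical obstacle is the rigidification step: one needs to know not only that a homotopy-category-level module admits a strict model (the Schwede--Shipley transfer theorem), but also that the specific actions produced by \cite[Thm. 3.6.14(6)]{MR2807904}, smashing with $\ratMoore$, and the Cisinski--D\'eglise construction of the unit $\eta$ are compatible at the level of $\Tspectra$ up to weak equivalence, so that the two passages (rigidify then restrict, or restrict then rigidify) yield the same $\beilinson$-module in $\stablehomotopy$. Once the cofibrancy of $\beilinson$ and the strict commutative ring structure on $\ratMoore$ are in hand, this compatibility is essentially formal and the corollary follows.
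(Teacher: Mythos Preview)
Your first step matches the paper: combine \ref{prop.slice.HQmod} with \cite[Thm.~4.1]{MR2861232} to get an $\beilinson$-module structure on $s_{p}E$ in $\stablehomotopy$.

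The gap is in the rigidification step. You assert that ``the homotopy category of this model structure is equivalent to the category of $\beilinson$-modules in $\stablehomotopy$,'' and hence any homotopy-level $\beilinson$-module lifts to a strict one. This is false in general: for an arbitrary ring spectrum $R$, the homotopy category $\homotcat(R\text{-Mod})$ is \emph{not} the same as the category of $R$-modules internal to $\stablehomotopy$, and a module structure in $\stablehomotopy$ need not lift to $\Tspectra$. The Schwede--Shipley transfer theorem only builds the model structure on strict $R$-modules; it says nothing about rigidifying homotopy-category modules. The reference to \ref{rmk.1Qstrictring} does not help, since that remark rigidifies the \emph{ring} structure on $\ratMoore$ via specific results of Riou, not an arbitrary module structure.

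The paper bypasses this by invoking \cite[Cor.~14.2.16(i)--(v)]{Cisinski:2009fk}, which exploits the fact that $\beilinson$ is smashing on $\mathbb{Q}$-local spectra: for such $E$, being an $\beilinson$-module in $\Tspectra$ is equivalent to the unit map $E\to E\wedge\beilinson$ being an isomorphism in $\stablehomotopy$, and the latter is a homotopy-category-level condition. To use this, the paper must also check that $s_{p}E$ is itself $\mathbb{Q}$-local, which it does via \ref{lem.Qloccompslice}: $s_{p}E\cong s_{p}(E)\wedge\ratMoore$. Your argument never establishes (or uses) that $s_{p}E$ is $\mathbb{Q}$-local, which is precisely the ingredient that makes the Cisinski--D\'eglise rigidification go through.
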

\begin{proof}
Combining \ref{prop.slice.HQmod} and \cite[Thm. 4.1]{MR2861232}, we deduce
that $s_{p}E$ is a module in $\stablehomotopy$ over $\beilinson$.  Then the result follows
from \cite[Cor 14.2.16(i)-(v)]{Cisinski:2009fk}, since $E$ is $\mathbb Q$-local in $\stablehomotopy$ and hence
$s_{p}E\cong s_{p}(E\wedge \ratMoore)
\cong s_{p}(E)\wedge \ratMoore$ (see \ref{lem.Qloccompslice}) is also $\mathbb Q$-local.
\end{proof}

Let $e:\sphere \rightarrow \beilinson$ be the unit map of the ring spectrum $\beilinson$ in $\Tspectra$.

\begin{thm}  \label{rat.orth.motive}
Let $E$  be a $\mathbb Q$-local symmetric $T$-spectrum in $\stablehomotopy$.  Assume that
$E$ belongs to the localizing subcategory $\northogonal{n}$ for some integer $n\in \mathbb Z$. 
Then $E$ is a module in $\Tspectra$ over $\beilinson$.
\end{thm}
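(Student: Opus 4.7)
The plan is to exploit the slice tower of $E$ together with the results already obtained about slices of $\mathbb Q$-local spectra. The orthogonality hypothesis reduces the reconstruction of $E$ from its slices to a truncated tower of extensions whose building blocks are slices, each of which is already an $\beilinson$-module in $\Tspectra$ by \ref{cor.slice.HBmod}.

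First, I would establish that $f_{n}E\cong \ast$ in $\stablehomotopy$. By the adjunction $(i_{n},r_{n},\varphi)$ recalled in \S \ref{subsubsec.slice.Qorth}, together with the fact that $f_{n}E=i_{n}r_{n}E$ belongs to $\neffstablehomotopy{n}$ and $i_{n}$ is fully faithful, one has
\[
\Hom _{\stablehomotopy}(f_{n}E,f_{n}E)\cong \Hom _{\stablehomotopy}(f_{n}E,E),
\]
and the right-hand side vanishes because $E\in \northogonal{n}$. Hence $\mathrm{id}_{f_{n}E}=0$, i.e.\ $f_{n}E\cong \ast$; the distinguished triangles \eqref{orth.dist.triang2} then force $s_{p}E\cong \ast$ for every $p\geq n$. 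Iterating those triangles downward exhibits each $f_{k}E$ with $k<n$ as a finite iterated extension of the slices $s_{k}E,\ldots,s_{n-1}E$, while \ref{rmk.slice.hocolim} identifies $E$ with $\hocolim _{k\to -\infty}f_{k}E$.

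Next, by \ref{cor.slice.HBmod} each slice $s_{p}E$ (for $p<n$) is an $\beilinson$-module in $\Tspectra$. I would lift the truncated slice tower to the homotopy category of $\beilinson$-modules in $\Tspectra$ by descending induction on $k$: starting from $\widetilde{f_{n}E}=\ast$, at each step I would take $\widetilde{f_{k-1}E}$ to be the fiber, in the category of $\beilinson$-modules, of a chosen lift of the connecting map $\widetilde{f_{k}E}\rightarrow \susp{1}s_{k-1}E$ to that category. Because $E$ is $\mathbb Q$-local, so are all $f_{k}E$ and $s_{k}E$, and the Cisinski--D\'eglise equivalence \cite[Cor.\,14.2.16]{Cisinski:2009fk} between the homotopy category of $\mathbb Q$-local symmetric $T$-spectra and the homotopy category of $\beilinson$-modules in $\Tspectra$ promotes each spectrum-level connecting map to a map of $\beilinson$-modules. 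Forming the homotopy colimit in the model category of $\beilinson$-modules then produces an $\beilinson$-module whose underlying symmetric $T$-spectrum is isomorphic to $E$ in $\stablehomotopy$, and transports the module structure back to $E$.

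The main obstacle is the coherent lifting of the slice tower: the connecting morphisms $\sigma _{k}$ of \eqref{orth.dist.triang2} and the filtered hocolim producing $E$ must be realized compatibly in the category of $\beilinson$-modules in $\Tspectra$. This is precisely what the Cisinski--D\'eglise equivalence provides, since an equivalence of model (or triangulated) categories automatically transports distinguished triangles and homotopy colimits. Verifying that the inductive construction really terminates with $E$ itself rather than merely with an abstractly isomorphic spectrum, and that the resulting module structure is canonical, is the delicate point of the argument.
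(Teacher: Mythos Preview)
Your skeleton matches the paper's: truncate the slice tower using $E\in\northogonal{n}$, invoke \ref{cor.slice.HBmod} to know each slice is an $\beilinson$-module, and reassemble. But your execution takes a detour, and the key shortcut you invoke is misstated.

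You appeal to ``the Cisinski--D\'eglise equivalence between the homotopy category of $\mathbb Q$-local symmetric $T$-spectra and the homotopy category of $\beilinson$-modules in $\Tspectra$''. No such equivalence holds over a general base: $\stablehomotopy_{\mathbb Q}$ splits into a plus and a minus part, and only $\stablehomotopy_{\mathbb Q+}$ is equivalent to $\beilinson$-modules. If your claimed equivalence were true, the theorem would be immediate from $\mathbb Q$-locality alone and the hypothesis $E\in\northogonal{n}$ would be superfluous. What \cite[Cor.~14.2.16]{Cisinski:2009fk} actually gives you here is that $\beilinson$ is idempotent, so a spectrum $F$ is (up to isomorphism) an $\beilinson$-module in $\Tspectra$ if and only if $F\wedge e:F\rightarrow F\wedge\beilinson$ is an isomorphism in $\stablehomotopy$, and in that case maps between such objects lift uniquely to $\beilinson$-module maps.

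The paper exploits this criterion directly, bypassing your inductive lifting of the tower. One simply checks that $E\wedge e$ is an isomorphism: since $E\cong\hocolim_{p\leq n}f_{p}E$ (\ref{rmk.slice.hocolim}) and $f_{n}E\cong\ast$, it suffices by the triangles \eqref{orth.dist.triang2} to show $s_{p}E\wedge e$ is an isomorphism for each $p<n$; but each $s_{p}E$ is already an $\beilinson$-module by \ref{cor.slice.HBmod}, so this is automatic. This avoids the coherence issues you flag in your last paragraph, and also avoids needing to know that each $f_{k}E$ is $\mathbb Q$-local (which you assert but which the paper has only established for slices).
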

\begin{proof}
By  \cite[Cor 14.2.16(i)-(v)]{Cisinski:2009fk}, it suffices to show that $E\wedge e:E\rightarrow E\wedge \beilinson$
is an isomorphism in $\stablehomotopy$.  Since $E\cong \hocolim _{p\leq n}f_{p}E$
in $\stablehomotopy$ (see \ref{rmk.slice.hocolim}), it suffices to show that
$f_{p}E\wedge e$ is an isomorphism in $\stablehomotopy$ for every integer $p\leq n$.

We observe that $f_{p}E \cong \ast \cong s_{p}E$ for $p\geq n$, since by hypothesis $E$ is in $\northogonal{n}$.
Hence, the slice tower of $E$ is of the form:
\[	\ast=f_{n}E \rightarrow s_{n-1}E\cong f_{n-1}E\rightarrow f_{n-2}E \rightarrow f_{n-3}E\rightarrow \cdots
\]
Thus, by \ref{orth.dist.triang2} it suffices to show that  $s_{p}E\wedge e$ is an isomorphism in $\stablehomotopy$
for every integer $p\leq n$.  This follows from \ref{cor.slice.HBmod} and \cite[Cor 14.2.16(i)-(v)]{Cisinski:2009fk}.
\end{proof}

\subsection{The weakly birational tower for $\mathbb Q$-local spectra}

Given symmetric $T$-spectrum $E$ in $\stablehomotopy$, we
consider its weakly birational tower \eqref{F.birtow}:
\begin{align*}	
	\xymatrix@C=1.5pc{\cdots \ar[r]
					& \ar[dr]|{\theta _{-1}^{E}} wb^{c}_{-1}(E) 
					\ar[r] & \ar[d]|{\theta _{0}^{E}} wb^{c}_{0}(E) \ar[r] 
					& wb^{c}_{1}(E) \ar[r] \ar[dl]|{\theta _{1}^{E}} & \cdots  \\						
					&&  E &&}
\end{align*}

Recall that $wb^{c}_{n}E$ is the weakly $n$-birational cover of $E$
(see \ref{def.birat.cover}, \ref{prop.3.1.counit-properties}).

\begin{thm}  \label{thm.birtowQloc}
Let $E$ be a $\mathbb Q$-local symmetric $T$-spectrum in $\stablehomotopy$ (see \ref{Q.local}),
and $n\in \mathbb Z$ an arbitrary integer.  Then $wb^{c}_{n}E$ satisfies the following properties:
	\begin{enumerate}
		\item  \label{thm.birtowQloc.1} $wb^{c}_{n}E$ is $\mathbb Q$-local in $\stablehomotopy$.
		\item	\label{thm.birtowQloc.2} $wb^{c}_{n}E$ is a module in $\Tspectra$ over $\beilinson$.
		\item \label{thm.birtowQloc.3} The natural map $wb^{c}_{n}E\rightarrow wb^{c}_{n+1}E$ is a map of
				$\beilinson$-modules in $\Tspectra$.
	\end{enumerate}
\end{thm}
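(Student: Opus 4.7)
The plan is to prove the three parts in order, with each building on the previous, and all three reducing to results already established earlier in the paper.

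For \eqref{thm.birtowQloc.1}, I would apply Lemma \ref{lem.comp.id} directly. The lemma was stated precisely for this purpose: since $E$ is $\mathbb{Q}$-local, the composition $r \circ (wb^{c}_{n}(E) \wedge u)$ is the identity, exhibiting $wb^{c}_{n}(E)$ as a retract of $wb^{c}_{n}(E) \wedge \ratMoore$. Since the $\mathbb{Q}$-local objects are closed under retracts (a direct summand of a $\mathbb{Q}$-vector space is a $\mathbb{Q}$-vector space), it follows that $wb^{c}_{n}E$ is $\mathbb{Q}$-local.

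For \eqref{thm.birtowQloc.2}, the plan is to apply Theorem \ref{rat.orth.motive}. That theorem requires two hypotheses: $\mathbb{Q}$-locality, which we just established, and membership in $\northogonal{n'}$ for some integer $n'$. By construction (see \ref{def.birat.cover} and \ref{prop.adj-ort.b}), $wb^{c}_{n}E = j_{n}p_{n}E$ belongs to $WB_{n}^{\perp}$, and Theorem \ref{thm.ort=wbirationalTspectra} identifies $WB_{n}^{\perp}$ with $\northogonal{n+1}$. Hence both hypotheses of \ref{rat.orth.motive} are satisfied, and we conclude that $wb^{c}_{n}E$ admits the structure of a $\beilinson$-module in $\Tspectra$.

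For \eqref{thm.birtowQloc.3}, the main input is the uniqueness of $\beilinson$-module structures on $\mathbb{Q}$-local spectra established by Cisinski--D\'eglise \cite[Cor.~14.2.16]{Cisinski:2009fk}. Since both $wb^{c}_{n}E$ and $wb^{c}_{n+1}E$ are $\mathbb{Q}$-local (by part \eqref{thm.birtowQloc.1}) and are modules over $\beilinson$ (by part \eqref{thm.birtowQloc.2}), the result in loc.\ cit.\ implies that any map between them in $\stablehomotopy$ is automatically a morphism of $\beilinson$-modules: indeed, the forgetful functor from $\beilinson$-modules to $\mathbb{Q}$-local spectra is an equivalence of categories in that setting. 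Applied to the canonical map $wb^{c}_{n}E \rightarrow wb^{c}_{n+1}E$ coming from \ref{def.birat.cover}, this yields the claim.

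The main obstacle is part \eqref{thm.birtowQloc.2}, since it is the place where we genuinely combine orthogonality, $\mathbb{Q}$-locality, and the slice-theoretic analysis of Section \ref{Q.coeffs}; but all the hard work has already been done in \ref{rat.orth.motive}, so the proof here is essentially a verification that the hypotheses apply. Parts \eqref{thm.birtowQloc.1} and \eqref{thm.birtowQloc.3} are then formal consequences given the machinery already in place.
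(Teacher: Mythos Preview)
Your proposal is correct and follows essentially the same approach as the paper: parts \eqref{thm.birtowQloc.1} and \eqref{thm.birtowQloc.2} match exactly, and for part \eqref{thm.birtowQloc.3} the paper makes the same use of \cite[Cor.~14.2.16]{Cisinski:2009fk}, only unwinding it concretely via the square
\[
\xymatrix{wb^{c}_{n}E \ar[r] \ar[d]_-{(-)\wedge e} & wb^{c}_{n+1}E \ar[d]^-{(-)\wedge e}\\
wb^{c}_{n}E\wedge \beilinson \ar[r] & wb^{c}_{n+1}E\wedge \beilinson}
\]
whose vertical arrows are isomorphisms, rather than invoking the equivalence of categories abstractly as you do.
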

\begin{proof}
\eqref{thm.birtowQloc.1}: This follows from \ref{lem.comp.id}.

\eqref{thm.birtowQloc.2}: By construction $wb^{c}_{n}E$ is in $WB_{n}^{\perp}$
(see \ref{prop.adj-ort.b} and \ref{def.birat.cover}).  Hence, the result follows by
combining \ref{rat.orth.motive} and \ref{thm.ort=wbirationalTspectra}.

\eqref{thm.birtowQloc.3}: By \cite[Cor. 14.2.6]{Cisinski:2009fk} $\beilinson$ is a commutative ring in $\Tspectra$,
with unit $u:\sphere \rightarrow \beilinson$.  Thus
the bottom row in the following commutative diagram in $\stablehomotopy$ is a map of
$\beilinson$-modules in $\Tspectra$:
\[ \xymatrix{wb^{c}_{n}E \ar[r] \ar[d]_-{(wb^{c}_{n}E)\wedge e}& wb^{c}_{n+1}E \ar[d]^-{(wb^{c}_{n+1}E)\wedge e}\\
				wb^{c}_{n}E\wedge \beilinson \ar[r]& wb^{c}_{n+1}E\wedge \beilinson}
\]
Hence it suffices to show that the vertical maps are isomorphisms in $\stablehomotopy$.  This follows from 
\cite[Cor 14.2.16(i)-(v)]{Cisinski:2009fk}, since by \eqref{thm.birtowQloc.2} above $wb^{c}_{n}E$ and
$wb^{c}_{n+1}E$ are modules in $\Tspectra$ over $\beilinson$.
\end{proof}

\section{The Weakly Birational Tower for Motivic Cohomology}  \label{sec.biratHZ}

Throughout this section, the base scheme $X$ will be of the form $\spec{k}$ with $k$ a perfect field.
We will study the weakly birational tower \eqref{F.birtow} for Voevodsky's $T$-spectrum $HR$,
which represents in $\stablehomotopy$ motivic cohomology \cite[6.1]{MR1648048} with coefficients in
an abelian group or a commutative ring $R$.  Our goal is to show that in some cases the tower is
not trivial, and that it induces and interesting finite filtration on the Chow groups.
						
\subsection{Main properties}

\begin{lem}  \label{orth.HZ}
	$\hr$ belongs to $WB_{0}^{\perp}$ and to $\stablehomotopyeff$.
\end{lem}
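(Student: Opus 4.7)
The plan is to handle the two assertions separately. The first reduces via Theorem \ref{thm.ort=wbirationalTspectra} to showing that $\hr$ is $1$-orthogonal, which I will check on generators using the standard vanishing of negative-weight motivic cohomology. The second is a well-known property of the motivic cohomology spectrum.

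For the assertion $\hr \in WB_{0}^{\perp}$: since $X=\spec{k}$ with $k$ perfect, Theorem \ref{thm.ort=wbirationalTspectra} identifies $WB_{0}^{\perp}$ with $\northogonal{1}$, so it suffices to prove $\Hom_{\stablehomotopy}(K,\hr)=0$ for every $K\in \neffstablehomotopy{1}$. The full subcategory of $\stablehomotopy$ consisting of objects with this vanishing property is closed under shifts, arbitrary coproducts and distinguished triangles, hence is a localizing subcategory. Since $\neffstablehomotopy{1}=Loc(C^{1}_{\mathit{eff}})$ by \ref{Vov.eff}, I need only check the vanishing on the generators $F_{n}(S^{r}\wedge \Gm^{s}\wedge U_{+})$ with $n,r,s\geq 0$ and $s-n\geq 1$. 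Using $F_{n}(S^{r}\wedge \Gm^{s}\wedge U_{+})\cong \susp{r-n}(\Gm^{s-n}\wedge U_{+})$ in $\stablehomotopy$, the identification of motivic cohomology recalled in Theorem \ref{mainthm2} yields
\[
\Hom_{\stablehomotopy}(F_{n}(S^{r}\wedge \Gm^{s}\wedge U_{+}),\hr)\;\cong\; H^{2n-r-s,\,n-s}(U,R),
\]
and the weight $n-s$ is at most $-1$. The vanishing then follows from the standard fact that motivic cohomology vanishes in negative weight.

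For the assertion $\hr\in \stablehomotopyeff$: this is a well-known property of the motivic cohomology spectrum, whose customary construction presents $\hr$ as a symmetric $T$-spectrum built from effective presheaves with transfers (so that each level and each structure map is manifestly effective). The claim can be invoked directly from \cite{MR1648048}. The only genuine work is in the first assertion; its subtlety lies in the correct $\Hom$-identification on the generators, after which the negative-weight vanishing closes the argument immediately.
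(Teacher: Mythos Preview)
Your proof is correct, but it takes a different route from the paper. The paper proves both assertions simultaneously by reducing to the single identity $\hr\cong s_{0}(\hr)$ in $\stablehomotopy$ (citing Voevodsky in characteristic zero and Levine in positive characteristic): once this is known, $s_{0}(\hr)$ lies in $\stablehomotopyeff$ and in $\northogonal{1}=WB_{0}^{\perp}$ by the general properties of the zero slice. Your argument instead treats the two assertions independently and more elementarily: for $WB_{0}^{\perp}$ you reduce via \ref{thm.ort=wbirationalTspectra} to checking $\hr\in\northogonal{1}$ on the generators $C^{1}_{\mathit{eff}}$, which is exactly the vanishing of motivic cohomology in negative weight; for effectivity you appeal directly to the construction of $\hr$. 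Your route avoids invoking the (nontrivial) computation of the zero slice of $\hr$, at the cost of not exhibiting the stronger structural fact $\hr\cong s_{0}(\hr)$ that the paper implicitly records here and uses elsewhere.
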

\begin{proof}
By \ref{thm.newpres.birat.b} and
\cite[Def. 3.1 and Thm. 3.7]{MR3035769}, we are reduced to show that $\hr$ is isomorphic
in $\stablehomotopy$ to its zero slice $s_{0}(\hr)$ \cite{MR1977582}.  This follows from
\cite{MR2101286} in characteristic zero, and \cite[Lem. 10.4.1]{MR2365658} in positive characteristic.
\end{proof}

\begin{prop}  \label{prop.hzgeq0birat}
	Let $n\geq 0$ be an arbitrary integer.  Then the natural map (see \ref{F.birtow}),
		$	\theta _{n}^{\hr}:wb ^{c}_{n}(\hr)\rightarrow \hr$
	is an isomorphism in $\stablehomotopy$.
\end{prop}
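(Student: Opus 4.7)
The plan is very short because the structural results have already been assembled. By Corollary \ref{cor.comp.tool1}, for any symmetric $T$-spectrum $E$ the counit $\theta^{E}_{n}: wb^{c}_{n}E \to E$ is an isomorphism in $\stablehomotopy$ if and only if $E \in WB_{n}^{\perp}$. So the entire proposition reduces to verifying that $\hr$ lies in $WB_{n}^{\perp}$ for every $n \geq 0$.

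First I would invoke Lemma \ref{orth.HZ}, which already tells us that $\hr \in WB_{0}^{\perp}$. Next I would recall the chain of inclusions \eqref{eq.birtower}, namely
\[
\cdots \subseteq WB_{-1}^{\perp} \subseteq WB_{0}^{\perp} \subseteq WB_{1}^{\perp} \subseteq WB_{2}^{\perp} \subseteq \cdots,
\]
which was established just before Proposition \ref{prop.adj-ort.b} from the evident inclusion $\generators_{WB_{q+1}} \subseteq \generators_{WB_{q}}$ of the generating sets. In particular, $WB_{0}^{\perp} \subseteq WB_{n}^{\perp}$ for every integer $n \geq 0$, so membership of $\hr$ in $WB_{0}^{\perp}$ immediately propagates to membership in $WB_{n}^{\perp}$ for all $n \geq 0$.

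Finally, applying Corollary \ref{cor.comp.tool1} to $E = \hr$ in each $WB_{n}^{\perp}$ with $n \geq 0$ gives that $\theta^{\hr}_{n}: wb^{c}_{n}(\hr) \to \hr$ is an isomorphism in $\stablehomotopy$, as desired. There is no real obstacle here: the work was done in Lemma \ref{orth.HZ} (via the identifications with Voevodsky's slice filtration and the computation of $s_{0}(\hr)$), and everything else is a direct citation of the tower's monotonicity and the universal property of the weakly birational cover.
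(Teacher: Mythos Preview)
Your proposal is correct and follows exactly the same approach as the paper: reduce via Corollary~\ref{cor.comp.tool1} to showing $\hr \in WB_{n}^{\perp}$, use the tower inclusions \eqref{eq.birtower} to reduce further to $n=0$, and conclude by Lemma~\ref{orth.HZ}. The paper's proof is simply a two-sentence compression of what you wrote.
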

\begin{proof}
Combining \ref{cor.comp.tool1} and \ref{eq.birtower}, it suffices to show that $\hr$ is in 
$WB_{0}^{\perp}$.  This follows from \ref{orth.HZ}.
\end{proof}

Hence, we conclude that the weakly birational tower \eqref{F.birtow} for motivic cohomology with $R$ coefficients is
as follows:

\begin{align}	\label{motbirtower}
	\xymatrix@C=1.5pc{\cdots \ar[r]
										& wb^{c} _{-3}(\hr) \ar[drr]|{\theta _{-3}^{\hr}} \ar[r]
										& wb^{c} _{-2}(\hr) \ar[r] \ar[dr]|{\theta _{-2}^{\hr}}& wb^{c}_{-1}(\hr) 
										\ar[d]|{\theta _{-1}^{\hr}}  \\						
										&&& \hr}
\end{align}

\subsubsection{Non-triviality of the tower}
Our goal is to show that for $n<0$, the weakly birational covers $wb^{c} _{n}(\hr)$ and the layers
$wb_{n/(n-1)}(\hr)$ are interesting, i.e. they are not isomorphic in $\stablehomotopy$ to either $\hr$ or
$\ast$.  

First we introduce some notation; let $\hr ^{p,q}$, $A_{\hr}$ denote 
$\susp{p-q}\wedge \Gm ^{q} \wedge \hr$ and $\oplus _{p,q\in \mathbb Z}\hr ^{p,q}$, respectively.
By adjointness, there are isomorphisms:
\begin{align}  
\Hom _{\stablehomotopy}(\hr ^{p,q}, \hr) & \cong \Hom _{\stablehomotopy}(\hr , \hr ^{-p,-q}) =\mathcal A ^{-p,-q}
\label{adj.motoper} \\
\Hom _{\stablehomotopy}(A_{\hr}, \hr) & \cong \prod _{p,q\in \mathbb Z} 
\Hom _{\stablehomotopy}(\hr , \hr ^{-p,-q}) =\prod _{p,q\in \mathbb Z}\mathcal A ^{-p,-q}  \label{adj.motopertot}
\end{align}
where $\mathcal A ^{-p,-q}$ is the group of bistable operations in motivic cohomology with $R$
coefficients of degree $-p$ and weight $-q$ \cite{MR2031198}.

The non-triviality of $wb^{c} _{n}(\hr)$ and 
$wb_{n/(n-1)}(\hr)$ will follow from the existence of non-trivial elements in $\mathcal A ^{p,q}$
for appropriate $p$ and $q$.
This information is codified in the spectral sequence of the tower \eqref{motbirtower} evaluated in $A _{\hr}$.
Namely:

\begin{thm}  \label{thm.motoperspecseq}
Let $p\leq 0$.
Consider the spectral sequence of the tower \eqref{motbirtower} with term (see \ref{thm.birspecseq}):
\[ E^{1}_{p,q}=\Hom _{\stablehomotopy}(A _{\hr},\susp{q-p} wb_{p/p-1}\hr)=\prod _{j\in \mathbb Z} \mathcal A ^{j,-p}
\]
given by all bistable operations in motivic cohomology with $R$ coefficients of weight $-p$.
Then the spectral sequence degenerates.  Therefore, the term $E^{1}_{p,q}$ is the associated graded
$gr_{p}F_{\bullet}$ for the filtration $F_{\bullet}$ of $\Hom _{\stablehomotopy} (A _{\hr},\hr)=
\prod _{j,k\in \mathbb Z}\mathcal A ^{-j,-k}$ 
given by the image of 
\[ \theta _{n\ast}^{\hr}:\Hom _{\stablehomotopy}(A_{\hr}, wb^{c}_{n}\hr) \rightarrow 
\Hom _{\stablehomotopy}(A_{\hr}, \hr), \]
which can be written explicitly as $\prod _{j\in \mathbb Z, k\leq n}\mathcal A ^{-j,-k}$, i.e. coincides with all
bistable operations in motivic cohomology with $R$ coefficients of weight $\geq-n$.
\end{thm}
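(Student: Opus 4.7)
The approach is to compute both $\Hom_{\stablehomotopy}(A_{\hr}, wb^{c}_{n}\hr)$ and $\Hom_{\stablehomotopy}(A_{\hr}, \susp{q-p}wb_{p/p-1}\hr)$ explicitly by decomposing $A_{\hr}=\oplus_{a,b\in\mathbb Z}\hr^{a,b}$ and handling each summand with two complementary vanishing arguments: the universal property of the counit \ref{prop.3.1.counit-properties}, and the identification $WB_{n}^{\perp}=\northogonal{n+1}$ from \ref{thm.ort=wbirationalTspectra}. The key structural input is that $\hr\in WB_{0}^{\perp}\cap\stablehomotopyeff$ by \ref{orth.HZ}; together with \ref{lem.Gmshift.orth} this yields $\hr^{a,b}=\susp{a-b}(\Gm^{b}\wedge\hr)\in WB_{b}^{\perp}$, and symmetrically $\hr^{a,b}\in\neffstablehomotopy{b}$ for every pair $(a,b)$.

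First I would compute $\Hom_{\stablehomotopy}(\hr^{a,b},wb^{c}_{n}\hr)$ case by case. If $b\leq n$, then \eqref{eq.birtower} gives $\hr^{a,b}\in WB_{b}^{\perp}\subseteq WB_{n}^{\perp}$, and \ref{prop.3.1.counit-properties} identifies the Hom group with $\Hom_{\stablehomotopy}(\hr^{a,b},\hr)=\mathcal A^{-a,-b}$. If $b>n$, then $\hr^{a,b}\in\neffstablehomotopy{b}\subseteq\neffstablehomotopy{n+1}$ while $wb^{c}_{n}\hr\in\northogonal{n+1}$, so by \ref{def.orth} the Hom group vanishes. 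Assembling these cases, $\theta^{\hr}_{n\ast}$ is the inclusion of the direct summand $\prod_{a\in\mathbb Z,\,b\leq n}\mathcal A^{-a,-b}$ inside $\prod_{a,b}\mathcal A^{-a,-b}=\Hom_{\stablehomotopy}(A_{\hr},\hr)$; in particular $\theta^{\hr}_{n\ast}$ is injective and the filtration $F_{n}$ admits the explicit description claimed in the theorem.

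For the layers I use that by \ref{thm.3.1.slicefiltration}, $wb_{p/p-1}\hr$ lies in both $WB_{p}^{\perp}=\northogonal{p+1}$ and $(WB_{p-1}^{\perp})^{\perp}$. The same two arguments now show $\Hom_{\stablehomotopy}(\hr^{a,b},wb_{p/p-1}\hr)=0$ when $b>p$ (by effectivity) and when $b<p$ (since $\hr^{a,b}\in WB_{b}^{\perp}\subseteq WB_{p-1}^{\perp}$). For the remaining case $b=p$, I would apply $\Hom_{\stablehomotopy}(\hr^{a,p},-)$ to the triangle \eqref{equation.3.1.slicefitration}: both side terms $\Hom_{\stablehomotopy}(\hr^{a,p},wb^{c}_{p-1}\hr)$ and $\Hom_{\stablehomotopy}(\hr^{a-1,p},wb^{c}_{p-1}\hr)$ vanish because $\hr^{a,p},\hr^{a-1,p}\in\neffstablehomotopy{p}$ and $wb^{c}_{p-1}\hr\in\northogonal{p}$, so the middle arrow is an isomorphism yielding $\mathcal A^{-a,-p}$. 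Reindexing through the external $\susp{q-p}$ (which only shifts the first internal index) absorbs the extra suspension and collects the results into $E^{1}_{p,q}=\prod_{j}\mathcal A^{j,-p}$, as required.

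Finally, degeneration is essentially formal. The injectivity of each $\theta^{\hr}_{n\ast}$, together with the identification of its image as a direct summand, splits the long exact sequences attached to \eqref{equation.3.1.slicefitration} into short exact sequences $0\to F_{p-1}\to F_{p}\to \prod_{a}\mathcal A^{-a,-p}\to 0$, so $gr_{p}F_{\bullet}$ coincides with $E^{1}_{p,q}$ term-by-term. Since \ref{thm.birspecseq} asserts that the spectral sequence converges to this associated graded, the equality $E^{1}=E^{\infty}$ is forced and all differentials $d_{r}$ ($r\geq 1$) must vanish. The main technical delicacy is the case $b=p$ in the layer calculation, where one must simultaneously invoke both orthogonality conditions on $wb_{p/p-1}\hr$ and the effectivity of $\hr^{a,p}$ and its desuspension; once these align, the remainder of the argument is purely formal bookkeeping.
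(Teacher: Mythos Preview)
Your proof is correct and follows essentially the same route as the paper. The paper packages your two case-by-case computations as separate auxiliary lemmas (\ref{negoperbirat} for $b\leq n$ and \ref{operbiratvanish} for $b\geq n+1$) and your triangle argument for the $b=p$ layer as \ref{lem.specseqdegs}, then cites all three; you simply prove them inline using the same ingredients (\ref{orth.HZ}, \ref{lem.Gmshift.orth}, \ref{prop.3.1.counit-properties}, \ref{thm.ort=wbirationalTspectra}, \ref{thm.3.1.slicefiltration}). Your degeneration argument via injectivity of $\theta^{\hr}_{n\ast}$ and the resulting splitting of the long exact sequences is a slightly more explicit unpacking of what the paper leaves implicit when it says the three lemmas combine to give degeneration, but the content is identical.
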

\begin{proof}
The existence of the spectral sequence with  term
\[ E^{1}_{p,q}=\Hom _{\stablehomotopy}(A _{\hr},\susp{q-p} wb_{p/p-1}\hr),\] 
follows from \ref{thm.birspecseq}.  Combining \ref{negoperbirat}, \ref{operbiratvanish} and \ref{lem.specseqdegs};
we deduce that the spectral sequence degenerates and that (see \ref{adj.motoper} and \ref{adj.motopertot}):
\[ E^{1}_{p,q}=\prod _{j\in \mathbb Z}\Hom _{\stablehomotopy}(\hr^{j,p},\hr)=
\prod _{j\in \mathbb Z} \mathcal A ^{j,-p}\]
Since the spectral sequence degenerates, we conclude that $E^{1}_{p,q}$ coincides with the
associated graded $gr_{p}F_{\bullet}$ for the filtration given by the image of $\theta _{n\ast}^{\hr}$.
Finally, combining \ref{negoperbirat}, \ref{operbiratvanish} and \eqref{adj.motoper}-\eqref{adj.motopertot}
we deduce that the image of $\theta _{n\ast}^{\hr}$ is $\prod _{j\in \mathbb Z, k\leq n}\mathcal A ^{-j,-k}$.
\end{proof}

\begin{thm}  \label{thm.hzcoversnotriv}
	Let $q>0$ be an arbitrary integer.  Assume that there is an integer $p$ such that 
	$\mathcal A^{p,q}\neq 0$, i.e. there exist non-trivial
	bistable motivic operations of degree $p$ and weight $q$.  Then:
	\begin{enumerate}
	\item	\label{thm.hzcoversnotriv.a}  For every integer $0<n\leq q$, $wb^{c}_{-n}\hr$ is not isomorphic to 
				$\ast$.  In addition, the natural map $\theta ^{\hr}_{-n}:wb^{c}_{-n}\hr \rightarrow \hr$ is not
				an isomorphism in $\stablehomotopy$.
	\item	\label{thm.hzcoversnotriv.b}  The layer $wb_{-q/(-q-1)}\hr$ is not isomorphic to $\ast$
				in $\stablehomotopy$.
	\end{enumerate}
\end{thm}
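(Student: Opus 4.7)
The plan is to deduce both assertions directly from Theorem \ref{thm.motoperspecseq}, which, with respect to the test spectrum $A_{\hr}$, explicitly identifies the image of each map $\theta_{-n\ast}^{\hr}$ with a prescribed subgroup of $\prod_{j,k\in\mathbb{Z}}\mathcal{A}^{-j,-k}$ and, because the spectral sequence degenerates, also identifies the $E^{1}$ term attached to each layer $wb_{p/p-1}\hr$.

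For part \eqref{thm.hzcoversnotriv.a}, fix $n$ with $0<n\leq q$. The hypothesis supplies a non-zero bistable operation $\alpha\in\mathcal{A}^{p,q}$, which by the identifications \eqref{adj.motoper}--\eqref{adj.motopertot} corresponds to a non-zero element of $\Hom_{\stablehomotopy}(A_{\hr},\hr)$ lying in the weight-$q$ factor. Since $q\geq n$, Theorem \ref{thm.motoperspecseq} places $\alpha$ in the image of $\theta_{-n\ast}^{\hr}$; hence $\Hom_{\stablehomotopy}(A_{\hr},wb^{c}_{-n}\hr)\neq 0$ and so $wb^{c}_{-n}\hr\not\cong\ast$. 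To prove that $\theta_{-n}^{\hr}$ is not an isomorphism, I would observe that the same theorem identifies the image of $\theta_{-n\ast}^{\hr}$ with exactly the factors corresponding to operations of weight $\geq n$. Since $n\geq 1$, this is a proper subgroup of $\prod_{j,k\in\mathbb{Z}}\mathcal{A}^{-j,-k}$: for instance, the identity $\mathrm{id}\in\mathcal{A}^{0,0}$ has weight $0<n$ and therefore lies outside the image. Hence $\theta_{-n\ast}^{\hr}$ is not surjective, which prevents $\theta_{-n}^{\hr}$ from being an isomorphism in $\stablehomotopy$.

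For part \eqref{thm.hzcoversnotriv.b}, I would specialize the $E^{1}$ identification in Theorem \ref{thm.motoperspecseq} to the filtration index $-q\leq 0$, obtaining
\[ \Hom_{\stablehomotopy}(A_{\hr},\susp{q}wb_{-q/(-q-1)}\hr)=\prod_{j\in\mathbb{Z}}\mathcal{A}^{j,q}. \]
The right-hand side is non-zero by the standing hypothesis $\mathcal{A}^{p,q}\neq 0$, so $\susp{q}wb_{-q/(-q-1)}\hr\not\cong\ast$, and since $\susp{q}$ is invertible in $\stablehomotopy$ we conclude $wb_{-q/(-q-1)}\hr\not\cong\ast$.

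No serious obstacle arises in this proof: the essential content has already been established in Theorem \ref{thm.motoperspecseq}, and both claims reduce to elementary book-keeping once its conclusion is in hand. The only points requiring care are (i) correctly matching the filtration level $-n$ with the collection of operations of weight $\geq n$ that it accesses, and (ii) exhibiting $\mathrm{id}\in\mathcal{A}^{0,0}$ as a witness that the image of $\theta_{-n\ast}^{\hr}$ is proper in part \eqref{thm.hzcoversnotriv.a}.
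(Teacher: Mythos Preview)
Your proposal is correct and follows essentially the same approach as the paper: both arguments rest on the computations in Lemmas~\ref{negoperbirat}, \ref{operbiratvanish}/\ref{cor.negcoversnotiso}, and \ref{lem.specseqdegs}, the only difference being that you invoke them indirectly through the packaged statement of Theorem~\ref{thm.motoperspecseq}, whereas the paper cites those auxiliary lemmas directly. In particular, your witness $\mathrm{id}\in\mathcal A^{0,0}$ for the non-surjectivity of $\theta^{\hr}_{-n\ast}$ is exactly the content of Corollary~\ref{cor.negcoversnotiso}.
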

\begin{proof}
\eqref{thm.hzcoversnotriv.a}: On the one hand, combining \ref{negoperbirat} with \ref{adj.motoper}, we conclude that
$wb^{c}_{-n}\hr$ and $\ast$ are not isomorphic in $\stablehomotopy$.  On the other hand, by \ref{cor.negcoversnotiso}
we deduce that $\theta ^{\hr}_{-n}$ is not an isomorphism in $\stablehomotopy$.

\eqref{thm.hzcoversnotriv.b}: This follows by combining \ref{negoperbirat} and \ref{lem.specseqdegs}.
\end{proof}

\subsection{The filtration in the motivic cohomology of a scheme}

Recall that $X$ is of the form $\spec{k}$ with $k$ a perfect field and 
that $H^{p,q}=\susp{p-q}\wedge \Gm ^{q} \wedge \hr$.  

Let $Y\in Sm_{X}$.  We will write $H^{p,q}(Y,R)$ for $\Hom _{\stablehomotopy}(\Sigma _{T}^{\infty}Y_{+}, \hr ^{p,q})$,
i.e. for the motivic cohomology of $Y$ with coefficients in 
$R$ of degree $p$ and weight $q$.  By adjointness:
$H^{p,q}(Y,R)\cong \Hom _{\stablehomotopy}(\susp{-p+q}(\Gm ^{-q}\wedge Y_{+}), \hr )$.
Therefore, the abutment of the spectral sequence of the tower \eqref{motbirtower} evaluated in 
$\susp{-p+q}(\Gm ^{-q}\wedge Y_{+})$ induces an interesting filtration in $H^{p,q}(Y,R)$.  Namely:

\begin{thm}  \label{thm.filt.motcohY}
Let $p$, $q$ be arbitrary integers, and let $Y$ be in $Sm_{X}$.  Then there exists a decreasing filtration
$F^{\bullet}$ on $H^{p,q}(Y,R)$ where the $n$-component is given by the image of $\theta _{-n\ast}^{\hr}$:
\[ \xymatrix{\Hom _{\stablehomotopy}(\susp{-p+q}(\Gm ^{-q}\wedge Y_{+}), wb^{c}_{-n}\hr) \ar[d]^-{\theta _{-n\ast}^{\hr}}\\
\Hom _{\stablehomotopy}(\susp{-p+q}(\Gm ^{-q}\wedge Y_{+}), \hr)=H^{p,q}(Y,R).} \]
In addition, the filtration $F^{\bullet}$ is functorial in $Y$ with respect to morphisms in $Sm_{X}$.
\end{thm}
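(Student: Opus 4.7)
The plan is to take $F^{n}H^{p,q}(Y,R)$ to be, by definition, the image of the map $\theta^{\hr}_{-n\ast}$ displayed in the statement, and then to verify directly that $F^{\bullet}$ is decreasing and functorial in $Y$. All the work reduces to formal manipulation with the weakly birational tower \eqref{motbirtower} already constructed in \S\ref{subsec.birattow}.

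For the decreasing property, I would invoke the structure maps of the tower (see \ref{def.birat.cover} and \eqref{motbirtower}): for every $n\geq 0$ there is a canonical morphism $\alpha_{-n}:wb^{c}_{-n-1}\hr \to wb^{c}_{-n}\hr$ in $\stablehomotopy$ satisfying $\theta^{\hr}_{-n}\circ \alpha_{-n}=\theta^{\hr}_{-n-1}$. Applying the bifunctor $\Hom_{\stablehomotopy}(\susp{-p+q}(\Gm^{-q}\wedge Y_{+}),-)$, which is contravariant in its first argument and covariant in its second, yields a commutative triangle of abelian groups exhibiting $\theta^{\hr}_{-n-1\ast}$ as the composite $\theta^{\hr}_{-n\ast}\circ (\alpha_{-n})_{\ast}$; hence
\[
F^{n+1}H^{p,q}(Y,R)=\mathrm{Im}(\theta^{\hr}_{-n-1\ast}) \subseteq \mathrm{Im}(\theta^{\hr}_{-n\ast})=F^{n}H^{p,q}(Y,R),
\]
as required.

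For functoriality, given a morphism $f:Y\to Z$ in $Sm_{X}$, one obtains an induced morphism $\tilde{f}:\susp{-p+q}(\Gm^{-q}\wedge Y_{+}) \to \susp{-p+q}(\Gm^{-q}\wedge Z_{+})$ in $\stablehomotopy$. Since $\theta^{\hr}_{-n}:wb^{c}_{-n}\hr \to \hr$ is a single morphism in $\stablehomotopy$ independent of $Y$, naturality of $\Hom_{\stablehomotopy}$ in its first argument produces a commutative square in which both horizontal arrows equal $\theta^{\hr}_{-n\ast}$ and both vertical arrows equal $\tilde{f}^{\ast}$. A straightforward diagram chase then shows that the pullback $\tilde{f}^{\ast}:H^{p,q}(Z,R)\to H^{p,q}(Y,R)$ carries $F^{n}H^{p,q}(Z,R)$ into $F^{n}H^{p,q}(Y,R)$, which is exactly the required functoriality.

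The only conceptual point to keep in mind is that $\theta^{\hr}_{-n}$ is defined once and for all on $\hr$, so both properties of $F^{\bullet}$ are purely formal consequences of the naturality of the $\Hom$ bifunctor together with the commutativity of the triangles in the tower \eqref{motbirtower}. No genuine obstacle arises beyond citing the material already established in \S\ref{subsec.birattow}; the statement is effectively a bookkeeping assertion whose substance is carried by the existence of the tower itself.
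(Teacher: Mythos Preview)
Your proposal is correct and follows essentially the same approach as the paper. The paper's own proof is even terser: it cites \ref{thm.birspecseq} (which already packages the images of $\theta_{p\ast}^{K}$ as the abutment filtration of a spectral sequence) for the existence of the decreasing filtration, and then declares functoriality ``clear from the definition''; your argument simply unpacks both of these references explicitly using the tower \eqref{motbirtower}, which is exactly the content behind them.
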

\begin{proof}
The existence of the filtration follows directly from \ref{thm.birspecseq}.  The functoriality is clear from
the definition.
\end{proof}

\subsubsection{Finiteness of the filtration}

Our goal is to show that for $Y$ in $Sm_{X}$, the filtration $F^{\bullet}$ on $H^{p,q}(Y,R)$ defined in
\ref{thm.filt.motcohY} is concentrated
in the range $0\leq n\leq q$.

\begin{thm}  \label{thm.filt.motcoh.finite}
Let $p$, $q$ be arbitrary integers, and let $Y$ be in $Sm_{X}$.  Then the decreasing filtration
$F^{\bullet}$ on $H^{p,q}(Y,R)$ constructed in \ref{thm.filt.motcohY} satisfies the following properties:
	\begin{enumerate}
		\item \label{thm.filt.motcoh.finite.a}  $F^{0}H^{p,q}(Y,R)=H^{p,q}(Y,R)$,
		\item \label{thm.filt.motcoh.finite.b}  $F^{q+1}H^{p,q}(Y,R)=0$.
	\end{enumerate}
\end{thm}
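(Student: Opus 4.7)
The plan is to handle the two properties separately, each reducing to a structural result from earlier in the paper. For part \eqref{thm.filt.motcoh.finite.a}, I would invoke Proposition \ref{prop.hzgeq0birat}, which asserts that $\theta_n^{\hr} \colon wb^c_n(\hr) \to \hr$ is an isomorphism in $\stablehomotopy$ for every integer $n \geq 0$. Taking $n = 0$ shows that the induced map $\theta_{0\ast}^{\hr}$ is an isomorphism of Hom groups, and hence its image, which is by definition $F^0 H^{p,q}(Y,R)$, is all of $H^{p,q}(Y,R)$.

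For part \eqref{thm.filt.motcoh.finite.b}, my plan is to establish the stronger vanishing
\[ \Hom_{\stablehomotopy}(\susp{-p+q}(\Gm^{-q}\wedge Y_+),\, wb^c_{-(q+1)}\hr) = 0, \]
from which $F^{q+1}H^{p,q}(Y,R) = 0$ follows immediately. By construction $wb^c_{-(q+1)}\hr$ lies in $WB_{-(q+1)}^{\perp}$, which Theorem \ref{thm.ort=wbirationalTspectra} identifies with $\northogonal{-q}$. Definition \ref{def.orth} then reduces the vanishing to checking that the source object is $(-q)$-effective, namely that $\susp{-p+q}(\Gm^{-q}\wedge Y_+) \in \neffstablehomotopy{-q}$.

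This last step is bookkeeping against the generators in \eqref{eqn.Cqeff}. When $q > 0$, Remark \ref{desusp.sh} identifies $\Gm^{-q}\wedge Y_+$ with $F_q(S^q \wedge Y_+)$, which lies in $C^{-q}_{\mathit{eff}}$ via the parameters $n = q$, $r = q$, $s = 0$, $U = Y$ (giving $s-n = -q$). When $q \leq 0$, one has $\Gm^{-q}\wedge Y_+ = F_0(\Gm^{-q}\wedge Y_+) \in C^{-q}_{\mathit{eff}}$ directly (parameters $n = 0$, $s = -q$). Since $\neffstablehomotopy{-q}$ is a localizing, hence triangulated, subcategory of $\stablehomotopy$, it is stable under the shift $\susp{-p+q}$, so the required containment holds in all cases. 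The only mildly delicate point is this case split on the sign of $q$; everything else is a formal consequence of Proposition \ref{prop.hzgeq0birat}, Theorem \ref{thm.ort=wbirationalTspectra} and Definition \ref{def.orth}.
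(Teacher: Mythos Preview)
Your proof is correct and follows essentially the same route as the paper: part \eqref{thm.filt.motcoh.finite.a} via Proposition \ref{prop.hzgeq0birat}, and part \eqref{thm.filt.motcoh.finite.b} by noting that $\susp{-p+q}(\Gm^{-q}\wedge Y_+)\in \neffstablehomotopy{-q}$ while $wb^c_{-q-1}\hr \in WB_{-q-1}^{\perp}=\northogonal{-q}$, so the Hom group (and hence its image) vanishes. The only difference is that you spell out the sign-of-$q$ case split to verify membership in $C^{-q}_{\mathit{eff}}$, whereas the paper simply cites \eqref{eqn.Cqeff}.
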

\begin{proof}
By \ref{prop.hzgeq0birat} we deduce that $\theta ^{\hr}_{0}$ is an isomorphism in $\stablehomotopy$.
This proves the first claim.  For the second claim, we observe that $\susp{-p+q}(\Gm ^{-q}\wedge Y_{+})$
is in $\neffstablehomotopy{-q}$ (see \ref{eqn.Cqeff}).  Thus, it suffices to show that $wb^{c}_{-q-1}\hr$ belongs to
$\northogonal{-q}$ (see \ref{def.orth}), or equivalently to $WB_{-q-1}^{\perp}$
(see \ref{thm.ort=wbirationalTspectra}). This follows from
\ref{prop.adj-ort.b} and \ref{def.birat.cover}.
\end{proof}

\subsection{Auxiliary results}

\begin{lem}  \label{noneghzoper}
	Let $q>0$ be an arbitrary integer.  Then,  for every
	integer $p$: $\Hom _{\stablehomotopy}(\hr^{p,q},\hr)=0$.
\end{lem}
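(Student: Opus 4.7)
The plan is to show that $\hr^{p,q}$ lies in $\neffstablehomotopy{1}$ whenever $q>0$, and then invoke the fact that $\hr$ is $1$-orthogonal. First I would recall from Lemma \ref{orth.HZ} that $\hr$ belongs to $WB_{0}^{\perp}$, which by Theorem \ref{thm.ort=wbirationalTspectra} coincides with $\northogonal{1}$; by Definition \ref{def.orth} this means $\Hom_{\stablehomotopy}(K,\hr)=0$ for every $K \in \neffstablehomotopy{1}$. So it is enough to exhibit $\hr^{p,q}$ as an object of $\neffstablehomotopy{1}$.

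Next I would verify that $\hr^{p,q}=\susp{p-q}(\Gm^{q}\wedge \hr)$ belongs to $\neffstablehomotopy{q}$. By Lemma \ref{orth.HZ}, $\hr$ is already in $\stablehomotopyeff=\neffstablehomotopy{0}$. Inspecting the generating family $C^{n}_{\mathit{eff}}$ in \eqref{eqn.Cqeff}, the functor $\Gm \wedge -$ sends $F_{n}(S^{r}\wedge \Gm^{s}\wedge U_{+})$ to $F_{n}(S^{r}\wedge \Gm^{s+1}\wedge U_{+})$, hence shifts the effective index upward by one. Since $\neffstablehomotopy{q}=Loc(C^{q}_{\mathit{eff}})$ is a localizing subcategory, the triangulated functor $\Gm \wedge -$ carries $\neffstablehomotopy{n}$ into $\neffstablehomotopy{n+1}$; iterating $q$ times gives $\Gm^{q}\wedge \hr \in \neffstablehomotopy{q}$. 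As $\neffstablehomotopy{q}$ is closed under the suspension and desuspension functors (being triangulated), applying $\susp{p-q}$ keeps us inside $\neffstablehomotopy{q}$, so $\hr^{p,q}\in \neffstablehomotopy{q}$.

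Finally, since $q>0$ we have $q\geq 1$, and the inclusion $\neffstablehomotopy{q}\subseteq \neffstablehomotopy{1}$ places $\hr^{p,q}$ in $\neffstablehomotopy{1}$. Combined with the first paragraph, this yields $\Hom_{\stablehomotopy}(\hr^{p,q},\hr)=0$, as desired. The content is essentially bookkeeping, and the only mildly delicate point is the translation between the weakly birational orthogonality $WB_{0}^{\perp}$ and Voevodsky's notion $\northogonal{1}$; but this identification is exactly what \ref{thm.ort=wbirationalTspectra} provides, so no real obstacle arises.
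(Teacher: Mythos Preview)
Your proof is correct and follows essentially the same strategy as the paper's: both show $\hr^{p,q}\in \neffstablehomotopy{q}$ via \ref{orth.HZ} and then invoke the orthogonality of $\hr$ through \ref{thm.ort=wbirationalTspectra}. The only cosmetic difference is that you pass down to $\neffstablehomotopy{1}$ and use $\hr\in WB_{0}^{\perp}=\northogonal{1}$ directly from \ref{orth.HZ}, whereas the paper stays at level $q$ and shows $\hr\in WB_{q-1}^{\perp}$ via \ref{cor.comp.tool1} and \ref{prop.hzgeq0birat}; since those results themselves rest on \ref{orth.HZ} and the tower \eqref{eq.birtower}, the two arguments are equivalent.
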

\begin{proof}
By \ref{orth.HZ}, we conclude that $\hr^{p,q}$ is in $\neffstablehomotopy{q}$.  Combining \ref{thm.ort=wbirationalTspectra}
and \ref{def.orth}, we observe that it suffices
to show that $\hr$ is in $WB_{q-1}^{\perp}$.  But this follows from
\ref{cor.comp.tool1} and \ref{prop.hzgeq0birat}.
\end{proof}

\begin{lem}  \label{negoperbirat}
	Let $q\leq n$ be arbitrary integers.  Then for every integer p, the natural map; $\theta^{\hr}_{n}:wb^{c}_{n}\hr \rightarrow \hr$
	induces an isomorphism:
	\[	\Hom _{\stablehomotopy}(\hr^{p,q},wb^{c}_{n}\hr)\rightarrow \Hom _{\stablehomotopy}(\hr^{p,q}, \hr).
	\]
\end{lem}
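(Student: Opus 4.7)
The plan is to invoke the universal property of the counit $\theta_{n}^{\hr}$ established in \ref{prop.3.1.counit-properties}, which says that for any $F$ lying in $WB_{n}^{\perp}$, the induced map $\Hom_{\stablehomotopy}(F, wb^{c}_{n}\hr) \to \Hom_{\stablehomotopy}(F, \hr)$ is an isomorphism. So the whole task reduces to verifying that $\hr^{p,q}$ belongs to $WB_{n}^{\perp}$ whenever $q \leq n$.

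First I would recall from \ref{orth.HZ} that $\hr \in WB_{0}^{\perp}$. Then by \ref{lem.Gmshift.orth} (the shift compatibility $E \in WB_{0}^{\perp} \Leftrightarrow \Gm^{q}\wedge E \in WB_{q}^{\perp}$), we obtain $\Gm^{q}\wedge \hr \in WB_{q}^{\perp}$. Since $WB_{q}^{\perp}$ is a full triangulated subcategory of $\stablehomotopy$ (see \ref{cor.r-orth.loc}), it is closed under the suspension $\susp{p-q}$, so
\[ \hr^{p,q} = \susp{p-q}(\Gm^{q}\wedge \hr) \in WB_{q}^{\perp}. \]

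Next, I would invoke the tower \eqref{eq.birtower} of localizing subcategories, which gives the inclusion $WB_{q}^{\perp} \subseteq WB_{n}^{\perp}$ whenever $q \leq n$. This places $\hr^{p,q}$ in $WB_{n}^{\perp}$, and applying \ref{prop.3.1.counit-properties} with $F = \hr^{p,q}$ and $E = \hr$ yields the desired isomorphism. There is no real obstacle here; the only mild subtlety is ensuring that the shift conventions of \ref{lem.Gmshift.orth} and the triangulated structure of $WB_{q}^{\perp}$ are used in the right direction, but both are immediate from the cited statements.
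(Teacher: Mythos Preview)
Your proof is correct and follows essentially the same route as the paper: reduce via \ref{prop.3.1.counit-properties} to showing $\hr^{p,q}\in WB_{n}^{\perp}$, establish $\hr^{p,q}\in WB_{q}^{\perp}$ from \ref{orth.HZ} and \ref{lem.Gmshift.orth}, and conclude using the tower inclusion \eqref{eq.birtower}. The only difference is cosmetic---you spell out the closure of $WB_{q}^{\perp}$ under $\susp{p-q}$ explicitly, whereas the paper leaves this implicit.
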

\begin{proof}
By \ref{prop.3.1.counit-properties} and \ref{eq.birtower} it suffices to show that $\hr^{p,q}$ is in $WB_{q}^{\perp}$.
Then the result follows by combining \ref{orth.HZ} and \ref{lem.Gmshift.orth}.
\end{proof}

\begin{lem}  \label{operbiratvanish}
	Let $q\geq n+1$ be arbitrary integers.  Then, for every integer p:
	$\Hom _{\stablehomotopy}(\hr^{p,q},wb^{c}_{n}\hr)=0$.
\end{lem}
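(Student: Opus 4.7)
The plan is to use the identification $WB_n^{\perp} = \northogonal{n+1}$ established in \ref{thm.ort=wbirationalTspectra} and to show that $\hr^{p,q}$ lives in the effective subcategory $\neffstablehomotopy{n+1}$ so that the orthogonality defining $\northogonal{n+1}$ forces the Hom group to vanish.

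First, I would observe that by \ref{prop.adj-ort.b} and \ref{def.birat.cover}, $wb^c_n\hr = j_n p_n \hr$ belongs to $WB_n^{\perp}$. Applying \ref{thm.ort=wbirationalTspectra} (valid since the base is a perfect field) this means $wb^c_n\hr \in \northogonal{n+1}$. So by \ref{def.orth} it suffices to establish that $\hr^{p,q} \in \neffstablehomotopy{n+1}$ whenever $q \geq n+1$.

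Next I would check that $\hr^{p,q} \in \neffstablehomotopy{q}$. By \ref{orth.HZ}, $\hr$ lies in $\stablehomotopyeff = \neffstablehomotopy{0}$. Smashing with $\Gm^q$ sends $\neffstablehomotopy{0}$ into $\neffstablehomotopy{q}$ (compare the proof of \ref{lem.Gmshift.orth}, where the analogous shift for $\Gm^q$ on the effective categories is used), and then applying the suspension/desuspension $\susp{p-q}$ preserves $\neffstablehomotopy{q}$, because it is a localizing subcategory (see \ref{Vov.eff}) and hence closed under $\susp{\pm 1}$. Thus $\hr^{p,q} = \susp{p-q}(\Gm^q \wedge \hr) \in \neffstablehomotopy{q}$.

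Finally, from the inclusion of effective subcategories $\neffstablehomotopy{q} \subseteq \neffstablehomotopy{n+1}$ (which holds because $q \geq n+1$), we conclude that $\hr^{p,q} \in \neffstablehomotopy{n+1}$. Combining this with $wb^c_n\hr \in \northogonal{n+1}$ and the definition of $n$-orthogonality in \ref{def.orth}, we obtain $\Hom_{\stablehomotopy}(\hr^{p,q}, wb^c_n \hr) = 0$, as desired. I do not expect a serious obstacle here: the argument is a direct bookkeeping on the effective/orthogonal filtrations and the only subtle point is invoking the perfect-field hypothesis through \ref{thm.ort=wbirationalTspectra}, which is already part of the standing assumption in this section.
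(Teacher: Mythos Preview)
Your proof is correct and follows essentially the same route as the paper's: both use that $wb^{c}_{n}\hr \in WB_{n}^{\perp}$, identify this with an orthogonal category via \ref{thm.ort=wbirationalTspectra}, and reduce to showing $\hr^{p,q}\in \neffstablehomotopy{q}$ via \ref{orth.HZ}. The only cosmetic difference is that the paper passes the inclusion to the orthogonal side (using $WB_{n}^{\perp}\subseteq WB_{q-1}^{\perp}$ from \eqref{eq.birtower} and then $WB_{q-1}^{\perp}=\northogonal{q}$), whereas you pass it to the effective side (using $\neffstablehomotopy{q}\subseteq \neffstablehomotopy{n+1}$); these are equivalent. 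One small correction: the identification $WB_{n}^{\perp}=\northogonal{n+1}$ in \ref{thm.ort=wbirationalTspectra} does not require the perfect-field hypothesis (that is only needed for $B_{n}^{\perp}$), so your closing remark is overly cautious.
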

\begin{proof}
By construction $wb^{c}_{n}\hr$ is in $WB_{n}^{\perp}$ (see \ref{prop.adj-ort.b} and \ref{def.birat.cover}),
and therefore also in $WB_{q-1}^{\perp}$ (see \ref{eq.birtower}).  
Combining \ref{thm.ort=wbirationalTspectra}
and \ref{def.orth}, we conclude that it suffices
to show that $\hr^{p,q}$ is in $\neffstablehomotopy{q}$.  This follows from
\ref{orth.HZ}.
\end{proof}

\begin{cor}		\label{cor.negcoversnotiso}
Let $n<0$ be an arbitrary integer.  Then
the natural map $\theta ^{\hr}_{n}:wb^{c}_{n}\hr \rightarrow \hr$ is not an isomorphism in $\stablehomotopy$.
\end{cor}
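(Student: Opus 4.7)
The plan is to derive a contradiction from the assumption that $\theta^{\hr}_n$ is an isomorphism in $\stablehomotopy$ for some $n<0$. If such an isomorphism existed, then applying the functor $\Hom_{\stablehomotopy}(\hr,-)$ would produce an induced isomorphism of abelian groups
\[ \theta^{\hr}_{n\ast}:\Hom_{\stablehomotopy}(\hr, wb^{c}_{n}\hr)\longrightarrow \Hom_{\stablehomotopy}(\hr,\hr), \]
and the right-hand side is non-zero because it contains the identity endomorphism of $\hr$.

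The key step is then to show that the left-hand side vanishes, which I would do by invoking \ref{operbiratvanish}. Since $n<0$, we have $n+1\le 0$, so the hypothesis $q\ge n+1$ of that lemma is satisfied at $q=0$; taking also $p=0$ yields $\Hom_{\stablehomotopy}(\hr^{0,0},wb^{c}_{n}\hr)=0$. Unwinding the bigraded notation $\hr^{0,0}=\susp{0}(\Gm^{0}\wedge \hr)=\hr$, this gives $\Hom_{\stablehomotopy}(\hr, wb^{c}_{n}\hr)=0$, contradicting the previous paragraph.

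There is no substantial obstacle: the entire argument rests on combining \ref{operbiratvanish} at bi-degree $(0,0)$ with the trivial fact that $\mathrm{id}_{\hr}$ is a non-zero endomorphism. As an alternative route one could instead appeal to \ref{cor.comp.tool1}: an isomorphism $\theta^{\hr}_n$ would force $\hr$ to lie in $WB_{n}^{\perp}$, hence in $WB_{-1}^{\perp}=\northogonal{0}$ by \ref{eq.birtower} and \ref{thm.ort=wbirationalTspectra}; but \ref{orth.HZ} places $\hr$ in $\stablehomotopyeff$, so orthogonality would again yield $\Hom_{\stablehomotopy}(\hr,\hr)=0$, the same contradiction.
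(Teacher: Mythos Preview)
Your proof is correct and follows essentially the same approach as the paper: both invoke \ref{operbiratvanish} at $p=q=0$ (using $n<0$ so that $0\ge n+1$) to get $\Hom_{\stablehomotopy}(\hr, wb^{c}_{n}\hr)=0$, and then observe that this is incompatible with $\theta^{\hr}_{n}$ being an isomorphism since $\mathrm{id}_{\hr}\neq 0$. The paper phrases the contradiction as ``the identity on $\hr$ does not factor through $\theta^{\hr}_{n}$'', which is the same argument.
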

\begin{proof}
Taking $p=q=0$ in
\ref{operbiratvanish} we deduce that the identity map on $\hr$ does not factor through $\theta ^{\hr}_{n}$.
Hence we conclude that $\theta ^{\hr}_{n}$ is not an isomorphism in $\stablehomotopy$.
\end{proof}

Consider the distinguished triangle in $\stablehomotopy$ (see \ref{thm.3.1.slicefiltration}):
	\[  \xymatrix{wb^{c}_{n}\hr \ar[r]& wb^{c}_{n+1}\hr \ar[r]^-{\pi_{n+1}}& 
							 wb_{n+1/n}\hr \ar[r]^-{\sigma_{n+1}}& \susp{1}wb^{c}_{n}\hr}
	\]
	
\begin{lem}  \label{lem.specseqdegs}
	Let $q$ be an arbitrary integer.  Then,  the natural map $\pi _{q}$
	induces an isomorphism:
	\[	\Hom _{\stablehomotopy}(\hr^{p,q},wb^{c}_{q}\hr)\rightarrow \Hom _{\stablehomotopy}(\hr^{p,q}, wb_{q/(q-1)}\hr).
	\]
\end{lem}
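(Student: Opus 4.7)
The plan is to extract the desired isomorphism from the long exact sequence associated to the distinguished triangle
\[  wb^{c}_{q-1}\hr \rightarrow wb^{c}_{q}\hr \xrightarrow{\pi_{q}} wb_{q/(q-1)}\hr \rightarrow \susp{1}wb^{c}_{q-1}\hr \]
of \ref{thm.3.1.slicefiltration}, after applying the cohomological functor $\Hom_{\stablehomotopy}(\hr^{p,q}, -)$. The induced map $\pi_{q\ast}$ will be an isomorphism provided the groups on either side vanish, namely
\[	\Hom_{\stablehomotopy}(\hr^{p,q}, wb^{c}_{q-1}\hr)=0 \quad \text{and} \quad \Hom_{\stablehomotopy}(\hr^{p,q}, \susp{1}wb^{c}_{q-1}\hr)=0. \]

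For the first vanishing, I would invoke \ref{operbiratvanish} directly with $n=q-1$ (so that the hypothesis $q\geq n+1$ is satisfied), which yields $\Hom_{\stablehomotopy}(\hr^{p,q}, wb^{c}_{q-1}\hr)=0$.

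For the second, I would rewrite the target by adjointness of the suspension and desuspension in the triangulated category $\stablehomotopy$: using that $\susp{-1}\hr^{p,q}\cong \hr^{p-1,q}$ (since $\susp{-1}$ shifts the first index by $-1$ and leaves the weight unchanged), the group
\[  \Hom_{\stablehomotopy}(\hr^{p,q}, \susp{1}wb^{c}_{q-1}\hr) \cong \Hom_{\stablehomotopy}(\hr^{p-1,q}, wb^{c}_{q-1}\hr) \]
again vanishes by \ref{operbiratvanish} applied to the pair $(p-1,q)$ with $n=q-1$. Combining the two vanishings with the long exact sequence yields the desired isomorphism. There is no real obstacle here: everything is a direct formal consequence of \ref{operbiratvanish} and the standard behavior of $\Hom$ on the distinguished triangle produced by \ref{thm.3.1.slicefiltration}.
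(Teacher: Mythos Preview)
Your proof is correct and essentially identical to the paper's: both use the long exact sequence from the distinguished triangle of \ref{thm.3.1.slicefiltration} and reduce to the two vanishings $\Hom_{\stablehomotopy}(\hr^{p,q}, wb^{c}_{q-1}\hr)=0$ and $\Hom_{\stablehomotopy}(\hr^{p,q}, \susp{1}wb^{c}_{q-1}\hr)\cong \Hom_{\stablehomotopy}(\hr^{p-1,q}, wb^{c}_{q-1}\hr)=0$, each obtained from \ref{operbiratvanish}.
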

\begin{proof}
Since $\stablehomotopy$ is a triangulated category, it suffices to show that 
\[ \Hom _{\stablehomotopy}(\hr^{p,q}, wb^{c}_{q-1}\hr)=\Hom _{\stablehomotopy}(\hr^{p,q}, \susp{1}wb^{c}_{q-1}\hr)=0.
\]
On the one hand, by \ref{operbiratvanish} we conclude that $\Hom _{\stablehomotopy}(\hr^{p,q}, wb^{c}_{q-1}\hr)=0$.
On the other hand, by adjointness and \ref{operbiratvanish} we conclude that:
\[ \Hom _{\stablehomotopy}(\hr^{p,q}, \susp{1}wb^{c}_{q-1}\hr)\cong \Hom _{\stablehomotopy}(\hr^{p-1,q}, wb^{c}_{q-1}\hr)=0.
\]
This finishes the proof.
\end{proof}



\section*{Acknowledgements}
	The author would like to thank warmly Chuck Weibel for his interest in this work, as well as for
	all the advice and support during all these years.


\bibliography{biblio_locSH}
\bibliographystyle{abbrv}

\end{document}